\newtheorem{theorem}{Theorem}[section]
\newtheorem{lemma}[theorem]{Lemma}
\newtheorem{proposition}[theorem]{Proposition}
\newtheorem{corollary}[theorem]{Corollary}
 \newtheorem{definition}[theorem]{Definition}
\newtheorem{example}[theorem]{Example}
\newtheorem{remark}[theorem]{Remark}
\newtheorem{conjecture}[theorem]{Conjecture}
\begin{document}

\title{Envelopes of positive metrics with prescribed singularities} \author{Julius
  Ross and David Witt Nystr\"om}

\begin{abstract}
We investigate envelopes of positive metrics with a prescribed singularity type.  First we generalise work of Berman to this setting, proving $C^{1,1}$ regularity of such envelopes, showing their Monge-Amp\`ere measure is supported on a certain ``equilibrium set'' and connecting with the asymptotics of the partial Bergman functions coming from multiplier ideals.  We investigate how these envelopes behave on certain products, and how they relate to the Legendre transform of a test curve of singularity types in the context of geodesic rays in the space of K\"ahler potentials.    Finally we consider the associated exhaustion function of these equilibrium sets, connecting it both to the Legendre transform and to the geometry of the Okounkov body. \medskip

\end{abstract}
\maketitle

\newcommand{\PSH}{PSH} 
\newcommand{\Sing}{\operatorname{Sing}}
\newcommand{\vol}{\operatorname{vol}}
\newcommand{\Conv}{\operatorname{Conv}}
\newcommand{\ord}{\operatorname{ord}}
\newcommand{\im}{\operatorname{im}}

\setlength{\parskip}{2pt}

\section{Introduction}

In this paper we study a number of features of envelopes of positive metrics with prescribed singularities.    The setting we shall consider consists of a compact complex manifold $X$ with a locally bounded metric $\phi$ on a line bundle $L$ and a positive singular metric $\psi$ on an auxiliary line bundle $F$.  The \emph{maximal envelope} of this data is defined to be
\[\phi_{[\psi]} = \sup \{ \gamma\in \PSH(L) : \gamma\le \phi \text{ and } \gamma 
\le \psi + O(1) \}^*\]
where the notation means the upper semicontinuous regularisation of the supremum of all positive metrics $\gamma$ on $L$ that are bounded by $\phi$ and have the same singularity type as $\psi$.   This maximal envelope is itself a positive metric on $L$ which, as the notation suggests, depends on $\phi$ and the singularity type of $\psi$.   

Before turning to precise statements, we begin with an overview of the contents of this paper which starts with some general statements about these envelopes, and then moves on to a number of applications and special cases.  The main technical result is Theorem \ref{thm:regularityintroduction} in which we prove, under suitable hypothesis, that $\phi_{[\psi]}$ is $C^{1,1}$ away from an obvious singular locus.  The reader may choose on first reading to take this statement as given, at which point the applications to the Monge-Amp\`ere measures (Theorem \ref{thm:extremalmeasures}) and to the partial Bergman function (Theorems \ref{thm:partialbergman} and \ref{thm:weakmeasureconvergence}) follow rather easily.  The proof of these statements form the largest part of the paper, and take up Sections 2 through to Section 4.  

Following this, in Section \ref{sec:products} we consider the case of maximal envelopes on products, which we show, at least in the algebraic case, is related to the Musta\c{t}\u{a} summation formula for multiplier ideals.  This is independent of the above technical results, and relies only on the statement in Theorem \ref{thm:partialbergman} concerning the partial Bergman function.    

In Section \ref{sec:legendre} we show how a previous construction of the authors \cite{RossNystrom} of geodesics in the space of K\"ahler potentials has an interpretation as a maximal envelope (on a product).  This too is independent of the above technical results, but the real interest is that they give, as a corollary, certain regularity of these geodesic rays (Theorem \ref{thm:reggeo}).  This topic is continued in Section \ref{gradientmaps} in which this regularity allows us to interpret the time derivative of this associated geodesic as a certain ``exhaustion function" that appears naturally from the definition of maximal envelopes.  Finally we consider in Section \ref{sec:divisorial} a very special case of this construction, essentially given by a singularity type along a divisor, and show how this exhaustion map gives a natural candidate for the ``first cooordinate" of a kind of moment-map from a polarised manifold to its Okounkov body.

\subsection{Regularity, Monge Amp\`ere measures and Partial Bergman Kernels} Our first set of results generalise work of Berman \cite{Berman} to this setting, and for convenience we collect the precise statements here.  In the following $\psi\in \PSH(F)$ will assumed to be exponentially H\"older continuous (which holds, for instance, if $\psi$ has analytic singularities).   

\begin{theorem}\label{thm:regularityintroduction}
  If $\phi$ is Lipschitz (resp.\ in the class $C^{1,1}$) over $X$ then the same is true of $\phi_{[\psi]}$ over $X-\mathbb B_+(L-F)\cup \Sing(\psi)$.
\end{theorem}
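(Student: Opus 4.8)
The plan is to adapt Berman's argument for the absolute envelope $P\phi$ to the relative setting, the three new features being the global constraint $\gamma\le\psi+O(1)$, the limited regularity of $\psi$ away from $\Sing(\psi)$, and the need to stay on the locus where $L-F$ is positive. Since the conclusion is local, I would fix $x_0\in X\setminus(\mathbb{B}_+(L-F)\cup\Sing(\psi))$, choose a small coordinate ball $B\ni x_0$ with $\overline B$ disjoint from $\mathbb{B}_+(L-F)\cup\Sing(\psi)$, trivialise $L$ and $F$ over $B$ so that $\phi,\psi,\phi_{[\psi]}$ become local functions with $\phi\in C^{1,1}$ (resp.\ Lipschitz) and $\psi$ exponentially Hölder and bounded on $\overline B$, and reduce to an a priori bound on $\phi_{[\psi]}$ in $C^{1,1}(B')$ (resp.\ $C^{0,1}(B')$) on a slightly smaller ball $B'$. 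Since $x_0\notin\mathbb{B}_+(L-F)$, after shrinking $B$ we may also assume $L-F$ carries a smooth strictly positive metric near $\overline B$; this furnishes a reference Kähler form and, through sections of high powers of $L-F$ twisted by $\psi$, a supply of competitors realising $\phi_{[\psi]}$ near $x_0$, and it is the only place this hypothesis is used.

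One inequality is soft: $\phi_{[\psi]}\le\phi$ and $\phi_{[\psi]}\in\PSH(L)$ give a lower bound for the complex Hessian, hence $\Delta\phi_{[\psi]}\ge -C$ on $B'$, and $\phi_{[\psi]}$ is bounded on $\overline{B'}$. The whole difficulty is the matching upper bound on $\Delta\phi_{[\psi]}$; granting the two-sided Laplacian bound, the $C^{1,1}$ statement follows as in Berman's treatment, while the Lipschitz statement is obtained separately by a difference-quotient argument — here exponential Hölder continuity of $\psi$ is precisely what guarantees that, away from $\Sing\psi$, a translate of a competitor decreased by $O(|v|)$ is again a competitor, i.e.\ that the global singularity constraint is stable under small translations with linear loss.

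For the upper Laplacian bound I would follow Berman and regularise. First replace $\psi$ by a Demailly approximation $\psi_\varepsilon\searrow\psi$ with analytic singularities, so that $\psi_\varepsilon$ is smooth near $\overline B$; then for the data $(\phi,\psi_\varepsilon)$ introduce a family $u_{\beta}=u_{\beta,\varepsilon}$ solving a complex Monge--Amp\`ere equation $(dd^c u_\beta)^n=e^{\beta(u_\beta-\phi)}\,\mu$ truncated by the $\psi_\varepsilon$-constraint (equivalently a mixed equation whose solutions increase to $\phi_{[\psi_\varepsilon]}$ as $\beta\to\infty$, after which $\phi_{[\psi_\varepsilon]}\searrow\phi_{[\psi]}$ as $\varepsilon\to0$). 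As in Berman, differentiating the equation twice produces a term $\beta\operatorname{tr}_\omega(\omega+dd^c u_\beta)$ of favourable sign which, fed into the Aubin--Yau-type Laplacian inequality for $\omega+dd^c u_\beta$, dominates the bad terms $\Delta_\omega\phi$ and $\Delta_\omega\psi_\varepsilon$. The first is harmless, being bounded pointwise on $B$ by the $C^{1,1}$ hypothesis on $\phi$. The second — the heart of the matter, and where the ``$O(1)$'' nature of the constraint is essential — I would control via the structural fact that on $X\setminus(\mathbb{B}_+(L-F)\cup\Sing\psi)$ the envelope either coincides with $\phi$ or is locally maximal: on $\overline B$, where $\psi_\varepsilon$ is uniformly bounded, ``$\le\psi_\varepsilon+O(1)$'' imposes nothing locally beyond ``$\le\phi$'', so the truncated family can be arranged so that $\Delta_\omega\psi_\varepsilon$ never enters the interior estimate on $B'$ with a bad sign — on the contact set $\{u_\beta=\phi\}\cap B'$ the upper Hessian bound comes from the barrier $\phi$ touching from above, and on $\{u_\beta<\phi\}\cap B'$ the limiting solution is maximal, $(dd^c\phi_{[\psi]})^n=0$, so its Laplacian is controlled by the boundary data on $\partial B$ together with the positivity of the reference form coming from $L-F$. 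The resulting bound depends only on $\|\phi\|_{C^{1,1}}$, $\sup_B|\psi|$, the positive metric on $L-F$ near $\overline B$, and the geometry of $B$, hence is uniform in $\beta$ and $\varepsilon$; letting $\beta\to\infty$ and then $\varepsilon\to0$ and using semicontinuity of the Hessian under decreasing limits gives the bound for $\phi_{[\psi]}$ itself.

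The main obstacle is exactly making this rigorous: one must first prove the dichotomy that on $X\setminus(\mathbb{B}_+(L-F)\cup\Sing\psi)$ the envelope is locally maximal wherever it is strictly below $\phi$, and then set up the truncated Monge--Amp\`ere family so that this fact is used literally rather than only morally, handling with care the free boundary between the contact and non-contact regions where both the $\phi$- and $\psi_\varepsilon$-obstacles could in principle interact. The Lipschitz case carries its own, smaller, obstacle in verifying the translation-stability of the singularity constraint through exponential Hölder continuity.
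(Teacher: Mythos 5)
Your route (Berman's $\beta$-regularised Monge--Amp\`ere family plus Demailly approximation of $\psi$) is genuinely different from the paper's, which follows Bedford--Taylor: it passes to the total space $Y=L^*$, uses Ohsawa--Takegoshi to build global holomorphic vector fields on $Y$ that vanish to prescribed high order along the zero section, the divisor $E$ from a Kodaira decomposition of $L-F$, and $\Sing(\psi)$, and then perturbs the envelopes $\tau_C=P_{\psi'+C}\phi$ by the flows of these fields, uniformly in $C$, to get first and second difference-quotient bounds. But as sketched your argument has genuine gaps. The most serious is the treatment of the non-contact region: from $(dd^c\phi_{[\psi]})^n=0$ on $\{\phi_{[\psi]}<\phi\}$ you claim the Laplacian there is ``controlled by the boundary data on $\partial B$ together with the positivity of the reference form''. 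That is false (or circular): a maximal psh function on a ball with bounded, even smooth, boundary values admits no a priori interior Laplacian bound --- bounding $\Delta\phi_{[\psi]}$ off the contact set is precisely the content of the theorem, not a consequence of maximality. Relatedly, your key simplification --- that on $\overline B$ the constraint ``$\le\psi_\varepsilon+O(1)$ imposes nothing locally beyond $\le\phi$'', so $\Delta\psi_\varepsilon$ can be kept out of the estimate --- ignores that both the envelope and the $\beta$-family solve \emph{global} problems: in a maximum-principle argument on $X$ the term $\Delta\psi_\varepsilon$ (which blows up near $\Sing\psi$ as $\varepsilon\to0$) and the constant $C$ in $\psi+C$ enter wherever the $\psi$-obstacle is active, and nothing in your setup prevents the maximum of the relevant test quantity from sitting there. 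The paper's decay estimates on the vector fields (vanishing to order $m$ along $E$ and $\Sing(\psi)$, Lemma \ref{lem:existencevectorfields}) are exactly the mechanism that insulates the estimate from this region, and all bounds are made uniform in $C$; your sketch has no substitute for either.

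Two further points. First, you use $\phi_{[\psi_\varepsilon]}\searrow\phi_{[\psi]}$ as $\varepsilon\to0$; since $[\psi_\varepsilon]$ is a strictly weaker singularity type than $[\psi]$, this continuity of the envelope in the singularity type is not automatic and would itself need proof --- the paper avoids any approximation of $\psi$, working only with the exhaustion $P_{\psi'+C}\phi$ in $C$. Second, for the Lipschitz statement you invoke ``a translate of a competitor decreased by $O(|v|)$'': on a compact $X$ there are no translations, and a locally translated competitor is not a global metric in $\PSH(L)$ (gluing it back would already require the Lipschitz control you are trying to prove). This is precisely why the paper moves to $Y=L^*$, where Ohsawa--Takegoshi supplies global holomorphic flows to play the role of translations, combined with the homogenisation and extension lemma (Lemma \ref{lem:38berman}) to turn the perturbed envelope back into a competitor. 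To make your approach work you would need, at a minimum, a correct localisation of the $\beta$-estimate that quantifies the influence of the obstacle region uniformly in $\beta$, $C$ (or $\varepsilon$), and a genuine replacement for the translation step.
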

Here $\mathbb B_+(L-F)$ denotes the augmented base locus of $L-F$, and $\Sing(\psi)$ is the locus on which $\psi$ is not locally bounded.   We may as well assume that $L-F$ is big since otherwise $\mathbb B_+(L-F)=X$ and the statement becomes vacuous.   This result is in some sense optimal, since even in the case without the singular metric there are examples of maximal envelopes that are no more than $C^{1,1}$. \medskip

Assume now that $\phi$ is in fact $C^2$, denote by $U$ the set on which $\phi_{[\psi]}$ is locally bounded and set $X(0) =\{ x: dd^c \phi_x>0\}$.  The \emph{equilibrium measure} is defined as
\[\mu(\phi,\psi) := \mathbf{1}_{U} MA(\phi_{[\psi]}) = \frac{1}{n!}\mathbf 1_{U} dd^c (\phi_{[\psi]})^n\]
where ${\mathbf 1}_U$ is the characteristic function of $U$, and the \emph{equilibrium set} is
\[D=D(\phi,\psi) = \{x\in X : \phi_{[\psi]}(x)=\phi(x)\}.\]

\begin{theorem}\label{thm:extremalmeasures}
Assume $L-F$ is big.  Then there is an equality of measures
\[ \mu(\phi,\psi) = \mathbf 1_{X-\mathbb B_+(L-F)\cup \Sing(\psi)} MA(\phi_{[\psi]}) = \mathbf 1_{D} MA(\phi) = \mathbf 1_{D\cap X(0)} MA(\phi).\]
\end{theorem}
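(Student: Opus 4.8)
The plan is to prove the chain of equalities from left to right. Set $\Omega := X-\mathbb B_+(L-F)\cup\Sing(\psi)$ and $u:=\phi_{[\psi]}$. By Theorem~\ref{thm:regularityintroduction} (applicable since $\phi\in C^2\subset C^{1,1}$) the envelope $u$ is of class $C^{1,1}$ on $\Omega$; in particular $u$ is locally bounded there, so $\Omega\subseteq U$. For the first equality, note that on $U$ the measure $MA(u)$ is the Bedford--Taylor Monge--Amp\`ere mass of a locally bounded plurisubharmonic metric and hence puts no mass on pluripolar sets. Since $L-F$ is big, $\mathbb B_+(L-F)$ is a proper algebraic subvariety, and $\Sing(\psi)=\{\psi=-\infty\}$ is pluripolar because $\psi$ is a genuine positive singular metric; thus $\mathbf 1_U MA(u)$ charges neither, giving $\mu(\phi,\psi)=\mathbf 1_U MA(u)=\mathbf 1_\Omega MA(u)$.

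The heart of the argument is the identity $\mathbf 1_\Omega MA(u)=\mathbf 1_D MA(\phi)$, which I would split into showing $MA(u)=0$ on $\Omega\setminus D$ and $dd^c u=dd^c\phi$ at a.e.\ point of $\Omega\cap D$. For the first, fix $x_0\in\Omega$ with $u(x_0)<\phi(x_0)$; since $u\in C^{1,1}(\Omega)$ and $\phi\in C^2$ there is a coordinate ball $B\Subset\Omega$ around $x_0$ and $\delta>0$ with $u\le\phi-\delta$ on $B$. If $MA(u)$ were not identically zero on $B$ then $u$ would fail to be maximal on some ball $B'\Subset B$; solving the homogeneous Dirichlet problem on $B'$ with boundary data $u$, taking a small convex combination with $u$ to make the perturbation pointwise smaller than $\delta$, and gluing back to $u$ outside $B'$, one obtains a positive metric $w\in\PSH(L)$ with $w\ge u$, $w>u$ somewhere, $w\le u+\delta$ everywhere (hence $w\le\psi+O(1)$, the singularity bound surviving since the perturbation is globally bounded and confined away from $\Sing(\psi)$), and $w\le\phi$ (since the perturbation is confined to $B'\subset B$, where $u\le\phi-\delta$). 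Then $w$ is an admissible competitor in the definition of $\phi_{[\psi]}$, so $w\le\phi_{[\psi]}=u$, a contradiction. Hence $MA(u)\equiv 0$ on $\Omega\setminus D$ and $\mathbf 1_\Omega MA(u)=\mathbf 1_{\Omega\cap D}MA(u)$.

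For the comparison on $\Omega\cap D$, consider $g:=\phi-u\in C^{1,1}(\Omega)$, which is non-negative and vanishes exactly on $\Omega\cap D$. Almost every point $x_0$ of $\Omega\cap D$ is simultaneously a Lebesgue density point of $D$ and a point where $g$ is twice differentiable (the latter by Rademacher applied to the Lipschitz gradient $\nabla g$), and at such $x_0$ the function $g$ attains an interior minimum, so $\nabla g(x_0)=0$ and the second-order expansion is $g(x_0+h)=\frac{1}{2}\langle A h,h\rangle+o(|h|^2)$ with $A=D^2g(x_0)\ge 0$. If $A$ had a positive eigenvalue then $g(x_0+h)\ge c|h|^2$ on a full cone of directions $h$, contradicting that $x_0$ is a density point of $\{g=0\}$; hence $A=0$, i.e.\ $dd^c u(x_0)=dd^c\phi(x_0)$ and $MA(u)(x_0)=MA(\phi)(x_0)$. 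Therefore $\mathbf 1_{\Omega\cap D}MA(u)=\mathbf 1_{\Omega\cap D}MA(\phi)$, and since $MA(\phi)$ is absolutely continuous ($\phi\in C^2$) while $D\setminus\Omega\subseteq\mathbb B_+(L-F)\cup\Sing(\psi)$ has Lebesgue measure zero, this equals $\mathbf 1_D MA(\phi)$. This also yields the last equality: at a.e.\ point of $D$ one has $dd^c\phi=dd^c u\ge 0$, so $MA(\phi)$ agrees with $\frac{1}{n!}(dd^c\phi)^n$ there and vanishes wherever $dd^c\phi$ is degenerate, whence $\mathbf 1_D MA(\phi)=\mathbf 1_{D\cap X(0)}MA(\phi)$.

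I expect the main obstacle to be the local maximality step: one must verify that the competitor produced by balayage respects \emph{both} constraints at once, which is why it is essential to work on a ball where $u$ stays strictly below $\phi$ (so a small upward perturbation still satisfies $w\le\phi$) and to note that perturbing $u$ by a globally bounded amount, away from $\Sing(\psi)$, cannot affect the singularity bound $w\le\psi+O(1)$. The density-point computation in the third paragraph is classical but is the technical core of identifying $\mu(\phi,\psi)$ with $\mathbf 1_D MA(\phi)$.
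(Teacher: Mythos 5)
Your overall architecture is the same as the paper's (which in turn defers most details to Berman): the first equality via the fact that the Bedford--Taylor mass of a locally bounded positive metric charges no pluripolar set, the vanishing of $MA(\phi_{[\psi]})$ off the contact set by a balayage/maximality argument, the a.e.\ second-order agreement of $\phi$ and $\phi_{[\psi]}$ at density points of $D$ (this is exactly the content of the paper's lemma that $\partial^2(\phi-\phi')/\partial z\partial\bar z=0$ a.e.\ on $\{\phi=\phi'\}$ for two $C^{1,1}$ metrics), and the last equality from $dd^c\phi\ge 0$ a.e.\ on $D$ (your density-point derivation of this replaces the paper's lemma $D(\phi,\psi)\subset\{dd^c\phi\ge 0\}$, which is obtained there from $D(\phi,\psi)\subset D(\phi,\phi)$ and Berman's result). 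Those parts are fine, granting Theorem \ref{thm:regularity}.

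There is, however, one genuine gap, in the maximality step. To make your glued competitor admissible for the envelope you argue $w\le u+\delta$ ``hence $w\le\psi+O(1)$''; this presupposes $\phi_{[\psi]}\le\psi+O(1)$, i.e.\ that the maximal envelope has the same singularity type as $\psi$. The paper explicitly flags this as \emph{not} obvious (see the final remark of Section \ref{sec:envelopes}): it is known for analytic singularities, but the theorem only assumes exponential H\"older continuity, and $\phi_{[\psi]}=(\sup_C P_{\psi'+C}\phi)^*$ is an increasing limit, so no single bound $\le\psi'+O(1)$ is inherited. Confining the perturbation away from $\Sing(\psi)$ does not help: admissibility only needs to be checked near $\Sing(\psi)$, where $w=u$, so you are back to the unproved claim about $u$ itself. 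The repair stays inside your scheme and is exactly the paper's stated device of working with $\tau_C=P_{\psi'+C}\phi$ uniformly in $C$: on a ball $B\Subset\Omega\cap\{u<\phi-\delta\}$ the metric $\psi'$ is bounded below and $\tau_C\le\phi$ is bounded above, so for all large $C$ the constraint $\psi'+C$ is slack on $B$; running your balayage on $\tau_C$ with perturbation smaller than $\min(\delta,1)$ produces a competitor for $P_{\psi'+C}\phi$ and yields $MA(\tau_C)=0$ on $B$. Since the $\tau_C$ increase to $\phi_{[\psi]}$ (up to regularisation) and are locally uniformly bounded on $\Omega$ by \eqref{eq:boundonphi+}, Bedford--Taylor convergence for increasing sequences gives $MA(\phi_{[\psi]})=0$ on $B$, after which the rest of your argument goes through unchanged.
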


We remark that this theorem justifies the terminology, since $MA(\phi_{[\psi]})=0$ away from $D$ and thus $\phi_{[\psi]}$ is (locally) maximal there.\bigskip

Turning to algebraic data, denote the multiplier ideal sheaf of $k\psi$ by $\mathcal I(k\psi)$, and define the \emph{partial Bergman function} of $\phi$ and $\psi$ to be the smooth function on $X$ given by
\[B_k(\phi,\psi) = \sum_{\alpha} |s_{\alpha}|_{\phi}^2,\]
where $\{s_{\alpha}\}$ is any basis for $H^0(\mathcal I(k\psi) \otimes L^k)$ that is orthonormal with respect to the $L^2$-norm induced by $\phi$ and some fixed smooth volume form $dV$. 

\begin{theorem}\label{thm:partialbergman}
There is a limit
    \[ k^{-1} \ln B_k(\phi,\psi) \to \phi_{[\psi]}-\phi\] 
as $k$ tends to infinity that holds uniformly on compact subsets of $X-\mathbb B_+(L-F) \cup \Sing(\psi)$.

  More precisely, for each such compact set $K$ there is a $C_K>0$ such that for all $k$
    \[ C_K^{-1} e^{-k (\phi-\phi_{[\psi]})} \le B_k(\phi,\psi) \le C_K
    k^n e^{-k (\phi-\phi_{[\psi]})}. \] 
over $K$.
\end{theorem}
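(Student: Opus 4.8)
The plan is to prove the two displayed estimates separately, using the extremal description of the partial Bergman function: for each $x$,
\[
B_k(\phi,\psi)(x)=\sup\Bigl\{\,|s(x)|_\phi^2\ :\ s\in H^0\bigl(\mathcal I(k\psi)\otimes L^k\bigr),\ \int_X|s|_\phi^2\,dV\le 1\,\Bigr\}.
\]
Granting this, it is enough to prove pointwise upper and lower bounds on the right hand side that are uniform for $x$ in a fixed compact $K\subset X-\mathbb B_+(L-F)\cup\Sing(\psi)$, since the asserted uniform convergence then follows from the two bounds because $k^{-1}\ln(k^n)\to0$.

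\emph{Upper bound.} Fixing a competitor $s$, I would apply the sub-mean value inequality to the plurisubharmonic function $|s|^2$ on a ball of radius $k^{-1/2}$ about $x$ — after subtracting from $\phi$ a pluriharmonic function agreeing with it to first order at $x$, which since $\phi\in C^2$ costs only a factor $e^{O(1)}$ with $O(1)$ uniform over $X$ — to get $|s(x)|_\phi^2\le C\,k^n$ on $X$ with $C$ depending only on $\phi$. Hence $u_s:=k^{-1}\ln|s|_\phi^2+\phi$ is a positive metric on $L$ with $u_s\le\phi+k^{-1}\ln(Ck^n)$. The extra information carried by $s\in H^0(\mathcal I(k\psi)\otimes L^k)$ is that $u_s$ is forced to be almost as singular as $\psi$: on a log resolution $\pi$ of the (analytic) singularities of $\psi$, writing $\pi^*s=(\prod_i z_i^{m_i})h$ with $h$ holomorphic, the multiplier ideal condition bounds each $m_i$ below by the multiplicity of $\pi^*\psi$ along $E_i$ up to an additive constant involving the discrepancy; bounding $h$ by the maximum principle over a fixed polydisc, using the uniform bound $|s|_\phi^2\le Ck^n$ just obtained, then gives $u_s\le(1-\varepsilon_k)\psi+C_0$ globally, with $\varepsilon_k=O(1/k)$ and $C_0$ independent of $k$ and $s$ (the general exponentially H\"older case reduces to this one by approximation). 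Feeding this back, $u_s-k^{-1}\ln(Ck^n)$ is a competitor in the definition of $\phi_{[(1-\varepsilon_k)\psi]}$, hence is bounded by it; and since $\phi_{[(1-\varepsilon)\psi]}\to\phi_{[\psi]}$ as $\varepsilon\to0$, uniformly on $K$ and at a rate controlled by $\varepsilon\sup_K|\psi|$, one gets $u_s\le\phi_{[\psi]}+k^{-1}\ln(Ck^n)+O(\varepsilon_k\sup_K|\psi|)$ on $K$. As $k\varepsilon_k=O(1)$, exponentiating gives $|s(x)|_\phi^2\le C_K k^n e^{k(\phi_{[\psi]}(x)-\phi(x))}$, which is the claimed bound.

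\emph{Lower bound.} Fix $x\in K$. Since $L-F$ is big and $x\notin\mathbb B_+(L-F)$, there is a positive singular metric on $L-F$ with curvature $\ge\varepsilon\omega$ for a fixed K\"ahler form $\omega$ that is locally bounded near $x$; adding $\psi$ to it (and rescaling) gives a positive singular metric $\rho$ on $L$, locally bounded near $x$ (as $x\notin\Sing\psi$), with $dd^c\rho\ge\varepsilon\omega$ and $\rho\le\psi+O(1)$. I would then work with the weight $\varphi_k:=(k-1)\phi_{[\psi]}+\rho$ on $L^k$: it has $dd^c\varphi_k\ge\varepsilon\omega$, $\varphi_k\le k\phi+O(1)$ (from $\phi_{[\psi]}\le\phi$) and $\varphi_k\le k\psi+O(k)$, and by Theorem~\ref{thm:regularityintroduction} its dominant part $\phi_{[\psi]}$ is of class $C^{1,1}$ near $x$. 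A peaked-section construction as in \cite{Berman} — solving a $\bar\partial$-equation against $\varphi_k$, augmented by a logarithmic pole at $x$ so that the correction vanishes there, on a ball of fixed radius about $x$, where the $C^{1,1}$-regularity is used to osculate $e^{\varphi_k}$ holomorphically and the uniform curvature lower bound $\varepsilon\omega$ gives a uniform bound on the correction — produces $s\in H^0(L^k)$ with $|s(x)|^2 e^{-\varphi_k(x)}=1$ and $\int_X|s|^2 e^{-\varphi_k}\,dV$ bounded by a constant independent of $k$ and of $x\in K$. Because $\varphi_k\le k\psi+O(k)$ (so that $\int|s|^2e^{-k\psi}\,dV$ is finite locally) this section lies in $H^0(\mathcal I(k\psi)\otimes L^k)$; exchanging $e^{-\varphi_k}$ for $e^{-k\phi}$, a bounded exchange since $\varphi_k-k\phi\le O(1)$, bounds $\|s\|_{L^2,\phi}^2$ uniformly, while $|s(x)|^2 e^{-k\phi(x)}=e^{(k-1)\phi_{[\psi]}(x)+\rho(x)-k\phi(x)}$; dividing and using that $\rho(x)-\phi_{[\psi]}(x)$ is bounded on $K$ yields $B_k(\phi,\psi)(x)\ge C_K^{-1}e^{k(\phi_{[\psi]}(x)-\phi(x))}$.

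The hard part will be the uniformity in the lower bound — arranging the peaked-section construction so that the peak value, the $L^2$-norm of the section, and the $\bar\partial$-correction are all controlled by constants depending only on $K$. The key devices are to use $\phi_{[\psi]}$ itself, rather than $\phi$, as the reference weight, so that the section automatically inherits the singularity type $[\psi]$ (hence lies in $\mathcal I(k\psi)$) and the comparison weights satisfy $\varphi_k\le k\phi+O(1)$ with $O(1)$ uniform, and to exploit the $C^{1,1}$-regularity of $\phi_{[\psi]}$ from Theorem~\ref{thm:regularityintroduction}, which is exactly what makes the holomorphic osculation of $e^{\varphi_k}$ — and hence Berman's scaling argument — work uniformly in $k$. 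A secondary difficulty is the singularity bookkeeping near $\Sing(\psi)$ on the upper-bound side together with the reduction of the general exponentially H\"older case to analytic singularities.
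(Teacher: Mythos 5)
Your overall strategy (extremal characterisation; upper bound by showing a normalised section is a candidate for the envelope; lower bound by an $L^2$/extension argument with a weight built from the envelope plus a strictly positive piece coming from bigness of $L-F$) is the same as the paper's, but your lower bound has a genuine gap at exactly the step you call your ``key device''. You take the weight $\varphi_k=(k-1)\phi_{[\psi]}+\rho$ and claim $\varphi_k\le k\psi+O(k)$, so that the extended section automatically lies in $H^0(\mathcal I(k\psi)\otimes L^k)$. This requires $\phi_{[\psi]}\le\psi+O(1)$, i.e.\ that the upper semicontinuous regularisation of the increasing limit $\lim_{C\to\infty}P_{\psi'+C}\phi$ still has singularity type at least $[\psi]$. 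That is precisely the fact the paper flags (Remark at the end of Section~\ref{sec:envelopes}) as \emph{not} obvious: it holds for analytic singularities via a resolution (cf.\ \cite{Rashkovskii}), but the theorem is stated for exponentially H\"older continuous $\psi$, and nothing you or the paper prove gives it in that generality. The paper's proof is arranged specifically to avoid this: it works with the pre-regularised envelopes $\tau_C=P_{\psi'+C}\phi$ (which satisfy $\tau_C\le\psi'+C$ by definition) in the weight $\sigma_{k,C}=k'\phi_+ +(k-k')\tau_C+k'\psi$ of Proposition~\ref{prop:modify}, gets bounds uniform in $C$, and only lets $C\to\infty$ at the end using continuity of $B_k$. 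Your argument can be repaired by the same substitution, but as written the multiplier-ideal membership of your peak section is unjustified. A related remark: the paper's lower bound here uses the Ohsawa--Takegoshi extension property (Theorem~\ref{thm:ohsawa}) with uniform constants, so the $C^{1,1}$ regularity and the delicate uniform osculation you worry about are not needed for this theorem at all (they enter only in the measure statement, Lemma~\ref{lemma:lowerbergman}); your H\"ormander-with-log-pole route can be made to work but is where your ``hard part'' lives, and it is avoidable.

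On the upper bound your argument is essentially the paper's Proposition~\ref{proposition:bergmanboundtame}, but with two soft spots. First, the reduction of the exponentially H\"older case to analytic singularities ``by approximation'' is not substantiated; the paper instead proves directly (Lemma~\ref{prop:exponentialimpliestame}, via Demailly's local Bergman estimate) that exponential H\"older continuity gives the tameness bound $k^{-1}\ln|s|^2\le(1-c/k)\psi+O(1)$ with the $O(1)$ uniform in $k$, which is exactly what your log-resolution step is meant to deliver. Second, your claim that $\phi_{[(1-\varepsilon)\psi]}\le\phi_{[\psi]}+O(\varepsilon\sup_K|\psi|)$ on $K$ needs an argument: given a candidate $\gamma$ for $\phi_{[(1-\varepsilon)\psi]}$, add $\varepsilon(\psi-\phi_F)$ (negative up to a constant) to make it a candidate for $\phi_{[\psi]}$ and absorb the correction on $K$; this is the same move the paper makes directly by adding $ck^{-1}\psi$ to the normalised section, and once stated it closes that step.
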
 

\begin{theorem}\label{thm:weakmeasureconvergence}
Suppose $L-F$ is big.   Then there is a pointwise limit 
\[ \lim_{k\to \infty} k^{-n} B_k(\phi,\psi) dV = \mathbf 1_{D(\phi,\psi)\cap X(0)} MA(\phi)\]
almost everywhere on $X(0)$.  Moreover \[\lim_{k\to \infty} k^{-n} B_k(\phi,\psi)dV \to \mu(\phi,\psi)\] weakly in the sense of measures.
\end{theorem}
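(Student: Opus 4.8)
The plan is to combine the two-sided estimate of Theorem \ref{thm:partialbergman} with the measure-theoretic identity of Theorem \ref{thm:extremalmeasures}, using a Bedford–Taylor type convergence argument for the Monge–Ampère operator. First I would fix a compact set $K$ inside the open set $X-\mathbb B_+(L-F)\cup\Sing(\psi)$ on which $\phi_{[\psi]}$ is bounded; by Theorem \ref{thm:partialbergman} we have $k^{-1}\ln B_k(\phi,\psi)\to \phi_{[\psi]}-\phi$ uniformly there, and this function is continuous (indeed locally $C^{1,1}$) by Theorem \ref{thm:regularityintroduction}. The key analytic input is that, after the rescaling, $u_k:=\phi+k^{-1}\ln B_k(\phi,\psi)$ is a sequence of positive metrics on $L$ converging uniformly to $\phi_{[\psi]}$ on $K$, so by continuity of the Monge–Ampère operator along uniformly convergent sequences of bounded psh functions one gets $MA(u_k)\to MA(\phi_{[\psi]})$ weakly on the interior of $K$. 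The second step identifies $k^{-n}B_k\,dV$ with something comparable to $MA(u_k)$: expanding $dd^c u_k = dd^c\phi + k^{-1}dd^c\ln B_k$ and taking the $n$-th power, the top-order term in $k^{-1}$ is $k^{-n}(dd^c\tfrac12\ln B_k)^n$, and a direct computation (essentially the one Berman uses, via the submean value property of $B_k$ and the fact that $\ln B_k$ is a maximum of logarithms of moduli of holomorphic sections) shows $k^{-n}(dd^c\tfrac12\ln B_k)^n$ is asymptotically $\tfrac{1}{n!}k^{-n}$ times a measure with density comparable to $B_k$. This is where the pointwise $B_k\le C_K k^n e^{-k(\phi-\phi_{[\psi]})}$ and lower bounds from Theorem \ref{thm:partialbergman} feed in to pin down the constant and give the stated pointwise almost-everywhere limit on $X(0)$.

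More concretely, for the pointwise statement I would argue as follows. On $X(0)$ we have $dd^c\phi>0$, so locally $MA(\phi)$ is a smooth strictly positive volume form, comparable to $dV$. Where $x\in D(\phi,\psi)\cap X(0)$, the equality $\phi_{[\psi]}(x)=\phi(x)$ together with the sandwich of Theorem \ref{thm:partialbergman} forces $k^{-1}\ln B_k(\phi,\psi)(x)$ to converge to $0$ at a controlled (at most logarithmic in $k$) rate, and a Bergman-kernel expansion at such points of contact — where $\phi_{[\psi]}$ touches the smooth metric $\phi$ from below — yields $k^{-n}B_k(\phi,\psi)(x)\,dV\to \tfrac{1}{n!}\bigl(dd^c\phi\bigr)^n_x = MA(\phi)_x$. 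Where $x\in X(0)\setminus D$, Theorem \ref{thm:extremalmeasures} (or rather the Remark following it) tells us $MA(\phi_{[\psi]})=0$ near $x$, so $\phi_{[\psi]}-\phi$ is a nonconstant maximal function there and the exponential decay factor $e^{-k(\phi-\phi_{[\psi]})}$ kills $k^{-n}B_k$; this gives the zero limit a.e. off $D$. Combining the two regions yields the pointwise a.e.\ statement $\lim_k k^{-n}B_k\,dV = \mathbf 1_{D(\phi,\psi)\cap X(0)}MA(\phi)$ on $X(0)$.

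For the weak convergence statement I would first note that the total masses match up asymptotically: $\int_X k^{-n}B_k\,dV = k^{-n}\dim H^0(\mathcal I(k\psi)\otimes L^k)$, which by asymptotic Riemann–Roch for multiplier ideals (or by the volume-of-Okounkov-body interpretation) tends to $\tfrac{1}{n!}\vol(L-F)$-type quantity equal to $\int_X \mu(\phi,\psi)$, using Theorem \ref{thm:extremalmeasures} to evaluate the right-hand side. Then, given the pointwise a.e.\ convergence on $X(0)$ established above and the fact that $MA(\phi)$ and hence $\mu(\phi,\psi)$ gives no mass to $X\setminus X(0)$ (the equilibrium measure is supported on $D\cap X(0)$), an application of the dominated convergence theorem — with a dominating function coming from the uniform-on-compacta upper bound $k^{-n}B_k\le C_K e^{-k(\phi-\phi_{[\psi]})}\le C_K$ away from $\mathbb B_+(L-F)\cup\Sing(\psi)$, a set of Lebesgue measure zero — upgrades pointwise convergence to convergence of integrals against any continuous test function, i.e.\ weak convergence of measures.

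The main obstacle I anticipate is making the passage from $k^{-n}B_k\,dV$ to $MA(\phi_{[\psi]})$ genuinely rigorous near the boundary behaviour: one must control the Bergman function uniformly enough near $\partial U$ and near the ``bad set'' $\mathbb B_+(L-F)\cup\Sing(\psi)$ to justify the dominated convergence step and to ensure no mass escapes to that set in the weak limit. Establishing that the bad set is Lebesgue-null (clear, since it is contained in a proper analytic subvariety union the pole set of $\psi$, which has measure zero when $\psi$ is exponentially Hölder) handles part of this, but the uniform integrability of the family $\{k^{-n}B_k\}$ — ruling out concentration of mass on small neighbourhoods of that set — is the delicate point and will likely require the sharp upper bound of Theorem \ref{thm:partialbergman} together with a careful covering argument and the submean-value inequality for $B_k$.
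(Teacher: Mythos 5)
There is a genuine gap at the heart of your argument: the pointwise lower bound $\liminf_k k^{-n}B_k(\phi,\psi)\,dV \ge MA(\phi)$ almost everywhere on $D(\phi,\psi)\cap X(0)$ is never actually established. Neither of your two routes supplies it. First, uniform convergence of $u_k=\phi+k^{-1}\ln B_k(\phi,\psi)$ to $\phi_{[\psi]}$ plus Bedford--Taylor continuity only gives weak convergence of $MA(u_k)$ to $MA(\phi_{[\psi]})$; there is no identity or ``direct computation'' making $k^{-n}(dd^c\tfrac12\ln B_k)^n$ comparable to $k^{-n}B_k\,dV$ --- these are genuinely different measures, and relating them is precisely what has to be proved, not an input. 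Second, the two-sided estimate of Theorem \ref{thm:partialbergman} is too coarse to ``pin down the constant'': on $D$ it only gives $C_K^{-1}\le B_k\le C_K k^n$, which controls $k^{-1}\ln B_k$ but says nothing about the limit of $k^{-n}B_k$ itself, let alone that this limit equals the density of $MA(\phi)$. The appeal to ``a Bergman-kernel expansion at points of contact'' is likewise not available off the shelf: the envelope is only $C^{1,1}$, the kernel is the partial one cut out by the multiplier-ideal filtration, and the existence of such an asymptotic at points of $D$ is exactly the content of the theorem. The paper proves this lower bound (Lemma \ref{lemma:lowerbergman}) by constructing peaked smooth sections concentrated near $x\in D\cap X(0)$ and correcting them to holomorphic ones via H\"ormander $\bar\partial$-estimates taken with respect to the auxiliary metrics $\sigma_{k,C}$ of Proposition \ref{prop:modify} (built from Ohsawa--Takegoshi), with constants uniform in $C$; this yields the sharp constant $MA(\phi)_x$ and --- a point your sketch never addresses --- guarantees that the corrected sections have finite norm against $k(\psi'+C)$ and hence genuinely lie in $H^0(\mathcal I(k\psi)\otimes L^k)$, so that they contribute to the \emph{partial} Bergman function at all.

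Two further remarks. Your treatment of the zero limit off $D$ (exponential decay from the upper bound of Proposition \ref{proposition:bergmanboundtame}, with $\Sing(\psi)$ of measure zero) does match the paper. But the ``delicate point'' you anticipate for the weak convergence is not delicate: $k^{-n}B_k(\phi,\psi)\le k^{-n}B_k(\phi)\le C$ globally on $X$ by the local holomorphic Morse inequalities, so dominated convergence applies directly on all of $X$; no uniform-integrability or covering argument near $\mathbb B_+(L-F)\cup\Sing(\psi)$, and no total-mass matching via Riemann--Roch for multiplier ideals, is needed.
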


\subsection{Maximal Envelopes on Products}

Following these technical results we turn to maximal envelopes on products.  Suppose that we have two sets of data of the above kind, given by $(X_i,L_i,F_i,\phi_i,\psi_i)$  for $i=1,2$ where $L_i$ is a line bundle on a compact complex manifold $X_i$, $\phi_i$ a smooth metric on $L_i$ and $\psi_i$ a positive singular metric on $F_i$.  For simplicity assume $L_i-F_i$ is ample and $\psi_i$ has algebraic singularities for $i=1,2$.

\begin{theorem}
Consider the product metric $\phi=\phi_1 + \phi_2$ on $L_1\otimes L_2$ (where we suppress the pullback notation), and let
\[\psi = \sup \{ \psi_1, \psi_2\}.\]
Then
  \[ \phi_{[\psi]} = \sup\{ (\phi_1)_{[\lambda\psi_1]} + (\phi_2)_{[(1-\lambda)\psi_2]} : \lambda\in (0,1) \}^*.\]
\end{theorem}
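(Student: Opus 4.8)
The proof splits into the two inequalities. For ``$\ge$'' I argue directly. Fix $\lambda\in(0,1)$. Suppressing pullbacks, $(\phi_1)_{[\lambda\psi_1]}+(\phi_2)_{[(1-\lambda)\psi_2]}$ is a positive metric on $L_1\otimes L_2$ that is bounded above by $\phi_1+\phi_2=\phi$; and since $L_i-F_i$ is ample one has $(\phi_1)_{[\lambda\psi_1]}\le\lambda\psi_1+O(1)$ and $(\phi_2)_{[(1-\lambda)\psi_2]}\le(1-\lambda)\psi_2+O(1)$, so their sum is bounded above by $\lambda\psi_1+(1-\lambda)\psi_2+O(1)\le\sup\{\psi_1,\psi_2\}+O(1)=\psi+O(1)$, using that a convex combination is dominated by the maximum. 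Thus each such sum is admissible in the definition of $\phi_{[\psi]}$ and so lies below it; taking the supremum over $\lambda$ and the upper semicontinuous regularisation, and using that $\phi_{[\psi]}$ is already upper semicontinuous, gives $\sup_\lambda\{\cdots\}^*\le\phi_{[\psi]}$.

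The reverse inequality is the substance, and I would obtain it from the partial Bergman asymptotics of Theorem~\ref{thm:partialbergman}. The starting point is the multiplier ideal identity on $X_1\times X_2$, with $\boxtimes$ the external (box) product of ideal sheaves,
\[ \mathcal I(k\psi)\;=\;\sum_{\lambda\in[0,1]}\mathcal I(k\lambda\psi_1)\boxtimes\mathcal I(k(1-\lambda)\psi_2). \]
The inclusion $\supseteq$ is elementary (again $\psi=\max\{\psi_1,\psi_2\}\ge\lambda\psi_1+(1-\lambda)\psi_2$, combined with the product formula $\mathcal I(\lambda\psi_1+(1-\lambda)\psi_2)=\mathcal I(\lambda\psi_1)\boxtimes\mathcal I((1-\lambda)\psi_2)$ for pullbacks with algebraic singularities); the inclusion $\subseteq$ is the crucial point, which I would deduce from the monomial/Newton polyhedron description of multiplier ideals after passing to a log resolution on each factor, using that $\psi$ has algebraic singularities cut out (up to bounded error) by $\pi_1^*\mathfrak a_1+\pi_2^*\mathfrak a_2$. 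Because the jumping numbers of the $\psi_i$ are rational with uniformly bounded denominators, the sum may be restricted to a finite set $\Lambda_{N_k}=\{j/N_k:0\le j\le N_k\}$ with $N_k=O(k)$, and then the K\"unneth formula gives, with $A^k_\lambda:=H^0(\mathcal I(k\lambda\psi_1)\otimes L_1^k)$ and $B^k_\lambda:=H^0(\mathcal I(k(1-\lambda)\psi_2)\otimes L_2^k)$, the equality $H^0(\mathcal I(k\psi)\otimes L^k)=\sum_{\lambda\in\Lambda_{N_k}}A^k_\lambda\otimes B^k_\lambda$.

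Now equip $L=L_1\otimes L_2$ with $\phi=\phi_1+\phi_2$ and take $dV=dV_1\times dV_2$, so that $B_k(\phi,\psi)$ is the reproducing kernel on the diagonal of the space just described. Since $A^k_\lambda$ is decreasing and $B^k_\lambda$ increasing in $\lambda$, successively taking orthogonal complements within the two factors yields the \emph{orthogonal} decomposition $H^0(\mathcal I(k\psi)\otimes L^k)=\bigoplus_{\mu\le\lambda}(A^k_\lambda)^\circ\otimes(B^k_\mu)^\circ$, whence writing $K_V$ for the reproducing kernel of a subspace $V$,
\[ B_k(\phi,\psi)(x_1,x_2)=\sum_{\mu\le\lambda}K_{(A^k_\lambda)^\circ}(x_1)K_{(B^k_\mu)^\circ}(x_2)\le\sum_{\lambda\in\Lambda_{N_k}}K_{A^k_\lambda}(x_1)K_{B^k_\lambda}(x_2) \]
and hence $B_k(\phi,\psi)(x_1,x_2)\le(N_k+1)\sup_{\lambda\in\Lambda_{N_k}}B_k(\phi_1,\lambda\psi_1)(x_1)B_k(\phi_2,(1-\lambda)\psi_2)(x_2)$. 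Taking $k^{-1}\log$, using $k^{-1}\log(N_k+1)\to0$, and invoking Theorem~\ref{thm:partialbergman} on each factor — uniformly in $\lambda$ over compact subintervals of $(0,1)$, and for $\lambda$ near $0,1$ comparing with a fixed interior value via the monotonicity $B_k(\phi_i,\lambda\psi_i)\le B_k(\phi_i,\lambda'\psi_i)$ for $\lambda'\le\lambda$ — gives, on the dense open set $X_1\times X_2-\Sing(\psi_1)\times\Sing(\psi_2)$ (note $\mathbb B_+(L-F)=\emptyset$ since $L-F$ is ample on the product), the bound $\phi_{[\psi]}-\phi\le\sup_{\lambda\in[0,1]}\big(((\phi_1)_{[\lambda\psi_1]}-\phi_1)+((\phi_2)_{[(1-\lambda)\psi_2]}-\phi_2)\big)$. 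As $\lambda\mapsto(\phi_i)_{[\lambda\psi_i]}$ is concave, the supremum over $[0,1]$ equals that over $(0,1)$, giving $\phi_{[\psi]}\le\sup_{\lambda\in(0,1)}\{(\phi_1)_{[\lambda\psi_1]}+(\phi_2)_{[(1-\lambda)\psi_2]}\}$ there; both sides being positive metrics on $L$ whose difference vanishes off a set of measure zero, the inequality holds everywhere, and then passing to the regularisation completes the proof.

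The main obstacle I expect is twofold. First, the sharp form of the algebraic identity, namely $\mathcal I(k\psi)\subseteq\sum_{\lambda}\mathcal I(k\lambda\psi_1)\boxtimes\mathcal I(k(1-\lambda)\psi_2)$ with index set only $\Lambda_{N_k}$, $N_k=O(k)$: this fails for the coarser set $\{j/k:0\le j\le k\}$, so the reduction to a finite sum with $k^{-1}\log N_k\to0$ must be set up with care using the (bounded) denominators of the jumping numbers. Second, one needs Theorem~\ref{thm:partialbergman} with constants uniform in the auxiliary parameter $\lambda$, which is what allows the control of $\sup_{\lambda\in\Lambda_{N_k}}$ in the displayed estimate; this uniformity should follow by combining the monotonicity in $\lambda$ with the concavity (hence equicontinuity on compacta) of $\lambda\mapsto(\phi_i)_{[\lambda\psi_i]}$, but it is the point requiring the most attention.
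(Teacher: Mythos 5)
Your proposal is correct and follows essentially the same route as the paper: the easy inequality by exhibiting each sum $(\phi_1)_{[\lambda\psi_1]}+(\phi_2)_{[(1-\lambda)\psi_2]}$ as a candidate for the product envelope, and the hard inequality by combining the partial Bergman asymptotics of Theorem \ref{thm:partialbergman} on the product with the K\"unneth decomposition, the Musta\c{t}\u{a} summation formula, and a uniform-in-$\lambda$ upper bound for the factor partial Bergman kernels (which the paper obtains from tameness via Proposition \ref{proposition:bergmanboundtame}). The only executional differences are that you bound the whole product kernel through an orthogonal decomposition and finish with an almost-everywhere comparison of plurisubharmonic metrics, while the paper decomposes a single near-extremal section in compatible orthonormal bases and argues pointwise; the two points you flag (finiteness of the index set, where any polynomial bound such as $h^0(L_i^k)$ suffices since $k^{-1}\log$ kills it, and uniformity in $\lambda$, which follows from algebraicity/tameness of $\psi_i$) are treated at the same level of detail in the paper.
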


This result resembles known formulae for the Siciak extremal function \cite{Bedford4, Blocki, Sickiak} and for the pluricomplex Green function \cite{Rashkovskii} on products.     The particular proof we give uses the connection with partial Bergman functions and gives an interesting interplay between this circle of ideas and the the Musta\c{t}\u{a} summation formula for multiplier ideals.  We do not suggest that the previous Theorem is optimal, and discuss conjectural generalisations in Section \ref{sec:products}.  However rather than pursing this we move on to consider other aspects of maximal envelopes that can be thought of as a special case in which $X_2$ is the unit disc in $\mathbb C$ and $\psi_2$ has a logarithmic singularity at the origin.

\subsection{The Legendre Transform as a Maximal Envelope}
In previous work of the authors maximal envelopes were used to construct solutions to a Dirichlet problem for the complex Homogeneous Monge-Amp\`ere Equation (HMAE).  The general idea was to start with a concave ``test curve'' $\psi_{\lambda}$ for $\lambda\in (0,c)$ of singular metrics and consider  the Legendre transform 
\[\widehat{\phi}_t : = \sup_{\lambda} \{ \phi_{[\psi_\lambda]} + \lambda t\}^* \quad \text{for }t\in \mathbb R.\]
Letting $w$ be the standard coordinate on the closed unit disc $B\subset \mathbb C$ and changing variables $t=-\ln |w|^2$ we consider $\Phi(z,w):= \widehat{\phi_t}(z)$ as an $S^1$-invariant metric over the product $X\times B$.    In \cite{RossNystrom} it is proved, under some mild assumptions on $\psi_{\lambda}$, that $\widehat{\phi_t}$ is a weak geodesic in the space of positive metrics on $L$ emanating from $\phi$.  That is, $\Phi$ is a positive metric on $\pi^*L$ where $\pi\colon X\times B\to X$ is the projection, that satisfies $MA(\Phi) =0$ over $X\times B$ and $\Phi|_{\partial B} = \phi$.  

 Here we show how the Legendre transform can itself be considered as a maximal envelope over $X\times B$.    Let
 \begin{equation}
\psi' = \sup_{\lambda} \{ \psi_{\lambda}  + \lambda t\}^*.\label{eq:psi'}
\end{equation}

\begin{theorem}
Set $\phi' = \phi + ct$.    Then the Legendre transform of $\psi_{\lambda}$ is given by
\[ \widehat{\phi}_t =  \phi'_{[\psi']}\]
over $X\times B$. 
\end{theorem}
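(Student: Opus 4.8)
The plan is to check the two inequalities defining $\phi'_{[\psi']}$, using throughout that $\phi'$ and $\psi'$ are $S^1$-invariant, so that $\phi'_{[\psi']}$ is too, and that after the substitution $t=-\ln|w|^2\in[0,\infty]$ an $S^1$-invariant positive metric on $X\times B$ becomes a function on $X\times[0,\infty]$ that is convex in $t$ and lies in $\PSH(L)$ for each fixed $t$. The Legendre transform in the variable $t$ is the bridge between the Legendre description and the envelope description, and the heart of the matter is that it respects the defining conditions of the envelope.

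For the inequality $\widehat{\phi}_t\le\phi'_{[\psi']}$ I would show that $\widehat{\phi}_t$ is an admissible competitor. It is a positive metric on $\pi^*L$ over $X\times B$ by the weak-geodesic statement recalled from \cite{RossNystrom}. The bound $\widehat{\phi}_t\le\phi'$ is immediate: $\phi_{[\psi_\lambda]}\le\phi$ and, as $t\ge0$ on $B$ and $\lambda<c$, we have $\lambda t\le ct$, so $\phi_{[\psi_\lambda]}+\lambda t\le\phi+ct=\phi'$. The remaining condition $\widehat{\phi}_t\le\psi'+O(1)$ is the real point: from $\phi_{[\psi_\lambda]}\le\psi_\lambda+C_\lambda$ one gets $\widehat{\phi}_t\le\sup_\lambda\{\psi_\lambda+C_\lambda+\lambda t\}$, which is $\le\psi'+O(1)$ as soon as the singularity-comparison constants $C_\lambda$ are controlled --- uniformly, or at worst affinely --- for $\lambda\in(0,c)$. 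Establishing this control is the main obstacle, and it is where the standing hypotheses on the test curve (those that make $\widehat{\phi}_t$ a weak geodesic ray) are used; in particular one must understand the behaviour as $\lambda\to c$, where $\phi_{[\psi_\lambda]}$ and its pole degenerate together.

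For the reverse inequality it is enough, after replacing a competitor $\gamma$ by its $S^1$-symmetrisation $\bigl(\sup_{\theta}\gamma(z,e^{i\theta}w)\bigr)^*$ (again a competitor, as $\phi'$ and $\psi'$ are $S^1$-invariant), to bound an arbitrary $S^1$-invariant competitor $u(z,t)$, which is convex in $t$, lies in $\PSH(L)$ for each $t$, and satisfies $u\le\phi+ct$ and $u\le\psi'+C$ for some constant $C$. Apply the inverse Legendre transform: set $u_\lambda(z):=\inf_{t\ge0}\{u(z,t)-\lambda t\}$ for $\lambda\in(0,c)$. By Kiselman's minimum principle $u_\lambda\in\PSH(L)$. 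Taking $t=0$ in the infimum and using $u|_{t=0}\le\phi'|_{\partial B}=\phi$ gives $u_\lambda\le\phi$. Since $\psi'(z,\cdot)$ is the Legendre transform of the concave curve $\lambda\mapsto\psi_\lambda(z)$, whose slopes range over $[0,c]$, the minimising $t$ for $u(z,\cdot)-\lambda t$ lies in $[0,\infty)$ for $\lambda\in(0,c)$, so Legendre biduality applied to $u\le\psi'+C$ yields $u_\lambda\le\psi_\lambda+C$. Thus $u_\lambda$ is admissible for $\phi_{[\psi_\lambda]}$ and $u_\lambda\le\phi_{[\psi_\lambda]}$. Finally, Legendre biduality expresses $u(z,t)$ as a supremum of affine functions $u_\xi(z)+\xi t$ over the slopes $\xi$ of $u(z,\cdot)$, all of them $\le c$ since $u\le\phi+ct$; those with $\xi\le0$ contribute at most $u(z,0)\le\phi=\widehat{\phi}_0\le\widehat{\phi}_t$, and those with $\xi\in(0,c]$ at most $\phi_{[\psi_\xi]}+\xi t\le\widehat{\phi}_t$. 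Hence $u\le\widehat{\phi}_t$, and taking the supremum over competitors and then the upper-semicontinuous regularisation (harmless, $\widehat{\phi}_t$ being upper semicontinuous) gives $\phi'_{[\psi']}\le\widehat{\phi}_t$.

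Beyond the uniform control of the constants $C_\lambda$, the points requiring care are: the plurisubharmonicity of $u_\lambda$ across $\Sing(\psi_{0^+})$ (where both sides take the value $-\infty$) and the fact that it is not identically $-\infty$ off that set; the compatibility of the various upper-semicontinuous regularisations entering the definitions of $\psi'$, $\widehat{\phi}_t$ and the envelope; and the endpoint analysis of the half-line Legendre transform, at $t=0$ (matching $\phi$ on $\partial B$, where the geodesic ray starts) and at $t=\infty$ (where $\widehat{\phi}_t$ and $\psi'$ grow at rate $c$, matching the pole of $\phi'=\phi+ct$ along $w=0$).
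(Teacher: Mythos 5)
Your argument for the inequality $\phi'_{[\psi']}\le\widehat{\phi}_t$ is essentially the paper's: symmetrise, apply the Kiselman minimum principle to the inverse Legendre transform $u_\lambda=\inf_{t\ge0}\{u-\lambda t\}$, check $u_\lambda\le\phi$ and $u_\lambda\le\psi_\lambda+C$, and reconstruct by biduality (the paper does this with $u=P_{\psi'+C}\phi'$ rather than an arbitrary competitor, which is the same thing). The problem is the other inequality. You propose to exhibit $\widehat{\phi}_t$ itself as a competitor, which forces you to bound $\widehat{\phi}_t\le\psi'+O(1)$ via $\phi_{[\psi_\lambda]}\le\psi_\lambda+C_\lambda$ with the constants $C_\lambda$ controlled uniformly in $\lambda\in(0,c)$. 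You correctly identify this as the main obstacle, but you do not close it, and it is a genuine gap: it is not even known in general that the maximal envelope $\phi_{[\psi_\lambda]}$ has the same singularity type as $\psi_\lambda$ (the paper remarks explicitly that this is not obvious, and is only established when $\psi$ has analytic singularities), and the theorem here makes no such hypothesis on the test curve; on top of that, even granting comparability for each fixed $\lambda$, nothing in your sketch produces uniformity as $\lambda\to c$.

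The difficulty disappears if you argue candidate by candidate instead of through the envelope, which is exactly what the paper does and why it works throughout with the sub-envelopes $P_{\psi_\lambda+C}\phi$ uniformly in $C$. If $\gamma\in\PSH(L)$ satisfies $\gamma\le\phi$ and $\gamma\le\psi_\lambda+C$ for some fixed $\lambda\in(0,c)$ and constant $C$, then $\gamma+\lambda t\le\phi+ct=\phi'$ (since $t\ge0$) and $\gamma+\lambda t\le\psi_\lambda+\lambda t+C\le\psi'+C$ with the \emph{same} constant $C$, so $\gamma+\lambda t\le P_{\psi'+C}\phi'\le\phi'_{[\psi']}$. Taking the supremum over such $\gamma$, letting $C\to\infty$, taking the upper semicontinuous regularisation, and finally the supremum over $\lambda$ gives $\widehat{\phi}_t\le\phi'_{[\psi']}$ with no uniformity in $\lambda$ and no singularity-type statement about $\phi_{[\psi_\lambda]}$ ever being needed. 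You should replace the first half of your argument with this; the rest of your proposal then stands.
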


As is well known, an important aspect of the study of Dirichlet problems for the HMAE equation is finding solutions with good regularity properties (see, for example \cite{Guedjbook} for an introduction). We see from what has been said thus far that solutions coming from the Legendre transform construction have as much regularity as the associated maximal envelope.  

\begin{definition}
  We say a test curve is \emph{exponentially H\"older continuous} if the singulairty $\psi'$ is exponentially H\"older continuous on $X\times B$.
\end{definition}

\begin{theorem}\label{thm:reggeo}
  Let $\psi_{\lambda}$ be an exponentially H\"older continuous.  Then for each fixed finite $t\in \mathbb R$ the associated weak geodesic $\widehat{\phi}_t$ is $C^{1,1}$ as a function on $X$, and moreover is locally Lipschitz in the variable $t$.
\end{theorem}

This gives a regularity result for a (reasonably large) class of weak geodesic rays.  We expect this to be suboptimal, and that in fact $\widehat{\phi}_t$ is also $C^{1,1}$ it the variable $t$ (see Remark \ref{rmk:regularity}).  This is very much in the spirit of the regularity result of Phong-Sturm  \cite{Phongregularity} concerning weak geodesics associated to test configurations that will be discussed again below (one observes that, when it applies, the above is neither weaker or stronger than what is proved there).   Certainly if $\psi_{\lambda}$ is the test-curve coming from the degeneration to the normal cone of a divisor in $X$ then it is exponentially H\"older-continuous, and it seems likely that holds for any test curve coming from a test-configuration, but we have not attempted to prove this.

\subsection{Exhaustion functions}  Our final use for maximal envelopes is through the associated exhaustion functions of the equilibrium sets.  Fix a singular metric $\psi\in \PSH(F)$ and consider $H\colon X\to \mathbb R$ given by
\[H= H(\phi,\psi) =  \sup_{\lambda} \{ \phi_{[\lambda\psi]} = \phi \}.\]
It turns out that this exhaustion function is essentially the ``time derivative'' of the associated Legendre function:
\begin{theorem}\label{thm:rightderivative}
  Suppose $\widehat{\phi}_t$ is the Legendre transform associated to the test curve $\psi_{\lambda} = \lambda \psi$ for $\lambda\in (0,1)$.  Then
\[H = \frac{d\widehat{\phi}_t}{dt}\bigg\vert_{t=0^+}. \]
\end{theorem}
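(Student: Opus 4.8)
The idea is to compute both sides of the claimed identity and show they agree. Recall that for the test curve $\psi_\lambda = \lambda\psi$, the Legendre transform is $\widehat\phi_t = \sup_\lambda\{\phi_{[\lambda\psi]} + \lambda t\}^*$ for $t \in \mathbb R$, so in particular $\widehat\phi_0 = \sup_\lambda\{\phi_{[\lambda\psi]}\}^* = \phi_{[0\cdot\psi]} = \phi$ (the envelope at $\lambda = 0$ dominates all the others, since increasing $\lambda$ makes the singularity constraint more restrictive; more carefully, $\phi_{[\lambda\psi]}$ is decreasing in $\lambda$ because $\lambda\psi$ has a ``larger'' singularity as $\lambda$ grows). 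Thus $\widehat\phi_t$ emanates from $\phi$ and we are asking for its one-sided $t$-derivative at $t = 0$, which by convexity of $t\mapsto\widehat\phi_t$ exists pointwise.

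**Key steps.** First I would establish, using the $S^1$-invariant reformulation: write $t = -\ln|w|^2$ so that $\Phi(z,w) = \widehat\phi_t(z)$ is a positive metric on $\pi^*L$ over $X\times B$ with $MA(\Phi) = 0$ and $\Phi|_{\partial B} = \phi$. By convexity in $t$ the function $\frac{d\widehat\phi_t}{dt}\big\vert_{t=0^+}(x)$ equals $\sup_{t>0}\frac{\widehat\phi_t(x) - \phi(x)}{t}$, and also equals $\lim_{t\to 0^+}\frac{\widehat\phi_t(x)-\phi(x)}{t}$. Next, at a point $x$ with $H(\phi,\psi)(x) = \lambda_0$, I would use the definition $H = \sup\{\lambda : \phi_{[\lambda\psi]}(x) = \phi(x)\}$. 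For $\lambda < \lambda_0$ we have $\phi_{[\lambda\psi]}(x) = \phi(x)$, so $\widehat\phi_t(x) \ge \phi_{[\lambda\psi]}(x) + \lambda t = \phi(x) + \lambda t$; letting $\lambda \to \lambda_0^-$ gives $\widehat\phi_t(x) \ge \phi(x) + \lambda_0 t$, hence the derivative is $\ge \lambda_0 = H(x)$. For the reverse inequality, for any $\lambda > \lambda_0$ we have $\phi_{[\lambda\psi]}(x) < \phi(x)$, say $\phi_{[\lambda\psi]}(x) = \phi(x) - \delta_\lambda$ with $\delta_\lambda > 0$; then the contribution $\phi_{[\lambda\psi]}(x) + \lambda t = \phi(x) - \delta_\lambda + \lambda t$ beats $\phi(x) + \lambda_0 t$ only once $t > \delta_\lambda/(\lambda - \lambda_0)$, which is bounded away from $0$ for each fixed $\lambda$. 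The delicate point is that one must control the supremum over all $\lambda$ simultaneously (and handle the upper semicontinuous regularisation), so I would argue that for $t$ small the sup in the Legendre transform is effectively achieved near $\lambda \le \lambda_0$; this is where one invokes concavity/continuity properties of $\lambda\mapsto\phi_{[\lambda\psi]}(x)$ and the fact that $\phi_{[\lambda\psi]}(x) + \lambda t$ as a function of $\lambda$ is, for small $t$, maximized at the ``corner'' $\lambda = \lambda_0$.

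**Main obstacle.** The principal difficulty is the interchange of the $t$-derivative (a limit as $t\to 0^+$) with the $\lambda$-supremum and the upper semicontinuous regularisation $(\cdot)^*$ defining $\widehat\phi_t$. Pointwise the argument above is clean, but to get the stated equality of functions on $X$ (not merely up to a negligible set) one needs the regularisation not to destroy the pointwise values, which should follow because $H$ is itself upper semicontinuous (being a decreasing limit of envelopes of psh functions) and because $\widehat\phi_t$ is already regularised. I would also need to confirm that the derivative $\frac{d\widehat\phi_t}{dt}\big\vert_{t=0^+}$ is genuinely computed as a pointwise limit rather than, say, a weak or almost-everywhere limit — here the convexity in $t$ of the Legendre transform is exactly what licenses this, and I would cite the monotonicity of difference quotients. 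Finally I would double-check the edge behaviour at points where $H(x) = 0$ (so $\phi_{[\lambda\psi]}(x) < \phi(x)$ for every $\lambda > 0$) and at points where $H(x) = 1$ (the endpoint of the test curve interval), confirming in both cases that the one-sided derivative recovers the claimed value.
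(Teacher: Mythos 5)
Your lower bound is fine, and your overall framing (convexity of $t\mapsto\widehat{\phi}_t$, monotone difference quotients, Legendre duality in $\lambda$) is the right one; the problem is the upper bound, where the gap you flag as the ``main obstacle'' is genuinely not closed by what you propose. Your corner analysis at $\lambda_0=H(x)$ controls the \emph{unregularised} supremum $u_t(x)=\sup_{\lambda}\{\phi_{[\lambda\psi]}(x)+\lambda t\}$, and for that function the pointwise statement is indeed elementary convex duality. But the theorem is about $\widehat{\phi}_t=u_t^*$, and at a fixed bad point $x_0$ one may have $\widehat{\phi}_t(x_0)>u_t(x_0)$ for every $t>0$; nothing in your sketch rules out that this excess is of order $t$ and raises the one-sided derivative above $H(x_0)$. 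Appealing to upper semicontinuity of $H$, or to the fact that ``$\widehat{\phi}_t$ is already regularised,'' does not bear on this: the issue is precisely whether regularisation has increased the value at $x_0$ relative to $u_t(x_0)$, uniformly as $t\to 0^+$. Indeed the desired inequality $\inf_{t>0}t^{-1}(\widehat{\phi}_t(x)-\phi(x))\le H(x)$ is \emph{equivalent} to knowing that $\gamma_\lambda(x):=\inf_{t\ge 0}\{\widehat{\phi}_t(x)-\lambda t\}$ is $<\phi(x)$ whenever $\lambda>H(x)$, so some substitute for the identity $\gamma_\lambda=\phi_{[\lambda\psi]}$ is unavoidable.

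The paper supplies exactly this missing mechanism. One sets $\gamma_\lambda=\inf_{t}\{\widehat{\phi}_t-\lambda t\}$, which is plurisubharmonic by the Kiselman minimum principle (this is where the $S^1$-invariant picture on $X\times B$ is used); one then observes that $\widehat{\phi}_t(x)=u_t(x)$ for all $t$ and almost every $x$ (the negligible set can be taken independent of $t$ using convexity/continuity in $t$), so that the elementary involution argument gives $\gamma_\lambda=\phi_{[\lambda\psi]}$ almost everywhere, and hence everywhere, since two psh functions agreeing almost everywhere coincide. With that identity in hand, the pointwise convex-analysis lemma (Lemma \ref{lem:convex}) applied to $u_t=\widehat{\phi}_t(x)$, $v_\lambda=\gamma_\lambda(x)$ yields $\frac{d\widehat{\phi}_t}{dt}\big\vert_{t=0^+}(x)=\sup\{\lambda:\gamma_\lambda(x)=\phi(x)\}=H(x)$ at every point, not merely almost everywhere. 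So to complete your argument you should replace the heuristic ``the sup is effectively achieved near $\lambda\le\lambda_0$'' by this Kiselman-plus-a.e.-identity step (or an equivalent device); as written, the upper bound is only established at points where the regularisation is harmless.
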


A particularly interesting case of the exhaustion function arises when $\psi = \ln |s_D|^2$ where $s_D$ is the defining function of some divisor $D\subset X$.    In this case there is a natural expression for the exhaustion function as a limit of algebraic objects.  Fix $\lambda\in \mathbb Q^+$ and for each large $k$ with $k\lambda\in \mathbb N$ let $\{s_{\alpha}\}$ be an $L^2$ orthonormal basis for $H^0(L^k)$ that is compatible with the filtration  determined by the order of vanishing $\nu_{\alpha}=\ord_D(s_{\alpha})$ along $D$.  That is, for each $j$ the set $\{ s_{\alpha} : \nu_{\alpha}\ge j\}$ is a basis for $H^0(L^k\otimes \mathcal I_D^{j})$. 

\begin{theorem}\label{theorem:gradientmapfromsections}
  We have
\[H = \limsup_{k\to \infty} \frac{\sum_{\alpha} \nu_{\alpha} |s_{\alpha}|_\phi^2}{\sum_{\alpha} k |s_{\alpha}|_\phi^2}\]
almost everywhere on $X$.
\end{theorem}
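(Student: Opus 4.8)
The plan is to rewrite the fraction in the theorem in terms of partial Bergman functions and then quote Theorem~\ref{thm:weakmeasureconvergence}. Throughout I take $L$ ample, $D$ smooth, and $\phi$ a smooth metric with $dd^c\phi>0$ (so $X(0)=X$ and $\phi_{[0]}=\phi$); the general case requires only cosmetic changes (and a restriction to $X(0)$). For $j\in\mathbb N$ write $G_j:=\sum_{\alpha:\nu_\alpha\ge j}|s_\alpha|_\phi^2$, so that $G_0$ is the full Bergman function of $(L^k,\phi)$, the sequence $j\mapsto G_j$ is non-increasing, and $G_j\equiv 0$ once $j$ exceeds the largest order of vanishing along $D$ of a section of $H^0(L^k)$, which is at most $ck$ with $c<\infty$ the pseudoeffective threshold of the ray $L-\lambda D$. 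Since $\{s_\alpha:\nu_\alpha\ge j\}$ is an $L^2$-orthonormal basis of $H^0(L^k\otimes\mathcal I_D^j)=H^0(\mathcal I(j\psi)\otimes L^k)$, the function $G_j$ is independent of the chosen filtration-compatible basis and equals $B_k(\phi,\lambda\psi)$ whenever $\lfloor k\lambda\rfloor=j$; and summing $\nu_\alpha=\sum_{j\ge1}\mathbf 1_{\{\nu_\alpha\ge j\}}$ over $\alpha$ shows that the fraction in the theorem is
\[ R_k(x):=\frac{1}{k\,G_0(x)}\sum_{j\ge 1}G_j(x), \]
which is therefore basis-independent. I would in fact obtain the stronger statement $R_k(x)\to H(x)$ for a.e.\ $x$.

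Next I fix $x$ outside a null set and put $u_\lambda:=\phi-\phi_{[\lambda\psi]}\ge0$. As $\psi$ is bounded above on $X$, $\lambda\mapsto\phi_{[\lambda\psi]}(x)$ is non-increasing, so $\{\lambda:u_\lambda(x)=0\}=\{\lambda:x\in D(\phi,\lambda\psi)\}$ is an initial interval with right endpoint $H(x)$; moreover $H(x)\le c$, since the existence of a positive metric on $L$ with singularities at least those of $\lambda\psi$ would force $L-\lambda D$ to be pseudoeffective. Applying Theorem~\ref{thm:weakmeasureconvergence} to the data $(\phi,\lambda\psi)$ with $0\le\lambda<c$ (so that $L-\lambda D$ is big) gives $k^{-n}B_k(\phi,\lambda\psi)(x)\to\mathbf 1_{D(\phi,\lambda\psi)\cap X(0)}(x)\,m(x)$ for a.e.\ $x$, where $m:=MA(\phi)/dV$; in particular $k^{-n}G_0(x)\to m(x)$ (the case $\lambda=0$, using $\phi_{[0]}=\phi$). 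I take $x$ outside $\{m=0\}$ and outside the countably many exceptional null sets coming from these limits as $\lambda$ ranges over $[0,c)\cap\mathbb Q$; then $m(x)>0$.

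For the lower bound I fix a rational $\lambda_1\in(0,H(x))$, so $x\in D(\phi,\lambda_1\psi)\cap X(0)$, and use $G_j\ge G_{\lfloor k\lambda_1\rfloor}=B_k(\phi,\lambda_1\psi)$ for $j\le\lfloor k\lambda_1\rfloor$:
\[ R_k(x)\ge\frac{\lfloor k\lambda_1\rfloor}{k}\cdot\frac{k^{-n}B_k(\phi,\lambda_1\psi)(x)}{k^{-n}G_0(x)}\;\longrightarrow\;\lambda_1\cdot\frac{m(x)}{m(x)}=\lambda_1, \]
so $\liminf_k R_k(x)\ge H(x)$ on letting $\lambda_1\uparrow H(x)$. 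For the upper bound, if $H(x)<c$ I fix a rational $\lambda_2\in(H(x),c)$, so $x\notin D(\phi,\lambda_2\psi)$, and combine $G_j\le G_0$ for $j\le\lfloor k\lambda_2\rfloor$ with $\sum_{j>\lfloor k\lambda_2\rfloor}G_j\le ck\,G_{\lfloor k\lambda_2\rfloor}=ck\,B_k(\phi,\lambda_2\psi)$ (at most $ck$ nonzero terms, each $\le G_{\lfloor k\lambda_2\rfloor}$):
\[ R_k(x)\le\frac{\lfloor k\lambda_2\rfloor}{k}+c\cdot\frac{k^{-n}B_k(\phi,\lambda_2\psi)(x)}{k^{-n}G_0(x)}\;\longrightarrow\;\lambda_2+c\cdot\frac{0}{m(x)}=\lambda_2, \]
so $\limsup_k R_k(x)\le H(x)$ on letting $\lambda_2\downarrow H(x)$; the remaining case $H(x)=c$ is dealt with by the trivial bound $0\le R_k\le c$. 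Together these give $R_k(x)\to H(x)$ for a.e.\ $x$.

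The main point, and the reason the elementary Theorem~\ref{thm:partialbergman} does not suffice, is that on the logarithmic scale every $G_j$ with $j/k<H(x)$ — and $G_0$ itself — grows like $e^{o(k)}$, so $k^{-1}\log R_k(x)\to 0$ carries no information about the value $H(x)$; one genuinely needs the order-$k^n$ asymptotics of Theorem~\ref{thm:weakmeasureconvergence}. The only technical wrinkle is that that theorem is available pointwise only for each \emph{fixed} $\lambda$, not uniformly in $\lambda$; the monotonicity of $j\mapsto G_j$ sidesteps this, since it squeezes the whole partial sum $\sum_{j\le\lfloor k\lambda\rfloor}G_j$ between $\lfloor k\lambda\rfloor\,G_{\lfloor k\lambda\rfloor}$ and $\lfloor k\lambda\rfloor\,G_0$, reducing everything to the behaviour of $B_k(\phi,\lambda\psi)$ at individual rational values of $\lambda$. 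One can alternatively view $R_k(x)$ as the $t=0$ derivative of the Bergman-kernel approximation $t\mapsto\frac1k\log\sum_\alpha e^{\nu_\alpha t}|s_\alpha|_\phi^2$ to the geodesic ray, which via Theorem~\ref{thm:rightderivative} gives a second, more indirect, derivation.
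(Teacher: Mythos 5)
Your proof is correct, and while your upper bound $\limsup_k R_k\le H$ is essentially the paper's own argument (split the sum over $j$ at $\lfloor k\lambda'\rfloor$ for a rational $\lambda'>H(x)$ and kill the tail using the decay of $k^{-n}B_k(\phi,\lambda'\psi)(x)$ off the equilibrium set), your lower bound takes a genuinely different route. The paper never proves a pointwise lower bound: it integrates $f_k$ against $MA(\phi)$, identifies $\lim_k\int_X f_k\,MA(\phi)$ with $\int_\Delta x_1\,d\sigma$ and hence, via the Okounkov-body pushforward formula of Theorem \ref{thm:pushforwarddivisor}, with $\int_X H\,MA(\phi)$, and then concludes $f=H$ almost everywhere from (reverse) Fatou together with the everywhere inequality $f\le H$. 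You instead obtain the lower bound pointwise at a.e.\ $x$ from the pointwise a.e.\ convergence $k^{-n}B_k(\phi,\lambda\psi)\to\mathbf 1_{D(\phi,\lambda\psi)\cap X(0)}\,MA(\phi)/dV$ of Theorem \ref{thm:weakmeasureconvergence}, applied at countably many rational $\lambda$, with the monotonicity of $j\mapsto G_j$ reducing everything to these fixed rational values exactly as you say. Your route bypasses Theorem \ref{thm:pushforwarddivisor} entirely (and with it the Lazarsfeld--Musta\c{t}\u{a} slicing of the Okounkov body and the volume identity for the equilibrium sets), needs no Fatou step, and yields the slightly stronger conclusion that the limit, not merely the limsup, exists a.e.; the paper's route is shorter given that the pushforward theorem is wanted anyway, and it uses only the integrated (weak) form of the Bergman asymptotics rather than the pointwise statement. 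Two small caveats, neither of which separates you from the paper in substance: your argument gives equality only a.e.\ on $X(0)$ (equivalently $MA(\phi)$-a.e.), but the paper's Fatou argument has exactly the same scope since $MA(\phi)$ vanishes off $X(0)$; and your parenthetical claim that $L-\lambda D$ is big for every $\lambda$ strictly below the pseudoeffective threshold deserves a word of justification --- write $L-\lambda D$ as a convex combination of the big class $L$ and the pseudoeffective class $L-cD$ --- which is also where bigness of $L$ enters and confirms that your $c$ plays the role of the paper's $\lambda_{\max}$.
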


The previous two theorems can put into context through work of Phong-Sturm on weak geodesics in the space of K\"ahler metrics.    We continue the same notation as above, so  $\{s_{\alpha}\}$ is a basis for $H^0(L^k)$ that respects the filtration by order of vanishing along $D$.  The next is a special case of a construction from \cite{Sturm4}.

\begin{definition}
Let
\[\Phi_k(t):=\frac{1}{k}\ln(\sum_i e^{t\nu_{\alpha}}|s_\alpha|^2)\]
The \emph{Phong-Sturm} ray is the limit
\begin{equation}
\Phi_t:=\lim_{k\to\infty}(\sup_{l\geq k}\Phi_l(t))^*.\label{equationps}
\end{equation}
\end{definition}

In \cite{RossNystrom} it is shown that the Legendre transform of the test curve $\psi_{\lambda} = \lambda |s_D|^2$ equals the Phong-Sturm ray, namely
\[ \widehat{\phi}_t = \Phi(t).\]
So formally differentiating with respect to $t$, and ignoring various exchanges of limits, 
\begin{eqnarray*}
\frac{d}{dt}\bigg\vert_{t=0^+}  \widehat{\phi}_t &=& \frac{d}{dt}\bigg\vert_{t=0^+}  \Phi_t \simeq \frac{d}{dt}\bigg\vert_{t=0^+} \lim_{k\to \infty} \Phi_k(t)\\
&=&\lim_{k\to\infty} \frac{d}{dt}\bigg\vert_{t=0^+} \frac{1}{k}\ln(\sum_i e^{t\nu_{\alpha}}|s_\alpha|_{\phi}^2)  \\
&=& \lim_{k\to \infty} \frac{ \sum \nu_{\alpha} |s_{\alpha}|_{\phi}^2}{k\sum |s_{\alpha}|^2_{\phi}}. 
\end{eqnarray*}

Thus Theorems \ref{thm:rightderivative} and \ref{theorem:gradientmapfromsections} combine to give the same conclusion (almost everywhere), in that both sides are in fact equal to $H$.\medskip

We end with a remark concerning the connection between the exhaustion function $H(\phi, \ln |s_D|^2)$ associated to a divisor $D = \{ s_D=0\}$ and the geometry of the Okounkov body $\Delta(X,L)$ taken with respect to a flag with divisorial part $D$.

\begin{theorem}
  Let $H = H(\phi,\ln |s_D|^2)$ and let $p\colon \Delta(X,L)\to \mathbb R$ be the projection to the first coordinate.  Then
\[H_* (MA(\phi)) = p_* d\sigma\]
where $d\sigma$ denotes the Lebesgue measure on $\mathbb R^n$.
\end{theorem}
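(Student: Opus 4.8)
\emph{Proof proposal.} The plan is to reduce the asserted equality of measures to an equality of their one-dimensional survival functions and then to recognise both sides, for generic $t$, as one and the same asymptotic dimension count. First I would read $MA(\phi)$ as $\mathbf 1_{X(0)}MA(\phi)$ (equivalently, assume $\phi$ positive, so $X(0)=X$), so that $H_*(MA(\phi))$ and $p_*\,d\sigma$ are finite measures on $\mathbb R$. Since a finite measure on $\mathbb R$ is determined by its survival function $t\mapsto\nu((t,\infty))$, and such functions are automatically right-continuous, it is enough to show
\[ MA(\phi)(\{H>t\}) \;=\; \sigma\big(\Delta(X,L)\cap\{x_1>t\}\big) \]
for all but countably many $t$; the two sides will both turn out to equal $\lim_{k\to\infty}k^{-n}\dim H^0(L^k\otimes\mathcal I_D^{\lceil kt\rceil})$, a limit that exists and is continuous in $t$ (being a multiple of $\vol(L-tD)$), hence insensitive to replacing $\lceil kt\rceil$ by any integer $kt+O(1)$.

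For the Okounkov side I would invoke the slicing description of the Okounkov body: for a flag whose divisorial part is $D$, the valuation image of $H^0(L^k)\setminus\{0\}$ meets the half-space $\{x_1\ge kt\}$ in precisely the valuation vectors of $\{s:\ord_D s\ge kt\}=H^0(L^k\otimes\mathcal I_D^{\lceil kt\rceil})$, and the number of these equals the dimension of that space; so by the theory of Okounkov bodies of graded linear series (Lazarsfeld--Musta\c{t}\u{a}) one gets $\sigma(\Delta(X,L)\cap\{x_1\ge t\})=\lim_k k^{-n}\dim H^0(L^k\otimes\mathcal I_D^{\lceil kt\rceil})$, and this agrees with the strict-inequality version since $\{x_1=t\}$ is $\sigma$-null in the convex body $\Delta(X,L)$.

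For the envelope side I would first record the monotonicity $\phi_{[\lambda\psi]}\ge\phi_{[\lambda'\psi]}$ for $0<\lambda<\lambda'$, which is immediate because $\psi$ is bounded above (so $(\lambda'-\lambda)\psi=O(1)$ and every competitor for $\phi_{[\lambda'\psi]}$ competes for $\phi_{[\lambda\psi]}$). This makes $\lambda\mapsto D(\phi,\lambda\psi)$ decreasing, whence from the definition of $H$ one reads off $\{H>t\}\subseteq D(\phi,t\psi)\subseteq\{H\ge t\}$. Theorem \ref{thm:extremalmeasures} identifies $\mathbf 1_{D(\phi,t\psi)}MA(\phi)$ with $\mu(\phi,t\psi)$, so $MA(\phi)(\{H>t\})\le\mu(\phi,t\psi)(X)\le MA(\phi)(\{H\ge t\})$; as $t\mapsto MA(\phi)(\{H\ge t\})$ is decreasing it is continuous off a countable set, and for such $t$ the two outer terms coincide. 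Finally, since $X$ is compact, the weak convergence in Theorem \ref{thm:weakmeasureconvergence} (applied with $t\psi$ in place of $\psi$, which is legitimate as $L-tD$ is big in the relevant range) preserves total mass, so $\mu(\phi,t\psi)(X)=\lim_k k^{-n}\int_X B_k(\phi,t\psi)\,dV=\lim_k k^{-n}\dim H^0(\mathcal I(kt\psi)\otimes L^k)$; and $\mathcal I(kt\psi)=\mathcal I_D^{m_k}$ with $m_k=kt+O(1)$, giving the same limit as on the Okounkov side. Assembling these gives the equality of survival functions for generic $t$, hence the theorem.

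The main obstacle I anticipate is not the one-dimensional bookkeeping but lining up the three avatars of the filtration — the multiplier ideals $\mathcal I(kt\psi)$ in the partial Bergman function, the ideals $\mathcal I_D^{j}$ cut out by $\ord_D$ that index the Okounkov valuation, and the sub-series computing $\vol(L-tD)$ — precisely enough to conclude that the Bergman-integral limit equals the lattice-point count of a slice of $\Delta(X,L)$, together with citing the exact form $\sigma(\Delta(X,L)\cap\{x_1\ge t\})=\tfrac1{n!}\vol(L-tD)$ of the slicing theorem. One must also check $\sup_{\Delta(X,L)}x_1<\infty$ so that $H$ is $MA(\phi)$-essentially bounded and the survival functions have compact support, and treat the range of $t$ with $\vol(L-tD)=0$ separately; both are immediate once $L-D$ is assumed ample.
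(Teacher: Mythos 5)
Your proposal is correct and takes essentially the same route as the paper: both survival functions are identified with $\vol(L-tD)$, on one side via the equilibrium-set volume formula (Corollary \ref{cor:equilibriumvolume}, i.e.\ Theorems \ref{thm:extremalmeasures} and \ref{thm:weakmeasureconvergence} applied to $t\ln|s_D|^2$), and on the other via the Lazarsfeld--Musta\c{t}\u{a} slicing description of $\Delta(X,L)\cap\{x_1\ge t\}$. If anything, your sandwich $\{H>t\}\subset D(\phi,t\psi)\subset\{H\ge t\}$ together with the generic-continuity-of-survival-functions step is slightly more careful than the paper's concluding one-sided inequality plus equal-total-mass argument.
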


This theorem is really nothing more than an unwinding of the definitions and an application of the technical results above.  It partly resembles the Duistermaat-Heckman pushforward property of the moment map in toric geometry, and for this reason we think of $H$ as a kind of weak ``Hamiltonian'' arising from $\phi$ and $D$.\medskip

\noindent{\bf Comparison with other works:} In the time between this article first appearing in preprint form and its publication, there have been some developments directly related that the reader may like to be aware of.  It turns out that the envelopes considered in this paper are intimately related to the class $\mathcal E(X,\omega)$ of $\omega$-plurisubharmonic functions with finite weighted Monge-Amp\'ere energy introduced by Guedj-Zeriahi \cite{GZ}.  In fact, Darvas proves in \cite[Theorem 3]{Darvas} that this class can be characterised using the envelopes $P_{[\psi]}(\phi)$ (see Remark \ref{rmk:darvas}).  Similar envelopes have been studied by Darvas-Rubinstein, with a similar regularity result to the one proved here given in \cite[Theorem 2.5]{DR}. \medskip
 
\noindent {\bf Acknowledgments: } We wish to thank Bo Berndtsson, Robert Berman,  Julien Keller, Reza Seyyedali, Ivan Smith and Richard Thomas for discussions about this work.  We also with to thank Alexander Rashkovskii for pointing out an error in an earlier version of this preprint.  During this project the first author has been supported by a Marie Curie Grant within the 7\textsuperscript{th} European Community Framework Programme and by an EPSRC Career Acceleration Fellowship.

\section{Preliminaries}
\subsection{Singular metrics}

Let $X$ be a K\"ahler manifold of
complex dimension $n,$ and $L$ be a line bundle on $X.$ A
hermitian metric $h=e^{-\phi}$ on $L$ is a choice of
hermitian scalar product on the complex line $L_p$ at each point $p$ on the
manifold. If $f$ is a local holomorphic frame for $L$ on $U_f$, we write \[|f|_h^2=h_f=e^{-\phi_f},\] where $\phi_f$ is a function on $U_f$.  We say that $\phi$ is continuous if this holds for each $\phi_f$ (with analogous definitions for smooth, Lipschitz, $C^{1,1}$ etc.).   It is standard abuse of notation to let $\phi$ denote the metric $h=e^{-\phi}$ and to confuse $\phi$ with $\phi_f$ if a given frame is to be understood.  Thus if $\phi$ is a metric on $L,$ $k\phi$ is a metric on $kL:=L^{\otimes k}.$

The curvature of a smooth metric is given by
$dd^c\phi$ which is the $(1,1)$-form locally defined as $dd^c\phi_f,$
where $f$ is any local holomorphic frame and $d^c$ is the differential operator \[\frac{i}{2\pi}(\partial-\bar{\partial}),\]
so $dd^c=(i/\pi) \partial \bar{\partial}.$   The
curvature form of a smooth metric $\phi$ is a
representative for the first Chern class of $L,$ denoted by $c_1(L).$
A smooth metric $\phi$ is said to be \emph{strictly positive} if  $dd^c\phi$
is strictly positive as a $(1,1)$-form, i.e.\ if for any local holomorphic frame
$f,$ the function $\phi_f$ is strictly plurisubharmonic.

A \emph{positive singular metric} is a metric that can be written as
$\psi:=\phi+u,$ where $\phi$ is a smooth metric and $u$ is a
$dd^c\phi$-psh function, i.e.\ $u$ is upper semicontinuous and
$dd^c\psi:=dd^c \phi+dd^c u$ is a positive $(1,1)$-current. For
convenience we also allow $u\equiv -\infty.$  The singular locus of $\psi$ will be denoted by $\Sing(\psi)$ is the set on which $\psi$ is not locally bounded.  We let $\PSH(L)$ denote the space of positive singular metrics on $L$.   If $\Sing(\psi)$ is empty we say $\psi$ is \emph{locally bounded} (we will mostly consider the case $X$ is compact in which case this is equivalent to being \emph{globally bounded}).

We note that $\PSH(L)$ is a convex set, since any convex combination
of positive metrics yields a positive metric. Moreover if $\psi_i \in \PSH(L)$ for $i\in I$ are uniformly bounded above by some fixed positive metric, then the upper semicontinuous regularisation of the supremum denoted by $(\sup\{\psi_i : i\in I\})^*$ lies in $\PSH(L)$ as well.  If $\psi\in \PSH(L),$ then the translate $\psi+c$ where $c$ is a real constant
is also in $\PSH(L).$

A plurisubharmonic function $u$ on a set $W$ is \emph{maximal} if for every relative compact $U\subset W$ and upper semicontinuous function $v$ on $\overline{U}$ with $v\in \PSH(U)$ the inequality $v\le u$ on $\partial U$ implies $v\le u$ on all of $U$.  

If $\psi$ and $\phi$ are metrics on $L$ and there exists a constant $C$ such that $\psi \leq \phi+C,$ we say that $\psi$ is \emph{more singular} than $\phi$.   When specific mention of the constant $C$ is unimportant we shall write this as $\psi\le \phi + O(1)$.  Of course $\psi\le \phi +O(1)$ if and only if $\psi\le \phi+O(1)$ holds on some neighbourhood of $\Sing(\phi)$ and we will use this in the sequel without further comment.   

More generally, if $\psi$ in a metric on $L_1$ and $\phi$ a metric on $L_2$ we will write $\psi\le \phi + O(1)$ to mean there is a locally bounded metric $\tau$ on $L_1\otimes L_2^*$ such that $\psi\le \phi + \tau$.  The condition $\psi \le \phi +O(1)$ and $\phi \le \psi + O(1)$ is an equivalence relation, which we denote by $\psi\sim \phi + O(1)$, and following \cite{Guedj} an equivalence class $[\psi]$ is called a \emph{singularity type}.

If $\psi_i$ is a metric on $F_i$ for $i=1,2$ then by abuse of notation we will occasionally write
\begin{equation}
  \label{eq:abusesupremum}
\sup\{\psi_1,\psi_2\}  
\end{equation}
to mean the metric on $F_1+F_2$ given by $\sup\{\psi_1 + \phi_{F_2}, \phi_{F_1} + \psi_2\}$ where $\phi_{F_i}$ is a choice of globally bounded metric on $F_i$.  Thus the singularity type of $\sup\{\psi_1,\psi_2\}$ is independent of choice of $\phi_{F_i}$.  

Given a coherent analytic ideal sheaf $\mathcal I\subset \mathcal O_X$ and a constant $c>0$ we say that $\psi$ has \emph{analytic singularities modeled on} $(\mathcal I,c)$ if $X$ is covered by open sets $U$ on which we can write
\begin{equation}
  \label{eq:analyticsingularities}
\psi=c(\ln \sum |f_i|^{2}) + u  
\end{equation}
where $f_i$ are generators for $\mathcal I(U)$, and $u$ is a smooth function.  If $\mathcal I$ is algebraic, $c$ is rational and we can arrange this to hold in the Zariski topology then we say $\psi$ has \emph{algebraic singularities} modeled on $(\mathcal I,c)$. 

We say that a singular metric $\psi$ is \emph{exponentially H\"older continuous} with exponent $c>0$, if it is smooth away from $\Sing(\psi)$ and over the singular locus satisfies
\[ |e^{\psi(x)} - e^{\psi(y)} | \le C|x-y|^c\]
for some constant $C$ (here we are taking a local expression for $\psi$ thought of as a function on some coordinate chart $U$ and the norm on the right hand side is taken to be the Euclidean norm on the coordinates).  A metric is \emph{exponentially Lipschitz} if it is H\"older continuous  with exponent $c=1$.   Note that if $\psi$ has analytic singularities as in \eqref{eq:analyticsingularities} then it is exponentially H\"older continuous with exponent $c$.

Given a metric $\psi$ the \emph{multiplier ideal} $\mathcal I(\psi)$ is the ideal generated locally by holomorphic functions $f$ such that $|f|^2e^{-\psi}\in L^1_{loc}$.   So if $\psi\le \phi + O(1)$ then  clearly $\mathcal I(\psi)\subset \mathcal I(\phi)$.

The \emph{Monge-Amp\`ere measure} of a metric $\phi$ is defined as the
positive measure \[MA(\phi):=(1/n!)(dd^c \phi)^n.\]  When $\phi$ is smooth this is defined by taking the wedge product of the $(1,1)$ forms $dd^c \phi$ in the usual sense.   Through the fundamental work of Bedford-Taylor, the Monge-Amp\`ere measure can in fact be defined on the set on which $\phi$ is locally bounded, and this measure does not put any mass on pluripolar sets (i.e.\ sets that are locally contained in the unbounded locus of a local plurisubharmonic function).

\subsection{Augmented Base Locus}

Let $L$ be a big line bundle.  The base locus of $L$ is the set \[Bs(L) = \bigcap_{s\in H^0(L)} \{ x: s(x)= 0\}\] and the stable base locus is $\mathbb B(L) = \bigcap_k Bs(kL)$.  We denote by $\mathbb B_+(L)$ the augmented base locus of $L$ which is given by
\[ \mathbb B_+(L) = \mathbb B(L-\epsilon A) \quad \text{ for any small rational }\epsilon>0\]
where $A$ is any fixed ample line bundle on $X$.  It is a fact that $L$ is ample if and only if $\mathbb B_+(L)$ is empty, and is big if and only if $\mathbb B_+(L)\neq X$ \cite[Example 1.7]{Ein}.

\subsection{Partial Bergman Functions}
Now suppose we fix a smooth volume form $dV$ on $X$.  Then any metric $\phi$ on $L$ induces an $L^2$-inner product on $H^0(L^k)$ for all $k$,  whose norm is given by
\begin{equation}
  \label{eq:l2norm}
  \|s\|_{\phi,dV}^2 = \int_X |s|_{k\phi}^2 dV \quad \text{for }s\in H^0(L^k).
\end{equation}
We will omit the $dV$ from the notation when the volume form is understood.  

\begin{definition}\label{thm:equilibriummeasure}
  The \emph{partial Bergman function} associated to $\phi$ and $\psi$ is the function
\[B_k(\phi,\psi) = \sum_{\alpha} |s_{\alpha}|_{\phi}^2\]
where $\{s_{\alpha}\}$ is any $L^2$-orthonormal basis for $H^0(\mathcal I(k\psi) L^k)$.
\end{definition}

If $\psi$ is locally bounded then the associated multiplier ideal sheaf is trivial, and  $B_k(\phi,\psi)$ becomes the usual Bergman function for $\phi$ which for simplicity we shall denote by $B_k(\phi)$.   Thus $k^{-1}\ln B_k(\phi,\psi)+\phi$ is a metric on $L$ with singularities modeled on $(\mathcal I(k\psi), k^{-1})$.

\begin{remark}
  The partial Bergman function depends on the choice of smooth volume form, but it is easy to verify directly that the limit $k^{-1} \ln B(\phi,\psi)$ as $k$ tends to infinity does not since the quotient of any two volume forms is globally bounded.
\end{remark}

\begin{example}\label{example:submanifold}
  Let $Y\subset X$ be a smooth subvariety of codimension $r$ which is given by the intersection of a finite number of sections of some line bundle $F$.  Then we can define a singular metric $\psi = \ln \sum_i |s_i|^2$.  To calculate the multiplier ideal let $\pi\colon \tilde{X}\to X$ be the blowup along $Y$ with exceptional divisor $E$ and canonical divisor $K_{\tilde{X}} = \pi^* K_X + (r-1)E$.    Then, by smoothness of $Y$, $\pi_* \mathcal O_{\tilde{X}}(-uE) = \mathcal I_Y^u$ for all $u\ge 0$.   Following \cite[5.9]{Demaillybook} one computes $\mathcal I(k\psi) = \pi_* \mathcal O_{\tilde{X}}((r-1-k)E) = \mathcal I_Y^{k-r+1}$.    Thus $B_k(\phi,\psi)$ is precisely the partial Bergman kernel consisting of sections that vanish to a particular order along $Y$ (which for toric manifolds is studied in \cite{Pokorny, PokornySinger}).
\end{example}

\subsection{The Ohsawa-Takegoshi extension theorem}\label{sec:ohsawa}

We will need the following version of the Ohsawa-Takegoshi extension theorem.  To state it cleanly we shall say that a metric $\phi_F$ on a vector bundle $F$ has the \emph{extension property with constant $C$} if for any $x\in X$ and $\zeta\in F_x$ there is an $s\in H^0(X,F)$ with $s(x) = \zeta$ and 
\[ \|s\|_{\phi_F} \le C |s(x)|_{\phi_F}.\]

\begin{theorem}(Ohsawa-Takegoshi)\label{thm:ohsawa}
Suppose $c>0$ is given.  Then there exists a $k'$ and a $C'$ such that the following holds:  for all $k\ge k'$ and all $L'$ and $F'$ with singular metrics $\phi_{L'}$ and $\phi_{F'}$ on $L',F'$ respectively such that
\begin{equation}
dd^c \phi_{L'} \ge c\omega \quad dd^c \phi_{F'} \ge -c\omega\label{eq:curvature}
\end{equation}
the metric $k\phi_{L'} + \phi_{F'}$ has the extension property with constant $C'$.
\end{theorem}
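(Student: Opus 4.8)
The plan is to deduce this from the Ohsawa--Takegoshi extension theorem applied to the $0$-dimensional submanifold $\{x\}\subset X$; the only real content is checking that the resulting constant can be taken independent of $x$, of $k$, and of the line bundles $L',F'$ and their metrics.

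First I would fix, once and for all, data depending only on $(X,\omega)$: a smooth metric $h_{K_X}$ on $K_X$ with $-C_1\omega\le dd^c h_{K_X}\le C_1\omega$ for some $C_1>0$; a constant $C_2>0$ with $C_2^{-1}\,dV\le \omega^n/n!\le C_2\,dV$; a finite atlas of holomorphic coordinate charts on $X$; and a radius $r_0>0$ such that every $x\in X$ lies in one of these charts together with the coordinate ball of radius $2r_0$ about $x$. Using this atlas one realises, uniformly in $x$, the reduced point $\{x\}$ as a connected component of the zero set of a global section $\sigma_x$ of a rank $n$ Hermitian bundle with $|\sigma_x|\le 1$: concretely one takes $\sigma_x$ to be a fixed cut-off, rescaled by $1/(2r_0)$, of the coordinate function $z\mapsto(z_1-z_1(x),\dots,z_n-z_n(x))$ in the chart containing $x$. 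By compactness of $X$ the Jacobian factor $|\Lambda^n d\sigma_x|$ that enters the Ohsawa--Takegoshi estimate is then bounded above and below by positive constants independent of $x$, and $n\,dd^c\log|\sigma_x|^2\ge -C_3\,\omega$ for a constant $C_3$ depending only on $r_0$ and $n$ (its negative part is supported on the fixed cut-off region, where $\log|z-z(x)|^2$ has uniformly bounded derivatives).

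Now let $c>0$ be given. Set $M_k:=kL'\otimes F'\otimes K_X^{-1}$, equipped with the metric $h_{M_k}:=k\phi_{L'}+\phi_{F'}-h_{K_X}$. By the curvature hypotheses \eqref{eq:curvature},
\[
dd^c h_{M_k}=k\,dd^c\phi_{L'}+dd^c\phi_{F'}-dd^c h_{K_X}\ \ge\ (kc-c-C_1)\,\omega,
\]
so choosing $k'$ with $k'c\ge c+C_1+C_3+1$ we get $dd^c h_{M_k}\ge\omega$ and $dd^c h_{M_k}+n\,dd^c\log|\sigma_x|^2\ge 0$ for all $k\ge k'$ and all $x\in X$; these are the positivity conditions required by the extension theorem. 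Applying Ohsawa--Takegoshi in the form that extends holomorphic sections from a point (see e.g.\ \cite{Demaillybook}) to the line bundle $kL'\otimes F'=K_X+M_k$ with the weight $h_{M_k}$, we obtain, for each $\zeta\in(kL'\otimes F')_x$, a section $s\in H^0(X,kL'\otimes F')$ with $s(x)=\zeta$ and
\[
\int_X |s|^2_{h_{M_k}}\,\frac{\omega^n}{n!}\ \le\ C_n\,|\zeta|^2_{h_{M_k}},
\]
where $C_n$ depends only on $n$ and on the fixed data above, and in particular not on $k$, not on $L'$ or $F'$, and not on the (possibly singular) metrics $\phi_{L'},\phi_{F'}$; this uniformity is exactly what the curvature lower bound $dd^c h_{M_k}\ge\omega$ buys. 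Writing $|s|^2_{h_{M_k}}=|s|^2_{\phi_{kL'\otimes F'}}\,e^{h_{K_X}}$ and using the uniform bounds on $h_{K_X}$ and on $dV$ versus $\omega^n/n!$, this becomes
\[
\|s\|^2_{\phi_{kL'\otimes F'}}\ \le\ C'\,|s(x)|^2_{\phi_{kL'\otimes F'}},\qquad C':=C_2^2\,e^{2\sup_X|h_{K_X}|}\,C_n,
\]
which is the asserted extension property with a constant $C'$ independent of $k$, $L'$, $F'$ and the metrics. (If $\phi_{L'}(x)=-\infty$ or $\phi_{F'}(x)=-\infty$ the right-hand side is $+\infty$ and there is nothing to prove.)

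The step I expect to be the main obstacle is precisely the uniformity of the Ohsawa--Takegoshi constant: one must invoke a version of the extension theorem that allows an arbitrary, possibly very singular, metric on the positive part $M_k$ and still produces a constant depending only on the codimension, on a fixed lower bound for the curvature of $M_k$, and on the ambient geometry. This is standard --- it is the reason one works with the twisted Bochner--Kodaira inequality rather than the naive H\"ormander estimate --- but turning it into the statement as stated forces one to choose the defining sections $\sigma_x$ and the associated cut-off scales in a way that is genuinely uniform in $x$, which is where the compactness of $X$ enters.
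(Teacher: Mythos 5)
The paper itself gives no proof of Theorem \ref{thm:ohsawa}: it is quoted as a known version of the Ohsawa--Takegoshi theorem and only used through the extension property, so there is nothing of the authors' to compare your argument against. Your overall route is the standard and correct one: twist by $K_X^{-1}$, absorb the bounded negative curvature of $h_{K_X}$, of $\phi_{F'}$ and of the point-weight by choosing $k'$ with $k'c\ge c+C_1+C_3+1$, and invoke an extension-from-a-point statement whose constant depends only on $(X,\omega)$, $n$ and the curvature lower bound --- and you correctly identify that the real content is the independence of the constant from the (possibly very singular) weight, which is what the genuine Ohsawa--Takegoshi/twisted Bochner--Kodaira estimate provides and naive H\"ormander does not.

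There is, however, a concrete flaw in the step that feeds the point into the extension theorem. The section $\sigma_x=\chi\cdot(z-z(x))/(2r_0)$ is not holomorphic (the cut-off destroys holomorphy), so it cannot be used in the Ohsawa--Takegoshi--Manivel formulation where the Jacobian factor $|\Lambda^n d\sigma|$ appears; moreover $\sigma_x$ vanishes identically outside the chart, so $\log|\sigma_x|^2\equiv-\infty$ there, and on the annulus where $\chi$ dies the negative part of $dd^c\log|\sigma_x|^2$ is governed by $dd^c\log\chi^2$, which is not uniformly bounded --- so the claimed inequality $n\,dd^c\log|\sigma_x|^2\ge -C_3\omega$ fails as written. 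The standard repair is to cut off the logarithm rather than the section: set $v_x:=n\,\chi\,\log\bigl(|z-z(x)|^2/(2r_0)^2\bigr)$, a quasi-psh function on $X$, uniformly in $x$, with $dd^c v_x\ge -C_3\omega$ and an isolated logarithmic pole of order $n$ at $x$; then apply the twisted $L^2$/OT estimate (or a ready-made uniform ``extension from a point'' statement, as in Berman's paper or Demailly) to the weight $h_{M_k}+v_x$, and the rest of your bookkeeping goes through verbatim. Separately, your parenthetical that there is ``nothing to prove'' when $k\phi_{L'}(x)+\phi_{F'}(x)=-\infty$ is too quick: the extension property still demands the existence of some $s\in H^0(kL'\otimes F')$ with $s(x)=\zeta$, and this can fail at such polar points (e.g.\ when the pole lies along a curve contained in the base locus of $kL'\otimes F'$). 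This is really a looseness in the statement itself, and it is harmless for the paper --- the extension property is only ever invoked at points where the metrics are finite, as in Proposition \ref{prop:modify} away from $\mathbb B_+(L-F)\cup\Sing(\psi)$ --- but your proof should either restrict to such points or not claim the degenerate case is automatic.
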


This well-known statement is a consequence of the more general result proved in \cite[Proposition 12.4]{Demailly}

\section{Maximal Envelopes}\label{sec:envelopes}
Fix a complex manifold $X$ (which we shall assume is compact unless indicated otherwise) along with line bundles $L$ and $F$.  Let $\phi$ be a (not necessarily positive) continuous metric on $L$ and pick $\psi\in \PSH(F)$.   Fix also a smooth metric $\phi_F$ on $F$ and define
\[\psi' = \phi - \phi_F + \psi.\]
\begin{definition}
Let $P_{\psi'}$ be the envelope
  \begin{equation}
P_{\psi'}\phi:=  \sup\{\gamma \leq \min\{\phi,\psi'\}, \gamma\in \PSH(L)\},\label{eq:envelope}
\end{equation}
and define  \[P_{[\psi]}\phi:=\lim_{C\to
    \infty}P_{\psi'+C}\phi=\sup\{\gamma \leq \phi, \gamma \le \psi' + O(1),
  \gamma\in \PSH(L)\}.\]
\end{definition}

The notation is justified by the observation $P_{[\psi]}\phi$ is independent of the choice of $\phi_F$ (because the latter  is globally bounded) and thus depends only on the singularity type $[\psi]$.    Since $\min\{\phi,\psi'\}$ is upper semicontinuous, it follows
that the upper semicontinuous regularisation of $P_{\psi'}\phi$ is
still less than $\min\{\phi,\psi'\},$ and thus $P_{\psi'}\phi\in
\PSH(L).$   Hence
$P_{\psi'}(P_{\psi'}\phi)=P_{\psi'}\phi,$ i.e.\ $P_{\psi}$ a projection operator to
$\PSH(L).$   Clearly $P_{\psi}\phi$ is monotone with respect to both $\psi$ and
$\phi$.   We shall always assume that $L$ is pseudoeffective, otherwise $\PSH(L)$ will be empty and the above envelopes
will be identically $-\infty$.

\begin{definition}\label{definitionmaximal}
  The \emph{maximal envelope} of $\phi$ with respect to the
  singularity type $[\psi]$ is 
\[\phi_{[\psi]} := (P_{[\psi]}\phi)^*\]
where the star denotes the upper-semicontinuous regularisation.      The \emph{equilibrium set} associated to $\phi$ and $\psi$ is
\[ D= D(\phi,\psi) = \{ x \in X : \phi_{[\psi]}(x) = \phi(x)\}\]
\end{definition}

Clearly then $\phi_{[\psi]}\in \PSH(L)$ and $\phi_{[\psi]}\le \phi$.

\begin{example}[Trivial Singularities]\label{example:locallybounded}
If $\psi$ is locally bounded then $\phi\le \psi'+C$ for $C$ sufficiently large, and thus
\[\phi_{[\psi]} = P_{\psi'+C} \phi = \sup\{ \gamma\le \phi : \gamma\in \PSH(L)\}.\]
These are exactly the envelopes considered by Berman in \cite{Berman}.
\end{example}

\begin{remark}
  In the locally bounded case, maximal envelopes are examples of metrics with minimal singularities in that if $\gamma$ is any other positive metric on $L$ then $\gamma \le \phi_{[\psi]} + O(1)$.    
\end{remark}

\begin{example}[Divisorial singularities]
  Suppose $D$ is a smooth divisor in $X$ and $F= \mathcal O_X(D)$ with singular metric $\psi = \ln |s_D|^2$ where $s_D$ is the defining function for $D$.  Then
\[\phi_{[\psi]}=\sup \{\gamma \le \phi : \gamma\in PSH(L), \nu_D(\gamma)\ge 1\}^*\]where $\nu_D$ denotes the Lelong number along $D$.  This case is considered by Berman \cite[Sec.\ 4]{Berman1}.
\end{example}

\begin{example}[Pluricomplex Green Function]
For non-compact $X$, taking $L$ to be the trivial bundle and $\phi=0$ the trivial metric, the maximal envelope becomes the pluricomplex Green function on $X$.  When $\psi$ has analytic singularities this has been studied by Rashkovskii-Sigurdsson \cite{Rashkovskii}.  We remark in passing that the pluricomplex Green function on compact manifolds with boundary has recently been considered by Phong-Sturm \cite{Phong5}, but is more commonly studied on domains in $\mathbb C^n$ along with a boundary condition, for which it has a long and rich history (see \cite{Bracci} and the references therein).
\end{example}

\begin{remark}[Invariance under holomorphic automorphisms]
  If $\phi$ and $\psi$ are invariant under some group $G$ of holomorphic automorphism of $(X,L)$ then the same is true of $P_{\psi'}\phi, P_{[\psi]}$ and $\phi_{[\psi]}$.  The proof is immediate, for if $\theta$ is such an automorphism then $P_{\psi'}\phi\circ \theta\in PSH(L)$ and is bounded by $\min\{\phi,\psi'\}$ and thus also bounded by $P_{\psi'}\phi$.  Applying to the inverse of $\theta$ then yields $P_{\psi'} \phi \circ \theta = P_{\psi'}\phi$.  Thus there is no loss in replacing the envelope in \eqref{eq:envelope} with those $\gamma$ that are invariant under $G$.
\end{remark}

\begin{example}[Toric metrics]\label{example:toric}
Consider now the case of a toric variety $X$ with torus invariant $L$ which we assume is ample.  Let $\Delta$ be the associated Delzant polytope in $\mathbb R^n$.  Letting $z_i$ be complex coordinates on the large torus in $X$, any hermitian metric $\phi$ on $L$ descends to a convex function on $\mathbb R^n$ after the change of variables $x_i = \ln |z_i|^2$ which by abuse of notation we denote by $\phi(x)$.  Moreover $\phi$ is positive if and only if $\phi(x)$ is convex.  Thus if $\psi$ is locally bounded (and so irrelevant) then $\phi_{[\psi]}(x)$ is simply the convex hull the graph of $\phi(x)$  (see \cite[5.2]{Berman}).   

Suppose instead we have $\psi = \lambda \ln |z_1|^2$ for some fixed $\lambda>0$.  Then $\phi_{[\psi]} (x)$ is the supremum of all convex functions $\gamma$ on $\mathbb R^n$  bounded by $\phi$ such that for $x_1\gg 0$ we have $\gamma\le \lambda x_1+C$ for some  $C$.  
\end{example}

\begin{remark}\label{rmk:darvas}
In the first version of this paper it was noted that it is not obvious if the maximal envelope $\phi_{[\psi]}$ has the same singularity type as $\psi$.  When $\psi$ has analytic singularities this can be shown rather easily by passing to a smooth resolution (see \cite{Rashkovskii} which also contains an alternative proof).  This topic has since been taken up by Darvas \cite[Theorem 3]{Darvas} who shows that in general this is not the case, and gives an interesting criterion for it to hold in terms of a certain natural class $\mathcal E(X,L)$ of positive metrics on $L$. 
\end{remark}

\section{Extension of results of Berman}

In this section we extend some results of Berman to the maximal envelopes considered in this paper.  What follows is essentially due to Berman, which in turn is based on the work of Bedford-Taylor \cite{Bedford3}.  The exposition here follows closely \cite{Berman} which in fact announces that such an extension should hold \cite[Sec 1.3]{Berman}.  The related work \cite[Sec 4]{Berman1} deals with the case of envelopes that appear from order of vanishing along a divisor and \cite{Hisamoto} proves related results in the case of general graded linear series.

\subsection{Logarithmically homogeneous plurisubharmonic functions}

We first describe a general framework which allows us to pass from  metrics over a compact space to metrics over an auxiliary non-compact space.    Recalling that $L$ is a line bundle over $X$, let $Y$ be the total space of the dual bundle $L^*$ and  $\pi\colon Y\to X$ be the projection.

Consider $Y$ as a subset of the compactification $\overline{Y}:= \mathbb P(L^*\oplus \underline{\mathbb C})$ where $\underline{\mathbb C}$ denotes the trivial line bundle over $X$.  Then over $\overline{Y}$ the hyperplane line bundle $\mathcal O_{\overline{Y}}(1)$ has a section $s\in H^0(\overline{Y}, \mathcal O_{\overline{Y}}(1))$ given by the constant section on the factor $\underline{\mathbb C}$.    We let
$$ \zeta : = \ln |s|^2 \in PSH(\mathcal O_{\overline{Y}}(1))$$
which is well-defined up to the addition of a constant.   We write $\mathcal O_Y(1)$ for the restriction of $\mathcal O_{\overline{Y}}(1)$ to $Y$, and denote the restriction of $\zeta$ to $Y$ by the same letter.
Thus, concretely, if $w$ is a local coordinate on the fibre direction of $Y$, then  $s$ is given locally by the equation $w=0$ and so
$$ \zeta = \ln |w|^2.$$
Finally, to any metric $\gamma$ on $L$ we define
\[\widehat{\gamma} = \pi^* \gamma + \zeta. \]

\begin{definition}
The set of \emph{logarithmically homogeneous plurisubharmonic functions} on $Y$ is defined to be
  \begin{equation}
\PSH_h(Y) := \{ \chi \in PSH(\pi^*L\otimes \mathcal O_Y(1)) : \chi(\lambda y ) = \ln |\lambda|^2 + \chi(y) : \lambda \in \mathbb C^*\},\label{eq:pshh}
\end{equation}
where the multiplication by $\mathbb C^*$ is taken in the fibre direction of $Y$.

\end{definition}
Thus the map $\gamma\mapsto \widehat{\gamma}$ gives a bijection between $PSH(L)$ and $\PSH_h(Y)$.  Moreover this bijection respects taking envelopes, which we make precise in the following lemma.  Define $P_{\widehat{\psi'}}\widehat{\phi}$ and $P_{[\widehat{\psi}]}\widehat{\phi}$ and $\widehat{\phi}_{[\widehat{\psi}]}$ exactly as in Section \ref{sec:envelopes}, where now the supremum in \eqref{eq:envelope} is taken over all $\gamma$ in $PSH_h(Y)$.

\begin{lemma}\label{lemma:hatrespectenvelopes}
We have $\widehat{P_{\psi'}\phi} = P_{\widehat{\psi'}}\widehat{\phi}$ and 
$\widehat{P_{[\psi]}\phi} = P_{[\widehat{\psi}]}\widehat{\phi}$ and
$\widehat{\phi_{[\psi]}} = \widehat{\phi}_{[\widehat{\psi}]}$.
\end{lemma}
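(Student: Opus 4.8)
The plan is to prove the three claimed identities by exploiting the bijection $\gamma \mapsto \widehat{\gamma} = \pi^*\gamma + \zeta$ between $\PSH(L)$ and $\PSH_h(Y)$, together with the fact that this bijection is order-preserving (both ways) and interacts correctly with the upper semicontinuous regularisation. The key structural observations I would set up first are: (i) $\gamma \le \gamma'$ on $X$ if and only if $\widehat{\gamma} \le \widehat{\gamma'}$ on $Y$, since the two differ from each other by $\pi^*(\gamma'-\gamma) \ge 0$; (ii) $\widehat{\phi}$ makes sense even for the merely continuous (non-positive) metric $\phi$, and $\widehat{\min\{\phi,\psi'\}} = \min\{\widehat{\phi}, \widehat{\psi'}\}$ because adding $\pi^*(\cdot)+\zeta$ commutes with pointwise minimum; and (iii) for a family $\{\gamma_i\}$ in $\PSH(L)$ uniformly bounded above, $\widehat{(\sup_i \gamma_i)^*} = (\sup_i \widehat{\gamma_i})^*$ — this is where a small argument is needed, since one must check that the fibrewise-homogeneous regularisation on $Y$ agrees with the pullback of the regularisation on $X$.

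For the first identity $\widehat{P_{\psi'}\phi} = P_{\widehat{\psi'}}\widehat{\phi}$: by definition $P_{\psi'}\phi = \sup\{\gamma \le \min\{\phi,\psi'\} : \gamma \in \PSH(L)\}$, and applying the hat to each competitor $\gamma$ gives, via (i) and (ii), exactly a competitor $\widehat{\gamma} \le \min\{\widehat{\phi},\widehat{\psi'}\}$ in $\PSH_h(Y)$; conversely, every element of $\PSH_h(Y)$ bounded by $\min\{\widehat{\phi},\widehat{\psi'}\}$ is of the form $\widehat{\gamma}$ for a unique $\gamma \in \PSH(L)$, and that $\gamma$ satisfies $\gamma \le \min\{\phi,\psi'\}$. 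Thus the two supremum-families correspond bijectively under the hat, so their suprema correspond (taking the sup fibrewise on $Y$ is compatible with pulling back from $X$, since all competitors are $\mathbb C^*$-homogeneous). Note $P_{\psi'}\phi$ is defined without regularisation in the displayed formula \eqref{eq:envelope}, but since $\min\{\phi,\psi'\}$ is upper semicontinuous the supremum is already u.s.c.\ and equals its regularisation, so there is no subtlety here; the same holds on $Y$.

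For the second identity, $\widehat{P_{[\psi]}\phi} = P_{[\widehat{\psi}]}\widehat{\phi}$: recall $P_{[\psi]}\phi = \lim_{C\to\infty} P_{\psi'+C}\phi$, an increasing limit, and likewise on $Y$. Since $\widehat{\psi' + C} = \widehat{\psi'} + C$, the first identity applied at each level $C$ gives $\widehat{P_{\psi'+C}\phi} = P_{\widehat{\psi'}+C}\widehat{\phi}$, and the hat commutes with increasing limits because $\pi^*(\cdot) + \zeta$ is a pointwise-continuous operation on the monotone sequence; hence passing to the limit yields the claim. (One should note $\widehat{\psi'} = \pi^*(\phi - \phi_F + \psi) + \zeta$ plays the role of the auxiliary metric in the definition of $P_{[\widehat\psi]}\widehat\phi$, consistent with the convention that $\widehat{\psi}$ determines the singularity type on $Y$.) Finally, the third identity $\widehat{\phi_{[\psi]}} = \widehat{\phi}_{[\widehat{\psi}]}$ follows by applying the regularisation compatibility (iii) to the second identity: $\phi_{[\psi]} = (P_{[\psi]}\phi)^*$ and $\widehat{\phi}_{[\widehat\psi]} = (P_{[\widehat\psi]}\widehat\phi)^*$, so it remains to see $\widehat{u^*} = (\widehat{u})^*$ for $u = P_{[\psi]}\phi$.

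The main obstacle is precisely point (iii): verifying that upper semicontinuous regularisation commutes with the hat construction. The content is that regularising $\widehat{u} = \pi^*u + \zeta$ on the total space $Y$ — where the regularisation must still land in $\PSH_h(Y)$, i.e.\ remain $\mathbb C^*$-homogeneous in the fibre — produces exactly $\pi^*(u^*) + \zeta$. This should follow because $\zeta$ is already continuous (indeed pluriharmonic) on the complement of the zero section and $\pi$ is a submersion, so $(\pi^*u + \zeta)^* = \pi^*(u^*) + \zeta$ off the zero section, and homogeneity forces the value on the zero section; one must also check the result is genuinely in $\PSH_h(Y)$, which is where the homogeneity constraint in \eqref{eq:pshh} is used, together with the fact (already recorded in the Preliminaries) that an increasing-limit/supremum of positive metrics has positive u.s.c.\ regularisation. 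I would isolate this as a short preliminary sublemma and then deduce all three identities mechanically.
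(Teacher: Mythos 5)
Your proposal is correct and follows essentially the same route as the paper: establish the order-preserving bijection $\gamma\mapsto\widehat{\gamma}$ between competitors to get the first identity, pass to the increasing limit in $C$ (using $\widehat{\psi'+C}=\widehat{\psi'}+C$) for the second, and deduce the third by noting that upper semicontinuous regularisation commutes with adding the continuous fibre term $\zeta$ (the paper phrases your point (iii) as the local fact $(f(w)+g(z))^*=f^*(w)+g^*(z)$). No gaps to report.
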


\begin{proof}
For $\phi,\psi'\in PSH(L)$
  \begin{eqnarray*}
    \widehat{P_{\psi'}\phi} &=& \pi^* P_{\psi'}\phi+ \zeta=\sup_{\gamma\in\PSH(L)}\{\pi^*\gamma + \zeta : \gamma\le \min\{\phi,\psi'\}\}\\
&=&\sup_{\widehat{\gamma}\in \PSH_h(Y)}\{\widehat{\gamma} : \widehat{\gamma} \le \min\{\widehat{\phi},\widehat{\psi'}\}\}
  \end{eqnarray*}
which gives the first identity.  Moreover 
\begin{eqnarray*}
  \widehat{P_{[\psi]}\phi} &=& \zeta + \pi^* \lim_{C\to \infty} P_{\psi'+C} \phi = \lim_{C\to \infty}( \pi^* P_{\psi'+C} \phi+\zeta)\\
   &=& \lim_{C\to \infty} \widehat{P_{\psi'+C}\phi}= \lim_{C\to \infty} P_{\widehat{\psi'}+C}\widehat{\phi} = P_{[\widehat{\psi}]}\widehat{\phi}
\end{eqnarray*}
where the penultimate equality uses part (a) and $\widehat{\psi+C} = \widehat{\psi} +C$.  Finally
\begin{eqnarray*}
  (P_{[\widehat{\psi}]} \widehat{\phi})^* &=& (\widehat{P_{[\psi]}\phi})^*= (\pi^*(P_{[\psi]}\phi) +\zeta) ^*\\
&=& \pi^*(P_{[\psi]}\phi^*)+\zeta= \phi_{[\psi]}= \widehat{\phi_{[\psi]}}+\zeta
\end{eqnarray*}
where the third inequality uses the fact that $\zeta$ is upper-semicontinuous and the local (and elementary) fact that $(f(w) + g(z))^* = f^*(w) + g^*(z)$.
\end{proof}

\subsection{Exponential holomorphic coordinates}

Now fix a continuous metric $\phi$ on $L$.    We choose a smooth metric $\phi_{F}$ on $F$ and set $\psi':=\phi-\phi_{F}+\psi$.   To ease notation let
\[ \tau_C := P_{\psi'+C} \phi\]
so by definition
\[ \phi_{[\psi]} = (\lim_{C\to \infty} \tau_C)^*.\]
Our aim is to show regularity around a fixed point
\[x_0 \in X-\mathbb B_+(L-F)\cup \Sing(\psi).\]
From the assumption that $L-F$ is big there is, for $k\gg 0$, a Kodaira decomposition $k(L-F) = A+E$ where $A$ is ample and $E$ is an effective divisor in $X$.  Since $\mathbb B_+(L-F)$ is the intersection over all such $E$ as $k$ varies, we can arrange so that $x_0\notin E$.    Furthermore there exists a positive metric $\phi_+$ on $L-F$ of the form
\[\phi_+ = k^{-1}(\phi_{A} + \ln |s_{E}|^2),\]
where $\phi_{A}$ is a smooth positive metric on $A$ and $s_{E}$ is the defining section of $E$.   

Observe that for any $k\ge 1$.
\begin{eqnarray}\label{rescaleenvelopes}
  P_{k\psi'} (k\phi) = k P_{\psi'}\phi \quad\text{and}\quad (k\phi)_{[k\psi]} = k \phi_{[\psi]}.
\end{eqnarray}
Thus by scaling $L$, $\phi$ and $\psi$ we may assume without loss of generality that $k=1$.  Moreover since $\psi$ was assumed to be exponentially H\"older continuous, $k\psi$ is exponentially Lipschitz for sufficiently large $k$.  Thus there is no loss in assuming $\psi$ is exponentially Lipschitz.  Furthermore, by subtracting a constant from $\phi_+$ if necessary we can also arrange
\begin{equation}
  \phi_+ + \psi \le \tau_C\text{ for all } C\ge 0.\label{eq:boundonphi+}
\end{equation}

Now let
\[ Y_0 = Y - (j(X) \cup \pi^{-1}( E \cup \Sing(\psi))\]
where $j\colon X\to Y$ is the inclusion of $X$ as the zero section, and define a disc bundle in $Y$ by
\[U=\{\widehat{\phi_{+}}+\pi^*\psi \le  1\}.\]
To show regularity of $\phi_{[\psi]}$ near $x_0$ it is, by Lemma \ref{lemma:hatrespectenvelopes}, sufficient to show regularity of $\widehat{\phi}_{[\widehat{\psi}]}$ near a chosen point in the fibre $\pi^{-1}(x_0)$.  To this end pick such a point, 
\[y_0 \in \pi^{-1}(x_0)\cap U - j(X),\]
so we have $y_0\in U\cap Y_0$.   The method of proof now follows the original approach of Bedford-Taylor, using particular holomorphic coordinates around $y_0$ that arise from holomorphic vector fields constructed in the next lemma (in fact this is the reason for passing to the auxiliary space $Y$ since, in general, $X$ may have no holomorphic vector fields at all).

\begin{lemma}\label{lem:existencevectorfields}\
There exist global holomorphic vector fields $V_1,\ldots,V_{n+1}$ on $Y$ whose restriction at $y_0$ span  $T_{y_0}Y$.   Moreover, given any positive integer $m$ there is a constant $C_m$ so that these vector fields can be made to satisfy
  \begin{equation}
 |V_i(z,w)|^2 \le C_m\min\{ |w|^{2m}, |s_{E}(z)|^{2m} e^{m\psi(z)}\}\text{ on }U.\label{eq:estimatevectorfield}
\end{equation}

\end{lemma}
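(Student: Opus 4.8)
The plan is to produce the vector fields $V_1,\dots,V_{n+1}$ as global holomorphic sections of $TY$ twisted so as to vanish to high order along the locus we want to avoid, and then to use the ampleness in the fibre direction coming from $\widehat{\phi_+}$ to obtain enough such sections that their values at $y_0$ span $T_{y_0}Y$. Concretely, observe that the tangent bundle $TY$ restricted to a neighbourhood of $\pi^{-1}(x_0)$ splits (non-canonically) into the relative tangent bundle $T_{Y/X}$, which near $y_0$ is trivialised by $w\,\partial_w$, and a lift of $\pi^* TX$. Since $TX$ need not be generated by global sections, the point of moving to $Y$ is that we may instead tensor with a very positive line bundle and use that the extra positivity is harmless for the span at a single point: the vector fields need only span at $y_0$, not globally.

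The key steps, in order, would be the following. First, fix the Kodaira-type data $m(L-F) = A + E$ with $x_0\notin E$ and the metric $\phi_+ = m^{-1}(\phi_A + \ln|s_E|^2)$ on $L-F$ as already set up in the text, so that the disc bundle $U = \{\widehat{\phi_+} + \pi^*\psi \le 1\}$ makes sense and $y_0 \in U \cap Y_0$. Second, for a given target order $m$ (of vanishing), consider sections of $TY \otimes \mathcal O_Y(-N)$ — equivalently, holomorphic vector fields on $Y$ with a prescribed pole/zero behaviour controlled by powers of $w$ — together with a twist by a high power of $\pi^*(A + E)$; using the explicit local frame $w\partial_w$ in the fibre direction and coordinate vector fields on $X$ near $x_0$, check that multiplying a local frame of $TY$ near $y_0$ by $w^m \cdot \pi^*(s_E^m)$ (or rather by a global section realising this) produces candidate vector fields satisfying the estimate $|V_i|^2 \le C_m \min\{|w|^{2m}, |s_E(z)|^{2m} e^{m\psi(z)}\}$ on $U$. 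Here the two terms in the $\min$ come from the two quantities whose product (up to the smooth factor $e^{\phi - \phi_F}$) is bounded by $e^{\widehat{\phi_+} + \pi^*\psi} \le e$ on $U$: on $U$ one has $|w|^2 \lesssim |s_E(z)|^{2}e^{\psi(z)}$ and symmetrically, so a bound by one of the factors to the $m$-th power can be traded for a bound by the other, and taking the minimum is then automatic. Third, to arrange that these sections actually span $T_{y_0}Y$: apply a Serre-vanishing/global generation argument — after twisting by a sufficiently ample line bundle pulled back from $X$ together with a sufficiently positive power of $\mathcal O_Y(1)$ (the positivity of $\widehat{\phi_+}$ in the fibre direction), the sheaf $TY$ with the prescribed vanishing along $j(X) \cup \pi^{-1}(E\cup\Sing\psi)$ is globally generated at $y_0$, because $y_0$ lies off all of those loci. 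One then picks $V_1,\dots,V_{n+1}$ among the global sections whose values at $y_0$ form a basis of $T_{y_0}Y$, and rescales each by a constant to absorb $C_m$ uniformly.

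I expect the main obstacle to be making the two requirements — spanning at $y_0$ and the quantitative estimate \eqref{eq:estimatevectorfield} on all of $U$ — compatible with a single choice of twist. Twisting by more positivity helps with global generation but one must ensure the chosen sections still vanish to order exactly $m$ along $\pi^{-1}(E)$ and along the zero section, and that the comparison of $|w|^2$ with $|s_E|^2 e^{\psi}$ that holds on $U$ is genuinely two-sided (which is exactly the content of $U$ being the sublevel set $\{\widehat{\phi_+}+\pi^*\psi\le 1\}$ together with, implicitly, $\widehat{\phi_+}+\pi^*\psi$ being bounded below on the relevant region after shrinking). A secondary technical point is that $Y$ is non-compact, so the "Serre vanishing" step should really be phrased as global generation over a fixed relatively compact neighbourhood of $\pi^{-1}(x_0)$ inside $U$, or one works on a suitable compactification of $Y$ and checks the constructed sections extend; either way the argument is local near $x_0$ in $X$ and the non-compactness only enters through the coordinate $w$, which is precisely what the powers of $w$ in the estimate keep under control.
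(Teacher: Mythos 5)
There is a genuine gap, and it sits exactly at the point you flag as the ``key'' mechanism for the $\min$ in \eqref{eq:estimatevectorfield}. On $U=\{\widehat{\phi_+}+\pi^*\psi\le 1\}$ the only thing that is controlled is the \emph{product}: since $\widehat{\phi_+}+\pi^*\psi=\ln\bigl(|w|^2|s_E|^2e^{\psi}\bigr)+\phi_A$, membership in $U$ says $|w|^2|s_E|^2e^{\psi}\le C$. This does \emph{not} give the two-sided comparison ``$|w|^2\lesssim |s_E|^2e^{\psi}$ and symmetrically'' that you invoke: near the zero section $j(X)$ one has $|w|^2$ arbitrarily small while $|s_E|^2e^{\psi}$ stays bounded away from $0$, and over points of $E\cup\Sing(\psi)$ the whole fibre lies in $U$, so $|w|$ is unbounded there; what is true is only $|w|^2\lesssim (|s_E|^2e^{\psi})^{-1}$. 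Consequently your candidate fields, obtained by multiplying a frame of $TY$ by $w^m\pi^*(s_E^m)$, satisfy only $|V_i|^2\le C|w|^{2m}|s_E|^{2m}$, and on $U$ near $\pi^{-1}(\Sing(\psi)\setminus E)$ (say at points with $|w|=1$, which lie in $U$ once $e^{\psi}$ is small) this is bounded below away from zero, whereas \eqref{eq:estimatevectorfield} demands $|V_i|^2\le C|s_E|^{2m}e^{m\psi}\to 0$ there. That half of the estimate is not decorative: it is exactly what Lemma \ref{lem:flowexists} and Lemma \ref{lem:boundlambdafunction} later need (decay of $V_i$ against $e^{\psi}$, not merely against $s_E$).

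The second, related, problem is that the missing factor $e^{m\psi}$ cannot be manufactured by ``prescribed vanishing along $j(X)\cup\pi^{-1}(E\cup\Sing(\psi))$'' plus Serre-type global generation: in this section $\psi$ is only assumed exponentially H\"older/Lipschitz continuous, so $\Sing(\psi)$ carries no ideal-sheaf structure to twist by, and global generation of a coherent sheaf with a smooth twist will never force sections to decay like $e^{m\psi/2}$. The paper builds the decay into a \emph{singular weight} instead: on the compactification $\overline Y=\mathbb P(L^*\oplus\underline{\mathbb C})$ it equips $\overline L=\pi^*L^k\otimes\mathcal O_{\overline Y}(1)$ with $\overline\phi=\pi^*k\phi_{+,k}+\ln(1+e^{\widehat\phi})$, where $\phi_{+,k}$ contains $(1+k^{-1/2})(\ln|s_E|^2+\psi)$, and uses the uniform Ohsawa--Takegoshi statement (Theorem \ref{thm:ohsawa}) to generate $T\overline Y\otimes\overline L^{\,l}$ at $y_0$ by sections $W_i$ bounded in this singular norm; setting $V_i=W_iw^{kl}$, the bound $|W_i|^2\le Ce^{l\overline\phi}$ yields high powers of both $|w|^2$ and $|s_E|^2e^{\psi}$ simultaneously, and only then does boundedness of the product on $U$ allow one to trade powers and extract the $\min$ in \eqref{eq:estimatevectorfield}. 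So your outline can be repaired, but it requires (i) replacing divisorial vanishing along $\Sing(\psi)$ by an estimate against a singular metric whose weight contains a multiple of $\psi$ strictly larger than the amount you want left over, with the spanning at $y_0$ coming from an $L^2$-extension/OT-type statement valid for such singular weights, and (ii) replacing the symmetric comparison on $U$ by the correct statement that only $|w|^2|s_E|^2e^{\psi}$ is bounded there.
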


\begin{proof}
The space $Y$ has the compactification $\overline{Y} = \mathbb P(L^*\oplus\underline{\mathbb C})$, and we shall denote the tautological bundle by $\mathcal O_{\overline{Y}}(-1)$.   Fix a continuous  metric $\phi_{E}$ on $E$ and consider the metric  on $L$ given by
\[\phi_{+,k} = \phi_{A} + (1 + k^{-1/2})( \ln |s_{E}|^2+\psi ) - k^{-1/2}\sigma\]where $\sigma$ is a fixed choice of smooth metric on $E + F$. Observe that for $k\gg 0$ we will have $dd^c \phi_{+,k}\ge(1/2)( dd^c \phi_+)$ since $\psi$ is positive.  

Now define a metric on  the line bundle $\overline{L} = \pi^* L^{k} \otimes \mathcal O_{\overline{Y}}(1)$ over $Y$ by
\[\overline{\phi}: = \pi^* k \phi_{+,k} + \ln ( 1 + e^{\widehat{\phi}})\]
which extends to a metric over all of $\overline{Y}$ which has strictly positive curvature for $k$ sufficiently large.  Then by the Ohsawa-Takegoshi Theorem \eqref{thm:ohsawa} the vector bundle $T\overline{Y}\otimes \overline{L}^{l}$ is globally generated for $l\gg 0$.  Fix such an $l$ with $kl\ge m$.   Thus there are sections $W_1,\ldots,W_{n+1}$ whose evaluation at $y_0$ span $TY_{y_0}\overline{L}^{l}$.   Thinking of $w$ as the tautological section of $\pi^*L^*$ we let $V_i$ be the restriction of $W_i w^{kl}$ to $Y$.  Thus the $V_i$ are vector fields on $Y$ and whose evaluation at $y_0$ span $T_{y_0}Y$.

Now the vector fields $W_i$ are holomorphic, so have bounded supremum norm over $\bar{Y}$ and so also over $Y$.  Thus locally on a fixed neighbourhood in $Y$ we have
\begin{eqnarray*}
|V_i(z,w)|^2 &\le& C_1 |w|^{2kl} e^{l\overline{\phi}}= C_1|w|^{kl} e^{kl\phi_{+,k}} e^{l\hat{\phi}} \\
&\le&C_2|w|^{2kl}((|s_{E}|^2e^{\psi})^{k(1+k^{-1/2})}|w|^2)^l\\
&\le& C_3 (|w|^2 |s_{E}|^2 e^{\psi})^{(k+1)l} |s_{E}|^{2m} e^{m\psi}
\end{eqnarray*}
which yields the required bound on $|V_i|$ since $\widehat{\phi_+} +\pi^*\psi = \ln (|w|^2|s_{E}|^2e^{\psi}) + \phi_{A}$, so $\|w\|^2\|s_{E}|^2e^{\psi}$ is bounded on $U= \{\widehat{\phi_+} +\pi^*\psi\le 1\}$. 
\end{proof}

\begin{remark}
It is the hypothesis that $L-F$ is big that allows for the fact that the conclusion of the previous Lemma is stronger than \cite[Lemma 3.6]{Berman} on which it is based.
\end{remark}

\begin{lemma}\label{lemma:flowexists}
  Let $V$ be a smooth vector field on a manifold $Y$, and $Y_0\subset Y$ be such that $V=0$ on $Y-Y_0$.  Suppose $U\subset Y$ is a closed subset, and  $\alpha\colon Y_0\to Y_0$ is a diffeomorphism.

$W:=\alpha(U\cap Y_0)$ is relatively compact and so that the vector field $\alpha_* V$ can be extended to a vector field on $\overline{W}$ that vanishes on the boundary $\overline{W}-W$.    Then there exists a $t_0$ such that the flow $\exp(tV)(y)$ exists for all $|t|\le t_0$ and all $y\in U$.
\end{lemma}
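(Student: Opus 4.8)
The plan is to reduce the existence of the flow to a standard compactness argument, using the hypothesis that $\alpha_*V$ extends to a vector field on the compact set $\overline{W}$ vanishing on $\overline{W}-W$. First I would transport the problem to $\overline{W}$ via $\alpha$: since $\alpha\colon Y_0\to Y_0$ is a diffeomorphism and $V$ vanishes outside $Y_0$, the only points whose $V$-orbit we need to control are those in $U\cap Y_0$, and their behaviour is conjugate via $\alpha$ to the flow of $\widetilde V := \alpha_*V$ on the compact manifold-with-boundary $\overline{W}$. Concretely, for $y\in U\cap Y_0$ one has $\exp(tV)(y) = \alpha^{-1}\big(\exp(t\widetilde V)(\alpha(y))\big)$ whenever the right-hand side is defined, and for $y\in U-Y_0$ one has $V(y)=0$ so $\exp(tV)(y)=y$ for all $t$; thus it suffices to produce a uniform time $t_0$ for the flow of $\widetilde V$ starting anywhere in $\overline{W}$.

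Next I would invoke the standard existence theory for ODEs on a compact set. Since $\widetilde V$ is a smooth (in particular locally Lipschitz) vector field on $\overline{W}$ which is compact, and since $\widetilde V$ vanishes identically on the boundary $\overline{W}-W$, the boundary consists of fixed points; hence no integral curve starting in $\overline{W}$ can escape $\overline{W}$ in finite time (an orbit reaching the boundary would have to stay there by uniqueness, and orbits in the interior stay in the interior). By the usual argument — cover $\overline{W}$ by finitely many coordinate charts in which $\widetilde V$ has a uniform Lipschitz constant and bound, apply Picard--Lindel\"of in each, and take $t_0$ to be the minimum of the resulting existence times — the flow $\exp(t\widetilde V)$ exists on all of $\overline{W}$ for $|t|\le t_0$. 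Pulling back by $\alpha$ gives the flow $\exp(tV)(y)$ for all $y\in U$ and $|t|\le t_0$, as required.

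The only real subtlety, and the point I would be most careful about, is the claim that integral curves do not leave $\overline{W}$: this is exactly where the hypothesis that $\alpha_*V$ extends to $\overline{W}$ and vanishes on $\overline{W}-W$ is used, and it is what prevents the orbit of a point near the boundary from exiting the compact set before time $t_0$. Everything else is routine ODE theory together with the bookkeeping of conjugating the flow by $\alpha$ and treating the region $U-Y_0$ (where $V\equiv 0$) separately; I would state these steps but not belabour the estimates.
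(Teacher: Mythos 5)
Your proposal is correct and follows essentially the same route as the paper: extend $\alpha_*V$ to $\overline{W}$, use compactness to get a uniform existence time $t_0$, use the vanishing of the extension on $\overline{W}-W$ (plus uniqueness) to keep orbits starting in $W$ inside $W$, conjugate back by $\alpha$, and treat points of $U-Y_0$ as fixed points of $V$. The paper's proof is just a more condensed version of the same argument.
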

\begin{proof}
Denote the extension of $\alpha_*V$ by $\tilde{V}$.  By compactness of $\overline{W}$ there is a $t_0$ such that flow of $\tilde{V}$ exists for any $|t|\le t_0$ and initial point $y\in \overline{W}$.  Since $\tilde{V}$ vanishes on $\overline{W}-W$, we have that if $y\in W$ then this flow remains completely within $W$.  However this is precisely the image (under $\alpha$) of the flow of $V$, which proves that the flow of $V$ exists for all $|t|\le t_0$ and $y\in U\cap Y_0$.  Finally if $y\in U- Y_0$ then $y$ is fixed by the flow of $V$, and thus the flow exists for all time in this case as well.
\end{proof}

\begin{lemma}\label{lem:flowexists}
There exists a $t_0>0$ such that the flow $y\mapsto \exp((\sum_i \lambda_i V_i)y)$ exists for all $(\lambda,y)$ such that $|\lambda|\le t_0$ and $y\in U$. 
\end{lemma}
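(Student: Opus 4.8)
The plan is to apply the abstract flow-existence criterion of Lemma \ref{lemma:flowexists} to the single vector field $V=\sum_i\lambda_iV_i$, with the roles of $Y_0$ and $U$ as already fixed: $Y_0=Y-(j(X)\cup\pi^{-1}(E\cup\Sing(\psi)))$ and $U=\{\widehat{\phi_+}+\pi^*\psi\le 1\}$ the disc bundle. First I would observe that each $V_i$ vanishes on $Y-Y_0$: indeed, the estimate \eqref{eq:estimatevectorfield} bounds $|V_i|^2$ by $C_m\min\{|w|^{2m},|s_E(z)|^{2m}e^{m\psi(z)}\}$ on $U$, and this upper bound goes to $0$ along the zero section $j(X)$ (where $w=0$), along $\pi^{-1}(E)$ (where $s_E=0$), and along $\pi^{-1}(\Sing(\psi))$ (where $e^{\psi}=0$); since the $V_i$ are holomorphic they in fact vanish identically on $Y-Y_0$, not merely on the part of $Y-Y_0$ meeting $U$. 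Hence any linear combination $V=\sum_i\lambda_iV_i$ vanishes on $Y-Y_0$ as required by the hypothesis of Lemma \ref{lemma:flowexists}.

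Next I would produce the diffeomorphism $\alpha\colon Y_0\to Y_0$ of that lemma so that $W:=\alpha(U\cap Y_0)$ is relatively compact. The natural choice is an automorphism coming from the $\mathbb C^*$-action scaling the fibres of $L^*$: the map $\alpha(z,w)=(z,\lambda_0 w)$ for a suitable constant $\lambda_0>1$ (independent of the small parameters $\lambda$), which preserves $Y_0$ since it fixes the zero section, $\pi^{-1}(E)$ and $\pi^{-1}(\Sing\psi)$ setwise. Under $x_i=\ln|w|^2$-type coordinates this expands the disc bundle $U$ to a larger disc bundle $\{\widehat{\phi_+}+\pi^*\psi\le 1+\ln|\lambda_0|^2\}$ intersected with $Y_0$; one must check that after removing a neighbourhood of $j(X)$ (which $U\cap Y_0$ already does, since $y_0\notin j(X)$ and $U\cap Y_0$ stays away from $w=0$ — here I would use that on $U$ we have $|w|^2|s_E|^2e^{\psi}$ bounded, and on $Y_0$ the product does not vanish, so actually one should enlarge $U$ only in the fibre directions where it is safe) the resulting set $W$ has compact closure in $Y$, or at least in $\overline Y$, and that $\alpha_*V=\sum_i\lambda_i\alpha_*V_i$ extends continuously to $\overline W$ vanishing on $\overline W-W$. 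The vanishing on the boundary pieces lying over $E\cup\Sing(\psi)$ is again immediate from \eqref{eq:estimatevectorfield}; the vanishing on the piece of $\overline W-W$ at infinity (large $|w|$) follows because $W$ is carved out of a disc bundle of finite radius, so its closure does not reach $|w|=\infty$ at all.

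Then Lemma \ref{lemma:flowexists} directly yields, for each fixed $\lambda$, a $t_0(\lambda)>0$ such that $\exp(t\sum_i\lambda_iV_i)(y)$ exists for $|t|\le t_0(\lambda)$ and $y\in U$. To get the uniform statement I would rescale: $\exp(t\sum_i\lambda_iV_i)=\exp(\sum_i(t\lambda_i)V_i)$, so it suffices to have one $t_0$ that works for the field $\sum_i\mu_iV_i$ as $\mu$ ranges over the unit ball $|\mu|\le 1$; the constant $t_0$ in Lemma \ref{lemma:flowexists} depends only on a sup-norm bound for the extended field on the compact set $\overline W$, and the $V_i$ are fixed holomorphic vector fields with bounded sup-norm, so $\sup_{|\mu|\le 1}\|\sum_i\mu_iV_i\|$ is finite and independent of $\mu$. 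Rescaling back gives the claim with this $t_0$. The main obstacle, in my view, is not the flow theory but the bookkeeping in the second step: verifying that one can enlarge the disc bundle $U$ by the fixed automorphism $\alpha$ in such a way that the image stays relatively compact \emph{and} simultaneously stays at positive distance from $j(X)$, so that the extension of $\alpha_*V$ across all of $\overline W-W$ genuinely vanishes there — this is where the precise shape of $Y_0$ (with the zero section removed) and the estimate \eqref{eq:estimatevectorfield} have to be combined carefully.
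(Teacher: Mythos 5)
There is a genuine gap at the central step. Your choice of $\alpha$ as the constant fibre rescaling $(z,w)\mapsto (z,\lambda_0 w)$ cannot produce a relatively compact $W=\alpha(U\cap Y_0)$, because $U=\{\widehat{\phi_+}+\pi^*\psi\le 1\}$ is itself unbounded in the fibre directions over $E\cup\Sing(\psi)$: the defining inequality reads $\ln\bigl(|w|^2|s_E|^2e^{\psi}\bigr)+\phi_A\le 1$, so over any point of $E\cup\Sing(\psi)$ the whole fibre lies in $U$, and over nearby points $U$ contains discs whose radius tends to infinity. A constant scaling leaves this unboundedness untouched, so $\overline W$ is not compact in $Y$ and Lemma \ref{lemma:flowexists} does not apply. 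Your supporting claims are also incorrect: the bound on $|w|^2|s_E|^2e^{\psi}$ over $U$ is an upper bound, so it does not keep $U\cap Y_0$ away from $w=0$, and the assertion that the closure of $W$ ``does not reach $|w|=\infty$ at all'' is false precisely over $E\cup\Sing(\psi)$. Retreating to compactness in $\overline Y$ does not rescue the argument as written, since one would then have to extend $\alpha_* V$ continuously to the part of $\overline W-W$ lying on the section at infinity and show it vanishes there, and the estimate \eqref{eq:estimatevectorfield} (a bound on $U\subset Y$) gives no such control in coordinates adapted to $\overline Y$.

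The paper's proof hinges on taking a \emph{non-constant} fibrewise rescaling $\alpha(\zeta)=f\zeta$ with $f=|s_E|_{\phi_E}e^{(\psi-\phi_F)/2}$, which vanishes exactly on $E\cup\Sing(\psi)$; then $(\widehat{\phi_+}+\pi^*\psi)\circ\alpha=\widehat{\phi_A}+\pi^*(\phi_F-\phi_E)$ on $Y_0$, so $\overline W=\{\widehat{\phi_A}+\pi^*(\phi_F-\phi_E)\le 1\}$ is genuinely compact. The remaining (and essential) work is to verify that $\tilde V=\alpha_*V$ extends to $\overline W$ and vanishes on $\overline W-W$: writing $\tilde v_w(\tilde z,\tilde w)=\frac{\partial f}{\partial z}\frac{v_z}{f}\tilde w+v_w(z,w)f(z)$, one uses the exponential Lipschitz hypothesis on $\psi$ to see that $\partial f/\partial z$ is globally bounded, and the $m=2$ case of \eqref{eq:estimatevectorfield} to bound $f^{-1}v_z$ by $|s_E|e^{\psi}$. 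This computation, absent from your proposal, is the substance of the lemma; without it (or some replacement) the flow-time cannot be made uniform near $E\cup\Sing(\psi)$. Your final rescaling observation, $\exp(t\sum_i\lambda_iV_i)=\exp(\sum_i(t\lambda_i)V_i)$, giving uniformity in $\lambda$, is fine and consistent with how the paper obtains a single $t_0$.
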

\begin{proof}
We apply the previous lemma to the vector field $V=V_i$ for $i=1,\ldots, n+1$ and $Y_0 = Y-j(X)\cup \pi^{-1}(E\cup \Sing(\psi))$.  Picking smooth metrics $\phi_F$ and $\phi_E$ on $F$ and $E$ respectively, define $\alpha\colon Y_0 \to Y_0$ by
\[\alpha(\zeta) = f \zeta   \quad \text{ where } f = |s_{E}|_{\phi_{E}} e^{(\psi-\phi_F)/2}\]
and the multiplication is to be understood in the fibre direction of $Y$, and  $\psi-\phi_F$ is thought of as a function on $X$.  Recall also that included in the assumption that $\psi$ is exponentially Lipschitz  is that $e^\psi$ is smooth away from $\Sing(\psi)$.    Thus, by construction, $\alpha$ is a diffeomorphism.  Now a simple calculation reveals
\[ (\widehat{\phi}_+ + \pi^* \psi)\circ \alpha = \widehat{\phi_A} + \pi^*(\phi_F - \phi_E) \quad \text{ on } Y_0,\]
and thus if $W = \alpha(U\cap Y_0)$ then $\overline{W} = \overline{\alpha(U\cap Y_0)} = \{ \widehat{\phi_A} + \pi^* (\phi_F-\phi_E)\le 1\}$ which is compact. Now set $\tilde{V} = \alpha_* V$ and suppose that locally in $(z,w)$ coordinate $V= v_z \frac{\partial}{\partial dz} + v_w \frac{\partial}{\partial dw}$ and similarly for $\tilde{V}$.   Writing $\alpha$ locally as $(z,w)\mapsto (\tilde{z},\tilde{w})$ we have $\tilde{v}_z(\tilde{z},\tilde{w}) = v_z(z,w)$ and 
\[\tilde{v}_w (\tilde{z}, \tilde{w})= \frac{\partial f}{\partial z} \frac{v_z}{f} \tilde{w} + v_w(z,w) f(z).\]
Now the fact that $\psi$ is exponentially Lipschitz implies $\frac{\partial f}{\partial z}$ is globally bounded.  Thus taking $m=2$ in the estimate in \eqref{eq:estimatevectorfield} bounds $f^{-1} v_z$ by $|s_{E}|e^\psi$.  Hence $\tilde{V}$ extends over $\overline{W}$ and vanishes on $\overline{W}-W$, so the previous lemma applies to give the result.
\end{proof}

\subsection{Proof of Lipschitz regularity}

Suppose $V_1,\ldots,V_{n+1}$ are the vector fields provided by Lemma \ref{lem:existencevectorfields} whose evaluation at $y_0$ span $T_{y_0}Y$ taking $m=2$.   Consider the flow 
\[\theta_{\lambda}(y) = \exp(\sum_{i=1}^{n+1} \lambda_i V_i)(y),\]
which by Lemma \ref{lem:flowexists} is well defined for $y\in U$ and $|\lambda|$ sufficiently small.  Then for any function $f$ on $U$ denote the pullback function by
\[f^{\lambda}(y) := f(\theta_{\lambda}(y))\]  
Thus to show that a function $f$ on $Y$ is Lipschitz near $y_0$ it is sufficient to prove
\[ |f^{\lambda}(y) - f(y)|\le C|\lambda|\]
for some constant $C$ and $y$ in some neighbourhood of $y_0$.

\begin{lemma}\label{lem:boundlambdafunction}\
  \begin{enumerate}
\item Let $\phi_0$ be a metric on $L$ such that $\phi_0-\delta_1 |s_{E}|^2-\delta_2 \psi$ is smooth for some constant $\delta_i\ge 0$.  Then  there is a constant $C_{\alpha}$ so that 
\[| \widehat{\phi_0}^{\lambda} - \widehat{\phi_0} | \le C_{\alpha} |\lambda|\]
on $U\cap Y_0$.
\item Let $\phi_0$ be a metric on $L$ such that $\phi_0-\delta |s_{E}|^2$ is smooth for some constant $\delta\ge 0$.  Then  there is a constant $C_{\alpha}$ so that for all multiindices $\alpha$ of total order at most two
\[| \partial^{\alpha}_{z,w} ( \widehat{\phi_0}^{\lambda} - \widehat{\phi_0}) | \le C_{\alpha} |\lambda|\]
on $U\cap Y_0$.
\end{enumerate}
\end{lemma}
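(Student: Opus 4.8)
The plan is to reduce both statements to uniform pointwise estimates along the flow of the $V_i$. Let $\Lambda=\sum_i\lambda_i(V_i+\overline{V_i})$ be the real vector field generating $\theta_{s\lambda}=\exp(s\sum_i\lambda_iV_i)$, which by Lemma \ref{lem:flowexists} is defined for $|\lambda|\le t_0$ and $y\in U$; moreover $\theta_{s\lambda}$ maps $U\cap Y_0$ into itself, by the same argument (the conjugating diffeomorphism $\alpha$ in the proof of that lemma carries each $V_i$, hence $\Lambda$, to a field vanishing on $\overline{W}\setminus W$). On $U\cap Y_0$ one is away from $j(X)$, $\pi^{-1}(E)$ and $\pi^{-1}(\Sing(\psi))$, so $\widehat{\phi_0}$ is smooth there, and in local holomorphic coordinates and frames the fundamental theorem of calculus gives
\[
\widehat{\phi_0}^{\lambda}-\widehat{\phi_0}=\int_0^1\bigl(\Lambda\widehat{\phi_0}\bigr)\circ\theta_{s\lambda}\,ds\qquad\text{on }U\cap Y_0,
\]
where $\Lambda\widehat{\phi_0}=2\operatorname{Re}\sum_i\lambda_iV_i\widehat{\phi_0}$ is the rate of change of $\widehat{\phi_0}$ along the flow. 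Thus part (1) follows once $\Lambda\widehat{\phi_0}$ is bounded by $C|\lambda|$ uniformly on $U\cap Y_0$, and part (2) once, in addition, its first two coordinate derivatives are, together with the corresponding derivatives of $\theta_{s\lambda}$.

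For part (1), write locally $\widehat{\phi_0}=S+\delta_1\ln|s_E|^2+\delta_2\psi+\ln|w|^2$ with $S$ smooth; then each $V_i\widehat{\phi_0}$ is a finite sum of a component of $V_i$ times one of: a bounded smooth function; a term $\lesssim|s_E|^{-1}$ from $\partial\ln|s_E|^2$; a term $\lesssim e^{-\psi}$ from $\partial\psi$ (here exponential Lipschitz continuity of $\psi$ enters, bounding $d(e^{\psi})=e^{\psi}\,d\psi$); or the term $w^{-1}$ from $\partial\ln|w|^2$. Taking square roots in \eqref{eq:estimatevectorfield} with $m=2$ gives $|V_i|\le C\min\{|w|^{2},\,|s_E|^{2}e^{\psi}\}$; since $|s_E|$ and $e^{\psi}$ are bounded on the compact $X$ while $|w|$ is bounded on the disc bundle $U$, each of these products is bounded on $U\cap Y_0$ — the decisive one being $|V_i|\,e^{-\psi}\le C|s_E|^{2}e^{\psi}e^{-\psi}=C|s_E|^{2}$, which is exactly why the quadratic (rather than linear) vanishing of $V_i$ is needed: with $m=1$ one would only get $|V_i|\,e^{-\psi}\le C|s_E|e^{-\psi/2}$, unbounded near $\Sing(\psi)$. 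Hence $|\Lambda\widehat{\phi_0}|\le C|\lambda|$ on $U\cap Y_0$, and the display gives (1).

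For part (2), where now $\widehat{\phi_0}=S+\delta\ln|s_E|^2+\ln|w|^2$ with $S$ smooth on all of $X$, differentiate the display in $(z,w)$: by the chain rule $\partial^{\alpha}(\widehat{\phi_0}^{\lambda}-\widehat{\phi_0})$ is an $s$-integral of a sum of terms $\bigl[(\partial^{\beta}\Lambda\widehat{\phi_0})\circ\theta_{s\lambda}\bigr]\prod_k\partial^{\gamma_k}\theta_{s\lambda}$ with $|\beta|\le|\alpha|\le 2$ and $1\le|\gamma_k|\le 2$, so it suffices to bound, uniformly on $U\cap Y_0$ and in $s\in[0,1]$, $|\lambda|\le t_0$, the quantities $\partial^{\gamma}\theta_{s\lambda}$ ($1\le|\gamma|\le 2$) and $\partial^{\beta}\Lambda\widehat{\phi_0}$ ($|\beta|\le 2$, which will come out $O(|\lambda|)$). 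For the flow derivatives one uses the variational equation $\frac{d}{ds}D\theta_{s\lambda}=(D\Lambda)\circ\theta_{s\lambda}\cdot D\theta_{s\lambda}$ and its second-order analogue (linear, with a source quadratic in $D\theta_{s\lambda}$): since the flow stays in $U\cap Y_0$, Grönwall's inequality reduces these to bounds for $V_i$ and its first two derivatives on $U\cap Y_0$, and with $D\theta_0=\mathrm{id}$, $D^2\theta_0=0$ this gives $D\theta_{s\lambda}=\mathrm{id}+O(|\lambda|)$ and $D^2\theta_{s\lambda}=O(|\lambda|)$. For $\partial^{\beta}\Lambda\widehat{\phi_0}$: since $\phi_0-\delta\ln|s_E|^2$ is smooth on all of $X$, the metric $\widehat{\phi_0}$ is smooth on the open set $Y\setminus(j(X)\cup\pi^{-1}(E))\supset U\cap Y_0$, and there $\partial^{\beta}\widehat{\phi_0}$ has at worst a pole of order $\le|\beta|$ along $j(X)\cup\pi^{-1}(E)$; thus the one remaining task is to bound $\partial^{\gamma}V_i$ ($|\gamma|\le 2$) against such poles. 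For this one applies Cauchy's estimates on coordinate polydiscs contained in $U$ whose radius is a fixed small multiple of the distance to $j(X)\cup\pi^{-1}(E)$ — a fixed small radius near $\pi^{-1}(\Sing(\psi))$, where that distance is bounded below and where $\widehat{\phi_0}$ is smooth — together with \eqref{eq:estimatevectorfield} for $m$ taken large enough (permissible, since the $V_i$ need only span $T_{y_0}Y$) that the resulting power of the radius dominates both the $|\gamma|$ powers lost to Cauchy and the pole orders of the differentiated $\widehat{\phi_0}$. Assembling these bounds, every term above is $O(|\lambda|)$ uniformly on $U\cap Y_0$, proving (2).

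The sole real difficulty is the uniformity over $U\cap Y_0$, which is not relatively compact — its closure meets $j(X)\cup\pi^{-1}(E\cup\Sing(\psi))$ — so one cannot simply appeal to compactness. The argument rests entirely on the quantitative trade-off, supplied by Lemma \ref{lem:existencevectorfields}, between the arbitrarily high order to which the $V_i$ can be made to vanish along that locus and the only logarithmically mild blow-up of $\widehat{\phi_0}$ (and of its derivatives) there, with Cauchy's estimates converting pointwise smallness of the holomorphic $V_i$ into smallness of their derivatives; the subtlest individual point is controlling the contribution of the a priori irregular set $\Sing(\psi)$, which in part (1) is handled by matching $|V_i|\lesssim e^{\psi}$ against $|\partial\psi|\lesssim e^{-\psi}$ — the latter being exactly exponential Lipschitz continuity of $\psi$, and the reason both that hypothesis and $m\ge 2$ are needed.
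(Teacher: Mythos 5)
Your argument is correct and follows the paper's own proof of this lemma: the fundamental-theorem-of-calculus identity along the flow of $\sum_i\lambda_iV_i$ (which stays in $U\cap Y_0$ by the proof of Lemma \ref{lem:flowexists}), with the $m=2$ estimate \eqref{eq:estimatevectorfield} matched against the logarithmic poles of $\widehat{\phi_0}$ and the exponential Lipschitz bound on $e^{\psi}$, your part (2) merely filling in the chain-rule/Cauchy-estimate details that the paper compresses into ``the proof of the second is exactly the same''. One small correction: $|w|$ is \emph{not} bounded on $U$ (indeed this is precisely why $U\cap Y_0$ fails to be relatively compact), but the term $|w^{-1}dw(V_i)|$ is still uniformly bounded by using both branches of the minimum in \eqref{eq:estimatevectorfield}, since $\min\{|w|^2,\,|s_E|^2e^{\psi}\}/|w|\le (|s_E|^2e^{\psi})^{1/2}$, which is bounded on $U$.
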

\begin{proof}
For $y\in U\cap Y_0$ let
\[\gamma(t) = \exp( t \sum_{i=1}^{n+1} \lambda_i V_i)(y) ,\]
so $\gamma(0) = y$ and $\gamma(1) = \theta_{\lambda}(y)$.  Now $\widehat{\phi_0}$ is smooth away from $j(X)\cup \pi^{-1}(E\cup \Sing(\psi))$, so we can write
\begin{equation}
 \widehat{\phi_0}^{\lambda}(y)- \widehat{\phi_0}(y) = \int_0^1 d\widehat{\phi_0}\vert_{\gamma(t)}\left(\frac{d\gamma}{dt}\right) dt.\label{eq:ftc}
\end{equation}

We shall prove the first statement.    Pick local coordinates $z$ on $X$ and $w$ on $L$ around some point in $Y$.    Then
\begin{equation}\label{eq:dphi0}
d\widehat{\phi_0}( \frac{d\gamma}{dt}) = \left( w^{-1} dw + d \phi_0\right) (\sum \lambda_i V_i)
\end{equation} 
From \eqref{eq:estimatevectorfield}, $|V_i|\le C_1|w|^2$, so $|w^{-1} dw(V_i)|$ is uniformly bounded and thus \[|w^{-1} dw(\sum_i \lambda_i V_i)| \le C_2|\lambda|,\] for some constant $C_2$.     

If our chart lies outside $\pi^{-1}(E\cup \Sing(\psi))$ then $\widehat{\phi_0}$ is smooth so the second term in \eqref{eq:dphi0} is clearly bounded by a constant times $|\lambda|$.     To deal with the case that the chart meets $E\cup \Sing(\psi)$, we use the hypothesis $\phi_0 - \delta_1 \ln |s_{E}|^2-\delta_2\psi$ is smooth to deduce
\[ |d \widehat{\phi_0}(V_i)| \le C_4   + \delta_1 \left\vert \frac{1}{s_{E}} \frac{\partial s_{E}}{\partial x}(V_i)\right\vert + \delta_2 e^{-\psi}\left\vert \frac{\partial e^\psi}{\partial x}(V_i) \right\vert\]
where the derivative of $e^\psi$ exists weakly (and is globally bounded) by the assumption that $e^{\psi}$ is Lipschitz. Now using \eqref{eq:estimatevectorfield}, $|V_i|^2\le C_5 \min\{ |s_{E}|^2,e^{\psi}\}$, so
\[\vert\left( w^{-1} dw + d \phi_0\right) (\sum \lambda_i V_i)\vert \le C_5 |\lambda|\]
for some constant $C_5$.   Putting all of this together with \eqref{eq:dphi0} and \eqref{eq:ftc} gives the first statement of the Lemma.  The proof of the second is exactly the same,  observing that the assumption on $\phi_0$ now means we do not need to take any further derivatives of $\psi$. 
\end{proof}

\begin{corollary}
For $|\lambda|$ sufficiently small we have
\[\widehat{\psi}^{\lambda} = \widehat{\psi} + O(1)\]
on $U\cap Y_0$.
\end{corollary}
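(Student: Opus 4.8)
The plan is to deduce this immediately from part~(1) of Lemma~\ref{lem:boundlambdafunction}, exploiting the fact that an $O(1)$ conclusion (rather than $O(|\lambda|)$) leaves exactly enough slack to absorb the non-smooth part of $\phi$. Here $\widehat{\psi}$ is an abbreviation for $\widehat{\psi'}=\pi^*\psi'+\zeta$, the lift to $Y$ of the metric $\psi'=\phi-\phi_F+\psi$ on $L$ (the symbol $\psi$ itself being a metric on $F$), so what is to be shown is that $\widehat{\psi'}^{\lambda}-\widehat{\psi'}$ is bounded on $U\cap Y_0$ for $|\lambda|$ small. Now Lemma~\ref{lem:boundlambdafunction}(1) already provides an $O(|\lambda|)$ bound for $\widehat{\phi_0}^{\lambda}-\widehat{\phi_0}$ whenever $\phi_0-\delta_2\psi$ is smooth, and $\psi'$ fails to be of this form only because $\phi$ is merely Lipschitz (or $C^{1,1}$) rather than smooth; but that discrepancy is a globally defined \emph{bounded} function on $X$, which is all the room we need.

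Carrying this out, fix any smooth metric $\phi_{\mathrm{sm}}$ on $L$ and put $\phi_0:=\phi_{\mathrm{sm}}-\phi_F+\psi$, so that $\phi_0-\psi=\phi_{\mathrm{sm}}-\phi_F$ is a smooth metric on $L-F$. Applying Lemma~\ref{lem:boundlambdafunction}(1) with $\delta_1=0$ and $\delta_2=1$ produces constants $C,t_0>0$ with $|\widehat{\phi_0}^{\lambda}-\widehat{\phi_0}|\le C|\lambda|$ on $U\cap Y_0$ whenever $|\lambda|\le t_0$. On the other hand $\widehat{\psi'}-\widehat{\phi_0}=\pi^*(\psi'-\phi_0)=\pi^*(\phi-\phi_{\mathrm{sm}})$, and since $\phi$ and $\phi_{\mathrm{sm}}$ are continuous metrics on $L$ over the compact manifold $X$ the difference $\phi-\phi_{\mathrm{sm}}$ is a bounded function, so that $|\pi^*(\phi-\phi_{\mathrm{sm}})^{\lambda}-\pi^*(\phi-\phi_{\mathrm{sm}})|\le 2\sup_X|\phi-\phi_{\mathrm{sm}}|$. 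Adding the two inequalities gives $\widehat{\psi'}^{\lambda}=\widehat{\psi'}+O(1)$ on $U\cap Y_0$, which is the assertion.

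Thus the corollary is a formality once Lemma~\ref{lem:boundlambdafunction} is in hand, and it is worth saying where the real obstacle sits, namely inside that lemma: writing $\widehat{\phi_0}^{\lambda}(y)-\widehat{\phi_0}(y)=\int_0^1 d\widehat{\phi_0}\vert_{\gamma(t)}(\frac{d\gamma}{dt})\,dt$ as in~\eqref{eq:ftc}, one must bound $w^{-1}dw(\sum_i\lambda_iV_i)$ and, more seriously, $d\pi^*\psi(\sum_i\lambda_iV_i)$ — serious because $\psi$ need not be Lipschitz near $\Sing(\psi)$, so it cannot be differentiated naively. The decisive input is the ``$|s_E|^{2m}e^{m\psi}$'' half of~\eqref{eq:estimatevectorfield}, which with $m=2$ bounds the base-direction components of the $V_i$ by $Ce^{\psi}$ on $U$; combined with the exponential Lipschitz bound $|e^{\psi(x)}-e^{\psi(y)}|\le C|x-y|$ this gives $|\frac{d}{dt}e^{\psi(\pi\gamma(t))}|\le C'|\lambda|\,e^{\psi(\pi\gamma(t))}$ along each flow line, and since the flow line stays inside $U\cap Y_0$ (as in the proof of Lemma~\ref{lem:flowexists}), so $e^{\psi}$ remains positive along it, a Gronwall comparison yields $|\psi(\pi\gamma(1))-\psi(\pi\gamma(0))|\le C'|\lambda|$ uniformly. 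The $w^{-1}dw$ term is tamer: the other half of~\eqref{eq:estimatevectorfield} together with the boundedness of $|w|^2|s_E|^2e^{\psi}$ on $U=\{\widehat{\phi_+}+\pi^*\psi\le 1\}$ controls $w^{-1}dw(V_i)$. I expect precisely this interplay between the vector field estimate and the exponential H\"older continuity of $\psi$ to be the only genuine difficulty; granting it, the reduction above finishes the proof.
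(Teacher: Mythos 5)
Your argument is correct and is essentially the paper's: the corollary is an immediate consequence of Lemma \ref{lem:boundlambdafunction}(1), which the paper simply invokes with $\phi_0=\psi$ (i.e.\ $\delta_1=0$, $\delta_2=1$), giving even an $O(|\lambda|)$ bound. Your extra step of passing to the smooth reference $\phi_{\mathrm{sm}}-\phi_F+\psi$ and absorbing the bounded difference $\phi-\phi_{\mathrm{sm}}$ is just a careful way of handling the reading of $\widehat{\psi}$ as $\widehat{\psi'}$, and costs nothing since only an $O(1)$ conclusion is claimed.
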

\begin{proof}
  Apply the previous Lemma with $\phi_0 = \psi$.
\end{proof}

\begin{lemma}\label{lem:38berman}
  Suppose that $f$ is an $S^1$-invariant plurisubharmonic function on $U$ that is strictly increasing on the fibres of $Y$.  Suppose that $E'\subset X$ is locally pluripolar, and that there is a constant $c$ such that $f<c$ on $j(X-E')$ and $f>c$ on $\partial U \cap \pi^{-1}(X-E')$ where $\partial U = \{ \widehat{\phi_+} + \pi^*\psi = 1\}$.   Then there exists an extension $E(f)\in \PSH_h(Y)$ such that $E(f) =f$ on the level set $\{f=c\}$.  
\end{lemma}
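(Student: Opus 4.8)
The plan is to build $E(f)$ directly out of the geometry of the level set $\{f=c\}$, exploiting that a sublevel set of a plurisubharmonic function is pseudoconvex together with the classical description of pseudoconvex Hartogs domains.

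First I would record the shape of the level set. Since $f$ is $S^1$-invariant and strictly increasing on the fibres of $Y$, and is convex (hence continuous) in the fibre coordinate $\zeta=\ln|w|^2$, the hypotheses $f<c$ on $j(X-E')$ and $f>c$ on $\partial U\cap\pi^{-1}(X-E')$ force, for each $x\in X-E'$, the intersection $\{f=c\}\cap\pi^{-1}(x)$ to be a single circle, say of radius $r(x)\in(0,\infty)$ in a local trivialisation of $L^*$. Thus over $X-E'$ the open set $\Omega:=\{f<c\}\cap\pi^{-1}(X-E')$ is the disc subbundle $\{\zeta<\ln r(x)^2\}$, and $\{f=c\}\cap\pi^{-1}(X-E')$ is its boundary.

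The main point is then that $\gamma_0:=c-\ln r^2$ is a positive singular metric on $L$ over $X-E'$. Indeed, $f$ being plurisubharmonic on $U$, the open set $\Omega=\{f<c\}$ is locally pseudoconvex at each of its boundary points; in a local holomorphic trivialisation of $L^*$ over a coordinate chart $V\subset X$ it becomes a Hartogs domain $\{(z,w):z\in V,\ |w|<r(z)\}$, and the classical characterisation of such domains forces $-\ln r$ to be plurisubharmonic on $V$. As this is a local statement independent of the trivialisation, $-\ln r^2$ is $dd^c$-psh relative to any smooth metric, i.e.\ $\gamma_0\in\PSH(L|_{X-E'})$; equivalently $\widehat{\gamma_0}:=\pi^*\gamma_0+\zeta$ lies in $\PSH_h(\pi^{-1}(X-E'))$, and by the very definition of $r$ the function $\widehat{\gamma_0}$ equals $c$ precisely on $\{f=c\}\cap\pi^{-1}(X-E')$. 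It then remains to extend $\widehat{\gamma_0}$ across $\pi^{-1}(E')$: since $E'$ is locally pluripolar so is $\pi^{-1}(E')\subset Y$, and a plurisubharmonic function defined off a pluripolar set and locally bounded above extends across it (uniquely, as the upper semicontinuous regularisation of its $\limsup$); here one uses $f<c$ on $j(X-E')$, together with the upper semicontinuity of $f$ and $\Omega\subset U$, to keep $r$ locally bounded away from $0$ near $E'$, hence $\gamma_0$ locally bounded above. Setting $E(f)$ to be the resulting function, homogeneity is preserved by the $\limsup$-extension since $\pi^{-1}(E')$ is $\mathbb C^*$-invariant, so $E(f)\in\PSH_h(Y)$ and agrees with $f$ on $\{f=c\}$ by construction.

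The main obstacle is the third paragraph's first step: converting the pseudoconvexity of $\{f<c\}$ into plurisubharmonicity of the radius metric $\gamma_0$ in the twisted setting of the total space of $L^*$ — this is the honest content of the lemma. The extension across the pluripolar set $E'$ is standard in outline but needs a careful use of the hypothesis $f<c$ on $j(X-E')$ to produce the required local upper bound on $\gamma_0$ near $E'$.
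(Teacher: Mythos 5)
Your overall route is exactly the one behind the paper's proof: the paper does not argue this lemma directly but cites Berman's Lemma 3.8, whose proof is precisely your picture --- the sublevel set $\{f<c\}$ is an $S^1$-invariant, fibrewise-disc, pseudoconvex neighbourhood of the zero section, its fibrewise radius defines a positively curved metric $\gamma_0=c-\ln r^2$ on $L$ via the classical Hartogs characterisation, and $E(f)=\widehat{\gamma_0}=\pi^*\gamma_0+\zeta$ is the desired logarithmically homogeneous extension, with the pluripolar set $E'$ handled by extending $\gamma_0$ across it.

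There is, however, a genuine gap in the one step you yourself flag, namely the local upper bound on $\gamma_0$ near $E'$. You claim that $f<c$ on $j(X-E')$ together with upper semicontinuity of $f$ keeps $r$ bounded away from $0$ near $E'$; but upper semicontinuity points the wrong way here: at a point $x_0\in E'$ it only gives $\limsup_{y\to j(x_0)}f(y)\le f(j(x_0))$, so knowing $f<c$ at nearby zero-section points off $E'$ puts no cap on $f(j(x_0))$, and if $f(j(x_0))\ge c$ nothing in your argument prevents $r(x)\to 0$ as $x\to x_0$, in which case $\gamma_0$ is unbounded above near $x_0$ and admits no plurisubharmonic extension across $\pi^{-1}(E')$ (so the construction, and indeed the conclusion, would fail). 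The missing observation is that $f<c$ actually holds on \emph{all} of $j(X)$: the restriction of $f$ to the zero section $j(X)\cong X$ is plurisubharmonic on a compact manifold, hence constant (or $\equiv-\infty$), and since it is $<c$ somewhere it is $<c$ everywhere; alternatively, apply the sub-mean value inequality at $j(x_0)$ using that $E'$ has Lebesgue measure zero. Only after this does upper semicontinuity of $f$ at $j(x_0)$ produce a uniform $\delta>0$ with $f<c$ on the tube $\{|w|<\delta\}$ over a neighbourhood of $x_0$, hence $r\ge\delta$ there and $\gamma_0$ locally bounded above, so that the removable-singularity theorem for plurisubharmonic functions across pluripolar sets applies. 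A minor further point: your construction (like the cited proof) gives $E(f)=f$ only on $\{f=c\}\cap\pi^{-1}(X-E')$, not literally on all of $\{f=c\}$ as stated; this is harmless, since only the part of the level set over $X-E'$ is used later, but it is worth saying explicitly.
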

\begin{proof}
  This is \cite[Lemma 3.8]{Berman} (that $E'$ is allowed to be pluripolar is remarked in the proof of the cited result).
\end{proof}

Now suppose that $g$ is some function defined on the disc bundle $U\subset Y$.  
To apply the previous result we need invariant functions, which are obtained easily through the homogenisation operator that takes a function $g$ on $U$ to
\[ H(g) = (\sup_{\theta\in [0,2\pi]} g(e^{i\theta} y) )^*\]
where the multiplication is in the fibres of $Y=L^*$.

\begin{lemma}\label{lem:extension}
Suppose that either 
\[f=   H(\widehat{\tau_C}^{\lambda})\]
or 
\[f= H\left(\frac{1}{2}(\widehat{\tau_C}^{\lambda} +  \widehat{\tau_C}^{-\lambda})\right).\]   
Then for $|\lambda|$ sufficiently small (independent of $C$) there is an extension $E(f)\in PSH_h(Y)$ such that $E(f)= f$ on the level set $S=\{y: f(y) = f(y_0)\}$.
\end{lemma}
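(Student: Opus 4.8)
\emph{Proof proposal.}

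The strategy is to verify the hypotheses of Lemma~\ref{lem:38berman} for $f$, taking the constant there to be $c=f(y_0)$ and the exceptional set to be $E'=E\cup\Sing(\psi)$, which is locally pluripolar (being the $-\infty$-locus of the local plurisubharmonic function $\ln|s_{E}|^2+\psi$). That $f\in PSH(U)$ and is $S^1$-invariant is immediate once we recall from the proof of Lemma~\ref{lem:flowexists} that $\theta_{\pm\lambda}$ restrict to biholomorphisms of $U$ (the flow stays inside $U$, and the vector fields vanish on $U\setminus Y_0$): indeed $\widehat{\tau_C}^{\pm\lambda}=\widehat{\tau_C}\circ\theta_{\pm\lambda}$ and their average are then plurisubharmonic on $U$, and the homogenisation operator $H$ preserves plurisubharmonicity while manufacturing $S^1$-invariance.

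The point that makes the admissible $|\lambda|$ independent of $C$ is the sandwich $\phi_{+}+\psi\le\tau_C\le\phi$ coming from \eqref{eq:boundonphi+}, which on $Y$ reads $\widehat{\phi_{+}+\psi}^{\pm\lambda}\le\widehat{\tau_C}^{\pm\lambda}\le\widehat{\phi}^{\pm\lambda}$. Since after the normalisation $k=1$ one has $(\phi_{+}+\psi)-\ln|s_{E}|^2-\psi=\phi_A$ smooth, the metric $\phi_{+}+\psi$ falls under Lemma~\ref{lem:boundlambdafunction}(1); applying that lemma to $\phi_0=\phi_{+}+\psi$ and to $\phi_0=\phi$ (using its evident Lipschitz analogue for the latter, or just $|V_i|\le C|w|^2$ and boundedness of $\phi$) produces a constant $C_1$, independent of $C$ and $\lambda$, with
\[ \widehat{\phi_{+}+\psi}-C_1|\lambda|\ \le\ \widehat{\tau_C}^{\pm\lambda}\ \le\ \widehat{\phi}+C_1|\lambda|\qquad\text{on }U\cap Y_0. \]
Applying $H$, using its monotonicity and $H(\widehat{\phi_{+}+\psi})=\widehat{\phi_{+}+\psi}$, $H(\widehat{\phi})=\widehat{\phi}$, the same two-sided bound holds for $f$.

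For the level-set condition: on $j(X\setminus\Sing(\psi))$ the vector fields vanish, so $\theta_{\pm\lambda}$ is the identity there and $f=\widehat{\tau_C}=-\infty$ (the zero section has $\zeta=-\infty$ while $\tau_C\le\phi<\infty$), whence $f<c$ on $j(X\setminus E')$. On $\partial U=\{\widehat{\phi_{+}}+\pi^*\psi=1\}$, which is automatically disjoint from $\pi^{-1}(E')$, the identity $\widehat{\phi_{+}+\psi}\equiv1$ there and the lower bound above give $f\ge 1-C_1|\lambda|$. Since $x_0\notin E\cup\Sing(\psi)$ we may choose $y_0\in\pi^{-1}(x_0)\cap U\setminus j(X)$ with fibre coordinate $w_0$ so small that $\widehat{\phi}(y_0)=\phi(x_0)+\ln|w_0|^2\le\tfrac12$, and then the upper bound gives $c=f(y_0)\le\tfrac12+C_1|\lambda|$. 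Hence for $|\lambda|<1/(4C_1)$, a bound independent of $C$, we get $c<f$ on $\partial U\cap\pi^{-1}(X\setminus E')$, as required.

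It remains to see that $f$ is strictly increasing along the fibres of $Y$. Write $G=f-\zeta$; since $\zeta=\ln|w|^2$ is pluriharmonic on $Y\setminus j(X)\supset U\cap Y_0$, $G$ is plurisubharmonic and $S^1$-invariant there, and the bound above gives in particular $G\le\pi^*\phi+C_1|\lambda|$. Restricted to a fibre over a point of $X\setminus(E\cup\Sing(\psi))$, $G$ is therefore a subharmonic, $S^1$-invariant function of $w$ that is bounded above, i.e.\ a convex function of $\ln|w|^2$ on a half-line $(-\infty,a)$ that is bounded above; such a function is non-decreasing (otherwise convexity would force it to be unbounded above as $w\to0$). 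Hence $f=\zeta+G$ satisfies $f(\mu y)-f(y)\ge\ln|\mu|^2>0$ whenever $|\mu|>1$, so $f$ is strictly increasing on the fibres; the same argument applies verbatim to $f=H\big(\tfrac12(\widehat{\tau_C}^{\lambda}+\widehat{\tau_C}^{-\lambda})\big)$, which lies between the same two bounds. Lemma~\ref{lem:38berman} then yields $E(f)\in PSH_h(Y)$ with $E(f)=f$ on $S=\{f=f(y_0)\}$. I expect this last circle of estimates to be the main obstacle: $\tau_C$ itself is not of the form handled by Lemma~\ref{lem:boundlambdafunction}, so the uniform two-sided control of $f$ — which simultaneously delivers the strict fibrewise monotonicity and the $C$-independence of the threshold on $|\lambda|$ — must be obtained by comparison with $\phi_{+}+\psi$ and $\phi$.
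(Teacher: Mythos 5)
Your proposal is correct and follows the paper's argument in its essentials: both reduce the statement to Lemma \ref{lem:38berman} with $E'=E\cup\Sing(\psi)$ and $c=f(y_0)$, and both obtain the decisive $C$-independent boundary estimate $f\ge 1-C'|\lambda|$ on $\partial U$ from $\phi_++\psi\le\tau_C$ together with Lemma \ref{lem:boundlambdafunction}(1) applied to $\phi_0=\phi_++\psi$. You deviate on two subsidiary points. For strict monotonicity along the fibres the paper simply cites the first half of \cite[Lemma 3.9]{Berman} (using only that $\tau_C$ is plurisubharmonic), whereas you give a self-contained convexity argument for $f-\zeta$, which requires the extra upper bound $\widehat{\tau_C}^{\pm\lambda}\le\widehat{\phi}+C_1|\lambda|$ coming from $\tau_C\le\phi$ and the Lipschitz hypothesis on $\phi$; this is fine and more explicit, at the small cost of invoking the regularity of $\phi$ where the citation does not. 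For the level-value condition the paper normalises $\widehat{\tau_C}(y_0)=0$ and asserts $f(y_0)\le 1/2$, while you instead pick $y_0$ deep enough in the fibre that $\widehat{\phi}(y_0)\le 1/2$, making the bound $c\le 1/2+C_1|\lambda|$ immediate and manifestly independent of $C$; since $y_0$ is ours to choose in the set-up (it need only lie in $\pi^{-1}(x_0)\cap U\setminus j(X)$, and Lemma \ref{lem:existencevectorfields} can be run for that choice), this is legitimate provided the choice is made before the vector fields are fixed, and it in fact spells out a point the paper leaves rather terse.
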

\begin{proof}
We shall only consider $f=H(\widehat{\tau_C}^{\lambda})$ since the other case is essentially the same.  It is sufficient to construct such an extension on $Y_0$, since any such plurisubharmonic function extends to $Y$.   Now exactly as in the first half of the proof of \cite[Lemma 3.9]{Berman} one deduces that $f$ is strictly increasing along the fibres of $Y$,  since $\tau_C$ is plurisubharmonic.  

We make the normalisation so $\widehat{\tau}_C(y_0)=0$, and observe that $f(y_0)\le 1/2$ for $\lambda$ sufficiently small.   Then since $\phi_+ + \psi \le \tau_C$
\[ f \ge \widehat{\tau_C}^{\lambda} \ge \widehat{\phi_+ + \psi}^{\lambda} \ge \widehat{\phi_+} + \pi^*\psi - C'|\lambda|\]
over $U\cap Y_0$ for some $C'>0$ (independent of $C$) where we have used \eqref{lem:boundlambdafunction} with $\phi_0:= \phi_+ + \psi$ so $\phi_0 - \ln |s_{E}|^2 - \psi = \phi_A$ is smooth.  

Thus on $\partial U\cap Y_0$ we have
\[ f \ge 1 - C'|\lambda| \ge 1/2\]
for $|\lambda|$ sufficiently small (independent of $C$).  Thus the existence of the extension $\tilde{f}$ is provided by \eqref{lem:38berman} using $E'=E\cup \Sing(\psi)$. 
\end{proof}

\begin{lemma}
Suppose that $\phi$ is Lipschitz.  Then $\widehat{\phi}_{[\widehat{\psi}]}$ is Lipschitz over $Y_0$ and thus $\phi_{[\psi]}$ is Lipschitz over $X-\mathbb B_+(L-F) \cup \Sing(\psi)$.
\end{lemma}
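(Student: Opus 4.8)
The plan is to reduce the Lipschitz regularity of $\widehat{\phi}_{[\widehat{\psi}]}$ near $y_0$ to the comparison estimate $|\widehat{\phi}_{[\widehat{\psi}]}^{\lambda} - \widehat{\phi}_{[\widehat{\psi}]}| \le C|\lambda|$ for small $|\lambda|$, using the flows $\theta_\lambda$ built from the vector fields $V_1,\dots,V_{n+1}$, which span $T_{y_0}Y$ and thus — after composing with a biholomorphism sending the coordinate directions to these flow directions — give genuine Lipschitz control in a neighbourhood of $y_0$. Since $\widehat{\phi}_{[\widehat{\psi}]} = (\lim_{C\to\infty}\widehat{\tau_C})^*$, it suffices to prove the comparison estimate for $\widehat{\tau_C}$ uniformly in $C$, then pass to the supremum over $C$ and take upper-semicontinuous regularisations, noting that a uniform Lipschitz bound survives both operations.

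The heart of the argument is to show $\widehat{\tau_C}^{\lambda} \le \widehat{\tau_C} + C''|\lambda|$ for small $|\lambda|$ independent of $C$; the reverse follows by replacing $\lambda$ by $-\lambda$. First I would homogenise: set $f = H(\widehat{\tau_C}^{\lambda})$, which is $S^1$-invariant, strictly increasing on fibres, and dominates $\widehat{\tau_C}$ (homogenisation only increases). By Lemma \ref{lem:extension} there is $E(f) \in \PSH_h(Y)$ with $E(f) = f$ on the level set $S = \{f = f(y_0)\}$. The strategy, following Berman, is to push $E(f)$ down slightly: consider $E(f) - \kappa|\lambda|$ for a suitable constant $\kappa$. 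Because $\widehat{\tau_C}$ agrees with $\widehat{\phi}$ on the fibre over points where $\widehat{\phi}$ is itself already a competitor, and because $f$ differs from $\widehat{\tau_C}$ by at most $O(|\lambda|)$ on $j(X)$ (here the Lipschitz hypothesis on $\phi$ and Lemma \ref{lem:boundlambdafunction}(1) with $\phi_0 = \phi$ enter), one checks $E(f) - \kappa|\lambda| \le \widehat{\phi}$ near the zero section; and $E(f)-\kappa|\lambda|$ is bounded above by $\widehat{\psi'}^{\lambda} + O(1) = \widehat{\psi'} + O(1)$ on $U \cap Y_0$ by the Corollary above, so it is (up to an additive constant absorbed into $\tau_{C'}$) a competitor in the envelope defining $\tau_{C'}$ for a slightly larger $C'$. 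Hence $E(f) - \kappa|\lambda| \le \widehat{\tau_{C'}} \le \widehat{\phi}_{[\widehat{\psi}]}$, and restricting to $S$ gives $f - \kappa|\lambda| \le \widehat{\phi}_{[\widehat{\psi}]}$ on $S$, i.e. $\widehat{\tau_C}^{\lambda} \le f \le \widehat{\phi}_{[\widehat{\psi}]} + \kappa|\lambda|$ at $y_0$. Since $y_0$ ranges over a full neighbourhood, this is the desired one-sided bound, uniform in $C$; letting $C \to \infty$ and regularising yields $\widehat{\phi}_{[\widehat{\psi}]}^{\lambda} \le \widehat{\phi}_{[\widehat{\psi}]} + \kappa|\lambda|$, and symmetrising gives Lipschitz regularity over $Y_0$. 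Finally, Lipschitz regularity of $\widehat{\phi}_{[\widehat{\psi}]} = \pi^*\phi_{[\psi]} + \zeta$ over $Y_0$ descends to Lipschitz regularity of $\phi_{[\psi]}$ over $X - \mathbb{B}_+(L-F)\cup\Sing(\psi)$, since $\zeta$ is smooth on $Y_0$ and $\pi$ is a smooth submersion there; one applies this at a point of each fibre away from the zero section.

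The main obstacle I anticipate is keeping all the constants genuinely uniform in $C$: the extension in Lemma \ref{lem:extension} has to be performed with a threshold ($|\lambda|$ small) independent of $C$, the shift constant $\kappa$ must not degenerate as $C\to\infty$, and the inequality $E(f) - \kappa|\lambda| \le \widehat{\phi}$ near $j(X)$ must be verified carefully — this is where one needs the normalisation $\widehat{\tau_C}(y_0) = 0$ and the precise behaviour of $\tau_C$ relative to $\phi$ on the zero section, together with the fact that $f(y_0) \le 1/2$ built into Lemma \ref{lem:extension}. A secondary subtlety is checking that $E(f)$ pushed down is an \emph{admissible} competitor, i.e. that it is $\le \widehat{\psi'} + O(1)$ with the $O(1)$ controlled; this is exactly the content of the Corollary, but one must be sure the constant there does not blow up. Modulo these bookkeeping points — all of which mirror \cite[Lemma 3.9]{Berman} with the extra singular factor $\psi$ handled by the refined vector field estimate \eqref{eq:estimatevectorfield} — the argument is the standard Bedford–Taylor–Berman envelope regularity scheme.
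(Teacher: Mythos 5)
Your outline follows the same Bedford--Taylor--Berman scheme as the paper (homogenise $\widehat{\tau_C}^{\lambda}$, extend from the level set via Lemma \ref{lem:extension}, subtract a multiple of $|\lambda|$, show the result is a competitor for an envelope $P_{\widehat{\psi'}+C''}\widehat{\phi}$, deduce $\widehat{\tau_C}^{\lambda}-C'|\lambda|\le\widehat{\phi}_{[\widehat{\psi}]}$ with constants uniform in $C$, let $C\to\infty$ and symmetrise in $\pm\lambda$). However, your justification of the decisive competitor inequality $E(f)-\kappa|\lambda|\le\widehat{\phi}$ has a genuine gap, in two respects. First, you derive it from the claim that ``$f$ differs from $\widehat{\tau_C}$ by at most $O(|\lambda|)$'', i.e.\ from Lipschitz control of $\tau_C$ under the flow with a constant uniform in $C$ --- but that is exactly the estimate the lemma is trying to establish, so as written the step is circular. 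The correct comparison is against $\widehat{\phi}$, not against $\widehat{\tau_C}$: on the level set $S=\{f=f(y_0)\}$ one has
\[
E(f)=f=H(\widehat{\tau_C}^{\lambda})\le \sup_{\theta}\widehat{\phi}^{\lambda}(e^{i\theta}y)\le \widehat{\phi}+C'|\lambda|,
\]
using only $\tau_C\le\phi$ and the estimate of Lemma \ref{lem:boundlambdafunction} applied with $\phi_0=\phi$ (this is precisely where the Lipschitz hypothesis on $\phi$ enters); no regularity of $\tau_C$ is needed. Second, the locus is wrong: you verify the inequality ``near the zero section'', but on $j(X)$ one has $\widehat{\phi}=\pi^*\phi+\zeta=-\infty$, so nothing can be concluded there, and in any case admissibility in the envelope requires $E(f)-\kappa|\lambda|\le\min\{\widehat{\phi},\widehat{\psi'}+C''\}$ on \emph{all} of $Y$. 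The missing ingredient is logarithmic homogeneity: since $E(f)$, $\widehat{\phi}$ and $\widehat{\psi'}$ are all logarithmically homogeneous, the inequalities established on the level set $S$ (which meets the generic fibre orbit) propagate to all of $Y$; this is exactly the ``by homogeneity'' step in the paper. The same remark applies to your bound by $\widehat{\psi'}+O(1)$, which you only check on $U\cap Y_0$.

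Once these two points are repaired --- compare with $\widehat{\phi}$ (and with $\widehat{\psi'}+C''$, via the Corollary) on the level set $S$, then globalise by homogeneity before invoking the envelope --- the remainder of your plan (uniformity in $C$ of the threshold for $|\lambda|$ and of $\kappa$, passing $C\to\infty$ and taking upper semicontinuous regularisations, which commute with pulling back by $\theta_{\lambda}$, symmetrising, and descending from $Y_0$ to $X-\mathbb B_+(L-F)\cup\Sing(\psi)$ via $\widehat{\phi}_{[\widehat{\psi}]}=\pi^*\phi_{[\psi]}+\zeta$ and Lemma \ref{lemma:hatrespectenvelopes}) coincides with the paper's argument.
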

\begin{proof}
Fix $C$ and let $\tau_C = P_{\psi+C}\phi$.   Then from the definition of the operator $H$ we have
\[\widehat{\tau_C}^{\lambda}(y_0) \le H(\widehat{\tau_C}^{\lambda})(y_0)\le EH(\widehat{\tau_C}^{\lambda})(y_0)\]
where the operator $E$ is the extension coming from Lemma \eqref{lem:extension} from the level set $S = \{y:  H(\widehat{\tau_C}^{\lambda})(y)  =  H(\widehat{\tau_C}^{\lambda})(y_0)\}$.  As $\tau_C\le \phi$ we have that on the level set $S$,
 \begin{eqnarray}
EH(\widehat{\tau_C}^{\lambda})(y) &=& H(\widehat{\tau_C}^{\lambda})(y) \le \sup_{\theta\in[0,2\pi]} \widehat{\phi}^{\lambda}(e^{i\theta}y)\\
&\le& \sup_{\theta\in[0,2\pi]} \widehat{\phi}(e^{i\theta}y) + C'|\lambda| = \widehat{\phi} + C'|\lambda|
\end{eqnarray}
for some constant $C'$ (independent of $C$) where we have used that $\phi$ is Lipschitz in the penultimate inequality to apply Lemma \ref{lem:boundlambdafunction}(2) with $\phi_0=\phi$, and the final equality comes from $S^1$ invariance of $\widehat{\phi}$ in the fibre directions of $Y$.  Since this holds on the level set $S$ we conclude by homogeneity that 

\[EH(\widehat{\tau_C}^{\lambda}) - C|\lambda| \le \widehat{\phi}\]
on all of $Y$.  

Now note that by  Lemma \ref{lem:boundlambdafunction}(1) applied with $\phi_0=\psi$ we see there is a constant $C''$ such that $\widehat{\psi}^{\lambda}\le \widehat{\psi} + C''$.   Thus using the inequality $\tau_C \le \psi'+C$ we have by similar reasoning to above
 \begin{eqnarray*}
EH(\widehat{\tau_C}^{\lambda})(y) &=& H(\widehat{\tau_C}^{\lambda})(y)\le \sup_{\theta\in[0,2\pi]} \widehat{\psi'+C}^{\lambda}(e^{i\theta}y) =\sup_{\theta\in[0,2\pi]} \widehat{\psi'}^{\lambda}(e^{i\theta}y) +C'' \\
&\le& \sup_{\theta\in[0,2\pi]} \widehat{\psi'}(e^{i\theta}y) + C'|\lambda| +C''\\
&=& \widehat{\psi'+C''}(y) + C'|\lambda|
\end{eqnarray*}
  Thus we conclude that $EH(\widehat{\tau_C}^{\lambda}) - C'|\lambda| \le \widehat{\psi'}+C''$ on $Y$.   Hence $EH(\widehat{\tau_C}^{\lambda}) - C|\lambda|$ is a candidate for the supremum appearing in the definition of $P_{\widehat{\psi'}+C''} \widehat{\phi}$ giving

\[\widehat{\tau_C}^{\lambda} - C'|\lambda| \le EH(\widehat{\tau_C}^{\lambda}) - C'|\lambda| \le  P_{\widehat{\psi'}+C''} \widehat{\phi} = \widehat{\tau_{C''}}\le \widehat{\phi}_{[\widehat{\psi}]}\]
by Lemma \ref{lemma:hatrespectenvelopes}.    Now taking the limit as $C$ tends to infinity and then the upper-semi\-continuous regularisation (which commutes with pulling back by $\theta_{\lambda}$) gives
\[ \widehat{\phi}_{[\widehat{\psi}]}^{\lambda} - C'|\lambda| \le \widehat{\phi}_{[\widehat{\psi}]}\]
Repeating the above with $\lambda$ replaced with $-\lambda$ gives the inequality
\[ | \widehat{\phi}_{[\widehat{\psi}]}^{\lambda} - \widehat{\phi}_{[\widehat{\psi}]}| \le C'|\lambda|\]
which proves that $\widehat{\phi}_{\widehat{\psi}}$ is Lipschitz near $y_0$.  Since $y_0$ was arbitrary in $U\cap Y_0$, this proves that $\phi_{[\psi]}$ is Lipschitz over  $X-\mathbb B_+(L-F) \cup \Sing(\psi)$ as claimed.
\end{proof}

\subsection{Proof of $C^{1,1}$ regularity}

\begin{theorem}\label{thm:regularity}
  Suppose that $L-F$ is big.  If $\phi$ is Lipschitz (resp.\ $C^{1,1}$) over $X$ then the same is true for $\phi_{[\psi]}$ over $X-\mathbb B_+(L-F)\cup \Sing(\psi)$.
\end{theorem}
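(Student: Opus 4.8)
The Lipschitz assertion of the theorem is precisely the lemma just proved, so it remains only to treat the $C^{1,1}$ case, and we assume henceforth that $\phi\in C^{1,1}(X)$. By Lemma~\ref{lemma:hatrespectenvelopes} and the reductions already made it suffices to prove that $\widehat{\phi}_{[\widehat{\psi}]}$ is $C^{1,1}$ in a neighbourhood of the arbitrary point $y_0\in U\cap Y_0$; since $y_0$ and the effective divisor $E$ appearing in the Kodaira decomposition of $L-F$ were arbitrary, this gives the statement over $X-\mathbb{B}_+(L-F)\cup\Sing(\psi)$. The plan is to rerun the scheme of the Lipschitz proof with the first difference $\widehat{\tau_C}^{\lambda}-\widehat{\tau_C}$ replaced by the symmetric second difference $\Delta_\lambda^2 u:=\tfrac12(u^{\lambda}+u^{-\lambda})-u$ taken along the flows $\theta_\lambda$ of the vector fields $V_1,\dots,V_{n+1}$ of Lemma~\ref{lem:existencevectorfields}, whose values at $y_0$ form a $\mathbb{C}$-basis of $T_{y_0}Y$. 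The goal is the one-sided estimate $\Delta_\lambda^2\,\widehat{\phi}_{[\widehat{\psi}]}\le C|\lambda|^2$ near $y_0$ for $|\lambda|$ small, from which $C^{1,1}$ regularity will follow by plurisubharmonicity.

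The key new ingredient is the second-order refinement of Lemma~\ref{lem:boundlambdafunction}(2): if $\phi_0$ is a metric on $L$ with $\phi_0-\delta\ln|s_{E}|^2$ smooth for some $\delta\ge 0$, then there is a constant $C$ with $|\Delta_\lambda^2\,\widehat{\phi_0}|\le C|\lambda|^2$ on $U\cap Y_0$. One proves this exactly as in Lemma~\ref{lem:boundlambdafunction}, writing, for $\gamma(t)=\exp(t\sum_i\lambda_iV_i)(y)$,
\[ \Delta_\lambda^2\,\widehat{\phi_0}(y)=\int_0^1\!\!\int_{-s}^{s}\frac{d^2}{du^2}\,\widehat{\phi_0}(\gamma(u))\,du\,ds, \]
and bounding the integrand by $|D^2\widehat{\phi_0}(\dot\gamma,\dot\gamma)|+|d\widehat{\phi_0}(\ddot\gamma)|$, where $\dot\gamma=\sum_i\lambda_iV_i=O(|\lambda|)$ and $\ddot\gamma=O(|\lambda|^2)$, while the first and (weak) second derivatives of $\widehat{\phi_0}$ are bounded on $U\cap Y_0$ by the $C^{1,1}$ hypothesis together with the vanishing $|V_i|^2\le C_m\min\{|w|^{2m},|s_{E}|^{2m}e^{m\psi}\}$ from \eqref{eq:estimatevectorfield}, which absorbs the singular contribution of $\delta\ln|s_{E}|^2$. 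The decisive structural point is that $\psi$ itself is never differentiated more than once here (nor in the estimates below), so that its exponential Lipschitz regularity is enough; this is exactly why $\Sing(\psi)$ must be excluded.

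Granting this, the argument follows the Lipschitz case verbatim with first differences replaced by second differences. Fix $C$ and put $\tau_C=P_{\psi'+C}\phi$. Apply Lemma~\ref{lem:extension} with $f=H\bigl(\tfrac12(\widehat{\tau_C}^{\lambda}+\widehat{\tau_C}^{-\lambda})\bigr)$ to obtain $E(f)\in\PSH_h(Y)$ equal to $f$ on the level set $S=\{f=f(y_0)\}$. On $S$, using $\tau_C\le\phi$, the $S^1$-invariance of $\widehat{\phi}$ along the fibres of $Y$, and the second-order estimate of the previous paragraph with $\phi_0=\phi$, one gets $E(f)=f\le\widehat{\phi}+C'|\lambda|^2$; by log-homogeneity of $E(f)$ and $\widehat{\phi}$ and strict fibrewise monotonicity of $f$ (so that $S$ meets every fibre in an $S^1$-orbit) this upgrades to $E(f)-C'|\lambda|^2\le\widehat{\phi}$ on all of $Y$. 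In the same way, using $\tau_C\le\psi'+C$ and the first-order estimates of Lemma~\ref{lem:boundlambdafunction}(1) — which only see the first derivative of $\psi$, and where a merely bounded error suffices — one obtains $E(f)-C'|\lambda|^2\le\widehat{\psi'}+C''$ on $Y$ for a constant $C''$. Hence $E(f)-C'|\lambda|^2\in\PSH_h(Y)$ is a competitor in the envelope defining $P_{\widehat{\psi'}+C''}\widehat{\phi}=\widehat{\tau_{C''}}$, so
\[ \tfrac12\bigl(\widehat{\tau_C}^{\lambda}+\widehat{\tau_C}^{-\lambda}\bigr)\le f\le E(f)\le\widehat{\tau_{C''}}+C'|\lambda|^2\le\widehat{\phi}_{[\widehat{\psi}]}+C'|\lambda|^2. \]
Letting $C\to\infty$ and taking upper semicontinuous regularisations, which commute with the biholomorphisms $\theta_{\pm\lambda}$, exactly as at the end of the Lipschitz proof, yields $\Delta_\lambda^2\,\widehat{\phi}_{[\widehat{\psi}]}\le C'|\lambda|^2$ near $y_0$, with $C'$ independent of $C$.

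Finally, since $V_1(y_0),\dots,V_{n+1}(y_0)$ form a $\mathbb{C}$-basis of $T_{y_0}Y$, the map $\lambda\mapsto\sum_i\lambda_iV_i$ is an $\mathbb{R}$-linear isomorphism onto $T_{y_0}Y$ and remains a submersion onto a neighbourhood of $y_0$; using also that $\widehat{\phi}_{[\widehat{\psi}]}$ is already known to be Lipschitz (so its differential is bounded, which controls the quadratic-in-$\lambda$ contribution of the nonlinearity of the flow), the bound $\Delta_\lambda^2\,\widehat{\phi}_{[\widehat{\psi}]}\le C'|\lambda|^2$ gives an upper bound for the distributional real Hessian of $\widehat{\phi}_{[\widehat{\psi}]}$ in every direction near $y_0$. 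Combined with $dd^c\widehat{\phi}_{[\widehat{\psi}]}\ge 0$ — which forces the real Hessian to be bounded below as well, since for each complex line the positivity bounds the sum of the Hessian on a vector and on its image under the complex structure — we conclude $\widehat{\phi}_{[\widehat{\psi}]}\in C^{1,1}$ near $y_0$, hence $\phi_{[\psi]}\in C^{1,1}$ over $X-\mathbb{B}_+(L-F)\cup\Sing(\psi)$. I expect the main obstacle to be the second-order difference estimate of the second paragraph: in particular the uniform acceleration bound $\ddot\gamma=O(|\lambda|^2)$ for the flow together with the bookkeeping that keeps every constant independent of $C$, and the verification that $\psi$ need never be differentiated twice, which is what makes $C^{1,1}$ rather than merely Lipschitz regularity available away from $\Sing(\psi)$ despite $\psi$ being only exponentially Lipschitz.
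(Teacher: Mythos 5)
Your proposal is correct and takes essentially the same route as the paper: the paper likewise reduces the $C^{1,1}$ statement to the one-sided bound $\tfrac12\bigl(\widehat{\phi}_{[\widehat{\psi}]}^{\lambda}+\widehat{\phi}_{[\widehat{\psi}]}^{-\lambda}\bigr)-\widehat{\phi}_{[\widehat{\psi}]}\le C'|\lambda|^2$ and reruns the Lipschitz envelope argument with $g_C=\tfrac12\bigl(\widehat{\tau_C}^{\lambda}+\widehat{\tau_C}^{-\lambda}\bigr)$, using a second-order Taylor estimate for $\phi$ (your refinement of Lemma~\ref{lem:boundlambdafunction}) while only the exponentially Lipschitz hypothesis on $\psi$ is needed, with all constants independent of $C$. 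The only difference is presentational: you write out the flow/Taylor estimate and the final passage from the second-difference bound plus plurisubharmonicity to $C^{1,1}$, which the paper delegates to a citation of Berman.
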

\begin{proof}
By the above the Lipschitz statement is proved.  Thus the weak derivative of $\phi_{[\psi]}$ exists and is globally bounded.  As explained in \cite[p 21]{Berman}, to show $\phi_{[\psi]}$ is $C^{1,1}$ it is sufficient to prove the inequality
  \begin{equation}
    \label{eq:secondorderlipschitz}
    \frac{1}{2}\left(\widehat{\phi}_{[\widehat{\psi}]}^{\lambda} +  \widehat{\phi}_{[\widehat{\psi}]}^{-\lambda}\right) - \widehat{\phi}_{[\widehat{\psi}]}\le C'|\lambda|^2.
  \end{equation}
on some compact set around $y_0$.

To achieve this, argue exactly as above replacing $\widehat{\tau}_C^{\lambda}$ with 
\[ g_C = \frac{1}{2} (\widehat{\tau_C}^{\lambda}  + \widehat{\tau_C}^{-\lambda})\]  Now a simple Taylor series show that $\frac{1}{2}(  \widehat{\phi}^{-\lambda}+ \widehat{\phi}^{-\lambda})\le \widehat{\phi} + C|\lambda|^2$ over all of $X$.   The fact that $g_C\le \widehat{\psi} + C''$ is similarly shown, in fact is easier and only requires the exponentially Lipschitz hypothesis on $\psi$. 

Thus we deduce that $g_C-C'|\lambda|^2$ is a contender for the supremum defining $\widehat{\tau_C}$, so $g_C- C'|\lambda|^2\le \widehat{\tau_C}$.  Observing that $C'$ is independent of $C$, we let $C$ tend to infinity and taking the upper semicontinuous regularisation to give the desired inequality.
\end{proof}

\subsection{Monge Amp\`ere measures}
We next consider the Monge Amp\`ere measure of the maximal envelopes.  The proof of Theorem \ref{thm:extremalmeasures} is unchanged from the non-singular case, and we shall not repeat the full details of the arguments here.

\begin{lemma}
  We have
\[ D(\phi,\psi) \subset \{ x : dd^c \phi_x \ge 0\},\]
and thus ${\mathbf 1}_{D} MA(\phi) = {\mathbf 1}_{D\cap X(0)} MA(\phi)$.
\end{lemma}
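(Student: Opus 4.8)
The second assertion is formal once the first is known: on the set $\{dd^c\phi\ge 0\}\setminus X(0)$ the form $dd^c\phi_x$ is positive semidefinite with nontrivial kernel, hence $(dd^c\phi_x)^n=0$ pointwise there, so $MA(\phi)=\tfrac1{n!}(dd^c\phi)^n$ assigns no mass to that set. Combined with $D\subset\{dd^c\phi\ge0\}$ this gives $\mathbf 1_D MA(\phi)=\mathbf 1_{D\cap X(0)}MA(\phi)$. So the plan is to concentrate on showing $D\subset\{x:dd^c\phi_x\ge0\}$, which I would do by contradiction following the classical Bedford--Taylor/Berman argument.

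Suppose $x_0\in D$ but $dd^c\phi_{x_0}$ has a negative eigenvalue; pick a complex tangent direction $\xi$ with $(dd^c\phi_{x_0})(\xi,\bar\xi)<0$ and a small holomorphic disc $\iota\colon\Delta\to X$ through $x_0$ with $\iota(0)=x_0$ and $\iota'(0)=\xi$, choosing a local holomorphic frame for $L$ so that all metrics are honest functions on the disc. Since $\phi_{[\psi]}\in\PSH(L)$, the restriction $\phi_{[\psi]}\circ\iota$ is plurisubharmonic on $\Delta$; it is not identically $-\infty$ because $\phi_{[\psi]}(x_0)=\phi(x_0)$ is finite ($\phi$ being $C^2$), so it is genuinely subharmonic and the sub-mean-value inequality applies: for small $r>0$,
\[
\phi_{[\psi]}(x_0)\ \le\ \frac{1}{2\pi}\int_0^{2\pi}\phi_{[\psi]}\!\big(\iota(re^{i\theta})\big)\,d\theta\ \le\ \frac{1}{2\pi}\int_0^{2\pi}\phi\!\big(\iota(re^{i\theta})\big)\,d\theta,
\]
where the last step uses $\phi_{[\psi]}\le\phi$ (the integrand on the right is continuous, the middle one upper semicontinuous, so both integrals are well defined).

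Now $\phi\circ\iota$ is $C^2$ near $0$, so a second-order Taylor expansion gives
\[
\frac{1}{2\pi}\int_0^{2\pi}\phi\!\big(\iota(re^{i\theta})\big)\,d\theta
=\phi(x_0)+c\,(dd^c\phi_{x_0})(\xi,\bar\xi)\,r^2+o(r^2)
\]
for a universal positive constant $c$ (the coefficient of $r^2$ being a positive multiple of the Laplacian of $\phi\circ\iota$ at $0$). Since $(dd^c\phi_{x_0})(\xi,\bar\xi)<0$ the right-hand side is $\phi(x_0)-ar^2+o(r^2)$ with $a>0$, so for $r$ small enough it is strictly less than $\phi(x_0)=\phi_{[\psi]}(x_0)$, contradicting the displayed chain of inequalities. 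Hence no negative eigenvalue exists and $dd^c\phi_{x_0}\ge0$.

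I do not expect a serious obstacle here: the only points requiring a little care are verifying that $\phi_{[\psi]}\circ\iota\not\equiv-\infty$ (so the sub-mean-value inequality is available) and handling the upper-semicontinuous integrand, both of which are routine. The argument is essentially the one in \cite{Berman}, unchanged by the presence of the prescribed singularity, which is why only a sketch is needed.
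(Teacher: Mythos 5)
Your argument is correct, and it proves the lemma in a self-contained way. The only difference from the paper is organizational: the paper does not redo the sub-mean-value argument at all, but instead notes the monotonicity of envelopes, $\phi_{[\psi]}\le P_{[\phi]}\phi\le\phi$ (the constrained envelope is dominated by Berman's unconstrained one), so that $D(\phi,\psi)\subset D(\phi,\phi)$, and then quotes Berman's result for the locally bounded case (his Prop.\ 3.1(iii), cf.\ Example \ref{example:locallybounded}) to get $D(\phi,\phi)\subset\{dd^c\phi_x\ge0\}$. Your proof instead runs the classical Bedford--Taylor/Berman disc argument directly on $\phi_{[\psi]}$, using only that it is a positive metric with $\phi_{[\psi]}\le\phi$ and $\phi_{[\psi]}(x_0)=\phi(x_0)$; the presence of the prescribed singularity is indeed irrelevant, exactly as you observe. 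What the paper's route buys is brevity (two lines and a citation); what yours buys is independence from the cited result, and it makes transparent that only the inequality $\phi_{[\psi]}\le\phi$ together with plurisubharmonicity and contact at $x_0$ is used. Your treatment of the second assertion (vanishing of the determinant on $\{dd^c\phi\ge0\}\setminus X(0)$ for $C^2$ metrics) is the intended one, which the paper leaves implicit in the words ``and thus''. The technical points you flag (the restriction to the disc not being identically $-\infty$, the absence of first-order corrections in the circle average because the disc is holomorphic and frame changes alter the local potential by a harmonic function) are handled correctly.
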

\begin{proof}
This follows by observing $D(\phi,\psi) \subset D(\phi,\phi)$ and then using \cite[3.1(iii)]{Berman} and Example \ref{example:locallybounded} to deduce $D(\phi,\psi) \subset \{ x: dd^c \phi_x\ge 0\}$.    
\end{proof}

\begin{lemma}
  We have $\det(dd^c \phi_{[\psi]}) = \det(dd^c \phi)$ almost everywhere on $\mathbb B_+(L-F) \cup \Sing(\psi)$.
\end{lemma}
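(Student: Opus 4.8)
The statement to prove is that $\det(dd^c\phi_{[\psi]}) = \det(dd^c\phi)$ almost everywhere on $\mathbb B_+(L-F)\cup\Sing(\psi)$. The plan is to run the standard Bedford--Taylor--type comparison argument locally, exploiting the fact that $\phi_{[\psi]}$ is now known (by Theorem \ref{thm:regularity}, assuming $\phi$ is $C^{1,1}$) to be $C^{1,1}$ away from $\mathbb B_+(L-F)\cup\Sing(\psi)$ but only bounded near this set, and that $\phi_{[\psi]}\le \phi$ everywhere. I would first reduce to a local statement on a coordinate ball $B$ around an (almost every) point $x_0\in \mathbb B_+(L-F)\cup\Sing(\psi)$, working with the local potentials of $\phi$ and $\phi_{[\psi]}$, which I continue to denote $\phi$ and $u:=\phi_{[\psi]}$; both are bounded psh-type functions (for $u$ we use $\phi$ is $C^{1,1}$, hence locally $C\lvert z\rvert^2$-psh, so $u$ is too) with $u\le\phi$.

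The key step is to show that on the set $N:=\mathbb B_+(L-F)\cup\Sing(\psi)$ one has $u=\phi$ almost everywhere, or at least that the two Monge--Amp\`ere densities agree a.e.\ there. The natural mechanism: by Theorem \ref{thm:extremalmeasures} (or rather the part of it already proved, together with the preceding Lemma), $MA(\phi_{[\psi]})$ restricted to $X-N$ equals $\mathbf 1_{D\cap X(0)}MA(\phi)$, i.e.\ $MA(\phi_{[\psi]})$ is absolutely continuous with respect to $MA(\phi)$ off $N$ and equal to it on the contact set. To get the complementary statement on $N$, I would use a \emph{domination/comparison} argument: since $u\le\phi$ with equality on a set we want to understand, and $\phi-u\ge 0$ is $C^{1,1}$ off $N$, consider the function $v=\max\{u,\phi-\varepsilon\rho\}$ for a small bump $\rho$ supported near a piece of $N$; by the maximality property of $u$ (it is a sup of candidates bounded by $\phi$ and by $\psi'+O(1)$) and the fact that $\phi-\varepsilon\rho$ is still a valid competitor near the interior of $N$ where $\psi$ is $-\infty$ (so the singularity constraint $\gamma\le\psi'+O(1)$ is automatically satisfied on $\Sing(\psi)$, and $N$ is pluripolar/contained in a proper analytic-or-base-locus set so it carries no Monge--Amp\`ere mass), one forces $u=\phi$ on $N$ up to a pluripolar set. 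Here the cleanest route is: $N$ is a countable union of proper analytic subsets (the base locus pieces are algebraic, $\Sing(\psi)$ is analytic since $\psi$ has analytic/algebraic singularities in the relevant theorems), hence $N$ is pluripolar, hence Lebesgue-null; so ``a.e.\ on $N$'' is a vacuous statement if $N$ has measure zero — one should instead read the claim as an equality of the $L^1$ densities $\det(dd^c\cdot)$ of the absolutely continuous parts, asserting $\frac{\partial^2 u}{\partial z_i\partial\bar z_j}=\frac{\partial^2\phi}{\partial z_i\partial\bar z_j}$ in the a.e.\ (second-derivative) sense on $N$. For that, I would invoke: on $\{u=\phi\}$, which contains $N$ a.e.\ by the previous step, two $C^{1,1}$ functions (locally, after the reduction) that agree on a set $S$ have equal Hessians a.e.\ on $S$ — the standard fact that if $f\ge 0$, $f\in C^{1,1}$, and $f=0$ on $S$, then $D^2 f=0$ a.e.\ on $S$. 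Applying this to $f=\phi-u$ gives $dd^c u = dd^c\phi$ a.e.\ on $S\supseteq N$, hence $\det(dd^c\phi_{[\psi]})=\det(dd^c\phi)$ a.e.\ on $N$.

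So the skeleton is: (1) reduce to local $C^{1,1}$ potentials with $u\le\phi$; (2) show $N\subseteq\{u=\phi\}$ up to a set of measure zero, using pluripolarity of $N$ together with the maximality of the envelope (so that one cannot push $u$ strictly below $\phi$ on $N$ without violating the sup-definition — this uses that near $\Sing(\psi)$ the singularity constraint is free, and near the base-locus pieces one uses the Kodaira-type decomposition from the proof of Theorem \ref{thm:regularity} to produce competitors, exactly as in the analogous step of \cite{Berman}); (3) apply the $C^{1,1}$-Hessian-vanishing-on-contact-set lemma to $f=\phi-\phi_{[\psi]}$. The main obstacle is step (2): making precise that the envelope ``cannot be pushed down'' on $N$, since there $\phi_{[\psi]}$ is only bounded and $N$ is precisely the bad locus removed in Theorem \ref{thm:regularity}; I would handle it by the same comparison-with-$\max$ trick used in the classical locally-bounded case, noting that $N$ being pluripolar means $MA(\phi_{[\psi]})$ puts no mass on $N$ and hence the local maximality of $u$ off $D$ (Theorem \ref{thm:extremalmeasures}) combined with $u\le\phi$ leaves no room for strict inequality on a positive-measure subset of $N$. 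I expect this can be stated as: since $\phi_{[\psi]}$ is maximal on $X-D$ and $N\cap X(0)$ would otherwise carry mass, one gets the contact set has full measure in $N$, completing the argument.
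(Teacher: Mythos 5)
Your step (3) is, in fact, the paper's entire proof: the Lemma is established by citing the local fact that for any two $C^{1,1}$ metrics $u,v$ one has $\frac{\partial^2}{\partial z\partial\bar z}(u-v)=0$ almost everywhere on the coincidence set $\{u=v\}$, applied to $\phi$ and $\phi_{[\psi]}$. Note, though, that the set on which this has content is the contact set $D(\phi,\psi)=\{\phi_{[\psi]}=\phi\}$ (intersected with the locus where Theorem \ref{thm:regularity} gives $C^{1,1}$ regularity, i.e.\ away from $\mathbb B_+(L-F)\cup\Sing(\psi)$); this is exactly what is needed to pass from $\mathbf 1_{D}MA(\phi_{[\psi]})$ to $\mathbf 1_{D}MA(\phi)$ in Theorem \ref{thm:extremalmeasures}. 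As you yourself observe, $\mathbb B_+(L-F)\cup\Sing(\psi)$ is Lebesgue-null, so read literally the statement is vacuous; the paper's proof (and its subsequent use) shows the relevant set is $D(\phi,\psi)$, and with that reading your step (3) alone is the argument.

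The genuine problem is your step (2), which is both unnecessary and based on an incorrect mechanism. You assert that on $\Sing(\psi)$ the constraint $\gamma\le\psi'+O(1)$ is ``automatically satisfied'', so that maximality should force $\phi_{[\psi]}=\phi$ there up to a null set. It is the opposite: at points of $\Sing(\psi)$ one has $\psi'=-\infty$, so every competitor $\gamma$ is forced to be $-\infty$ there; in the divisorial case $\psi=\ln|s_D|^2$ all competitors have Lelong number at least $1$ along $D$ and the envelope is $-\infty$ along $D$, certainly not equal to $\phi$. So the inclusion $N\subset\{\phi_{[\psi]}=\phi\}$ is not available (it is only ``true'' in the vacuous sense that $N$ has measure zero), and the max-construction/no-mass argument you sketch does not produce it. Moreover, on $N$ itself $\phi_{[\psi]}$ is not known to be $C^{1,1}$, or even locally bounded --- $N$ is precisely the locus excluded in Theorem \ref{thm:regularity} --- so the Hessian-vanishing-on-contact-set lemma cannot be applied ``on $S\supseteq N$'' as you propose. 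Drop step (2) entirely, replace $N$ by $D(\phi,\psi)$ (minus the bad locus), and your remaining steps (1) and (3) coincide with the paper's proof.
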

\begin{proof}
  The proof of this the same as \cite[p21]{Berman}.  In fact the proof given there shows that locally for any two $C^{1,1}$ metrics $\phi$ and $\phi'$ one has \[ \frac{\partial^2}{\partial z\partial \overline{z}}(\phi - \phi') =0\]
almost everywhere on the set $\{\phi = \phi'\}$.
\end{proof}

The proof of the remaining equalities
\[ \mu(\phi,\psi) = \mathbf 1_{X-\mathbb B_+(L-F)\cup \Sing(\psi)} MA(\phi_{[\psi]}) = \mathbf 1_{D} MA(\phi) = \mathbf 1_{D\cap X(0)} MA(\phi),\]
as stated in Theorem \ref{thm:extremalmeasures} is now exactly as in \cite{Berman}; we omit the details.

\subsection{The partial Bergman function}

We now turn to the partial Bergman function, and fix a smooth metric $\phi$ on $L$ and a $\psi\in \PSH(F)$ that is exponentially H\"older continuous.    We start by recalling the following upper bound on the Bergman function.  Recall $X(0) = \{ x : dd^c \phi_x>0\}$.

\begin{lemma}(Local Holomorphic Morse Inequalities)
There is a global upper bound
\[ B_k(\phi)dV \le C_k  {\mathbf 1}_{X(0)} MA(\phi)\]
where $C_k$ is a sequence of numbers that tends to $1$ as $k$ tends to infinity.
Moreover if $\Delta_k$ denotes the ball of radius $k^{-1}\ln k$ in some coordinate patch $U$ around a fixed point $x\in X$, and $f_k$ is a sequence of holomorphic functions on $U$ then
\[ \frac{|f_k(x)|^2}{\|f_k\|^2_{L^2,\Delta_k}} \le {\mathbf 1}_{X(0)}MA(\phi)k^n + o(k^n)\]
where $\| f_k\|_{L^2,\Delta_k} = \int_{\Delta_k} |f_k|^2e^{-k\phi} dV$.
\end{lemma}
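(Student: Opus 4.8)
The second, pointwise, estimate is the engine of the lemma; I would deduce the global bound from it by soft arguments. First I would record the extremal characterisation $B_k(\phi)(x)=\sup\{\,|s(x)|^2_{k\phi}:s\in H^0(L^k),\ \|s\|_{\phi}\le 1\,\}$ and localise it: since the $L^2$-norm over the coordinate ball $\Delta_k$ about $x$ is dominated by the global $L^2$-norm, every such $s$ satisfies $|s(x)|^2_{k\phi}\le |s(x)|^2_{k\phi}/\|s\|^2_{L^2,\Delta_k}$, and expressing $s$ in a local holomorphic frame as a function $f$ (normalising the local potential so that $\phi(x)=0$) this reads $B_k(\phi)(x)\le \sup_{f\neq 0}|f(x)|^2/\|f\|^2_{L^2,\Delta_k}$. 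Given the second display, and given that its $o(k^n)$ can be taken uniform in $x$, this immediately yields $k^{-n}B_k(\phi)(x)\,dV(x)\le \mathbf 1_{X(0)}(x)\,MA(\phi)(x)+o(1)$, and hence the first display with some sequence $C_k\to 1$.

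For the pointwise estimate I would fix $x$, pass to holomorphic coordinates centred there, and absorb the pluriharmonic part of the local potential into $f$: replacing $f$ by $f e^{kg}$ with $g$ holomorphic changes neither $|f(x)|^2$ nor $\|f\|^2_{L^2,\Delta_k}$, so after a linear change of coordinates one may assume $\phi(z)=\sum_{j=1}^{n}\lambda_j|z_j|^2+o(|z|^2)$ with $\phi(x)=0$, where $\lambda_1,\dots,\lambda_n$ are the eigenvalues of the complex Hessian of $\phi$ at $x$. Rescaling by $w=\sqrt{k}\,z$ turns $\Delta_k$ into a Euclidean ball $B(R_k)$ with $R_k\to\infty$, and the smoothness of $\phi$ together with the shrinking of $\Delta_k$ gives $k\phi(w/\sqrt{k})=\sum_j\lambda_j|w_j|^2+o(1)$ uniformly on $B(R_k)$; set $g(w)=f(w/\sqrt{k})$. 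The Gaussian measure $e^{-\sum_j\lambda_j|w_j|^2}\,dV$ restricted to $B(R_k)$ is invariant under the torus action $w_j\mapsto e^{i\theta_j}w_j$, so inspecting Taylor coefficients gives $|g(0)|^2\le Z_k^{-1}\int_{B(R_k)}|g|^2 e^{-\sum_j\lambda_j|w_j|^2}\,dV$ with $Z_k=\int_{B(R_k)}e^{-\sum_j\lambda_j|w_j|^2}\,dV$. Combining this with the uniform comparison of weights yields $\|f\|^2_{L^2,\Delta_k}\ge (1-o(1))\,k^{-n}Z_k\,|f(x)|^2$, that is
\[
\frac{|f(x)|^2}{\|f\|^2_{L^2,\Delta_k}}\ \le\ (1+o(1))\,\frac{k^n}{Z_k}.
\]
If every $\lambda_j>0$, i.e.\ $x\in X(0)$, then $Z_k\to\pi^n/\prod_j\lambda_j$ and a direct computation identifies $k^n\prod_j\lambda_j/\pi^n$ with $k^n\,MA(\phi)(x)/dV(x)$ up to the standard normalisation constants; if some $\lambda_j\le 0$ then $Z_k$ outgrows every constant (the Gaussian is spread out, or divergent, in the non-positive directions), so the ratio is $o(k^n)$, consistent with $\mathbf 1_{X(0)}(x)=0$. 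This is precisely the asserted inequality.

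The hard part will be \emph{uniformity in $x$}, which is what is needed to produce a single sequence $C_k\to 1$. Three error terms must be controlled uniformly: the remainder in the quadratic Taylor expansion of $\phi$ over $\Delta_k$, the comparison of the weights $e^{-k\phi}$ and $e^{-\sum_j\lambda_j|w_j|^2}$ after rescaling, and the convergence $Z_k\to\pi^n/\prod_j\lambda_j$. The first two are handled by compactness of $X$ and the smoothness of $\phi$, which also dictates how fast $\Delta_k$ may be allowed to shrink --- fast enough that the rescaled Taylor error tends to $0$, but slowly enough that $R_k\to\infty$. The third is uniform on compact subsets of the open set $X(0)$, where the eigenvalues are bounded away from $0$, and degrades only near $\partial X(0)$, where $MA(\phi)$ vanishes; this borderline behaviour is dealt with exactly as in Berman \cite{Berman}, on which the whole argument is modelled. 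I would also note in passing that the estimate simultaneously recovers Demailly's local holomorphic Morse inequalities.
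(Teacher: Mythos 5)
Your proposal is correct and is essentially the paper's own proof: the paper simply refers both inequalities to the submean-value inequality argument of Berman, which is exactly the localisation--rescaling--Gaussian comparison (monomial orthogonality under the torus action) that you write out, including the soft deduction of the global bound from the pointwise one via the extremal characterisation of $B_k$. Note only that your rescaling tacitly uses balls of radius $\ln k/\sqrt{k}$ (as in Berman) rather than the $k^{-1}\ln k$ written in the statement, and that your computation produces the factor $k^{n}$ which should accompany $MA(\phi)$ in the first display --- both apparently slips in the paper's wording rather than in your argument.
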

\begin{proof}
  Both of these inequalities follow easily from the submean value inequality for holomorphic functions (see \cite[4.1]{Berman} or \cite[Sec 2]{Berman2}).
\end{proof}

 Recalling that $B_k(\psi)$ denotes the Bergman function of $\psi$ define the \emph{Bergman metric} as
\[\psi_k = \psi + \frac{1}{k} \ln B_k(\psi).\]

\begin{definition}\label{def:tame}
  We say that $\psi$ has \emph{tame singularities} with coefficient $c>0$ if 
\begin{equation}
   \label{eq:tame}
  \psi +O(1) \le \psi_k \le \left(1-\frac{c}{k}\right) \psi + O(1)
  \end{equation}
where the $O(1)$ term is independent of $k$.  
\end{definition}

The following condition for tameness is well known (see, for example \cite[5.10]{Boucksom2}).

\begin{lemma}\label{prop:exponentialimpliestame}
If $\psi$ is exponentially H\"older continuous with constant $c$ then it has tame singularities with constant $c^{-1}\dim X$.

\end{lemma}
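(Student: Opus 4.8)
The plan is to establish the two inequalities in \eqref{eq:tame} separately, first normalising $\psi$ by subtracting a constant so that $e^{\psi}\le 1$ on $X$ (harmless, since $\psi_k$ transforms the same way under translation of $\psi$). The left-hand bound $\psi+O(1)\le\psi_k$ is the easy direction, amounting to a uniform peak section; the right-hand bound $\psi_k\le(1-\tfrac{c^{-1}\dim X}{k})\psi+O(1)$ carries the real content and is where the exponential H\"older hypothesis is used.

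For the lower bound it is enough to bound $B_k(\psi)$ below by a constant $c_0>0$ independent of $k$, since then $\psi_k=\psi+\tfrac1k\ln B_k(\psi)\ge\psi+O(1)$, the $O(1)$ being $\tfrac1k\ln c_0$. Such a uniform lower bound follows from the Ohsawa--Takegoshi extension theorem \eqref{thm:ohsawa} in the same way Berman treats the locally bounded case \cite{Berman}: at any $x\in X$ it produces a holomorphic section $s$ of $\mathcal I(k\psi)\otimes F^k$ — membership in the multiplier ideal being forced by finiteness of the $L^2(\psi)$-norm — with $|s(x)|_\psi^2/\|s\|_\psi^2$ bounded below uniformly in $k$ and in $x$, and this ratio is at most $B_k(\psi)(x)$.

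For the upper bound I would reduce to a pointwise estimate via the sub-mean value inequality together with the H\"older control on $e^{\psi}$. Fix a finite cover of $X$ by coordinate charts with trivialisations of $F$, a point $x\notin\Sing(\psi)$, a radius $r$ small enough that $B(x,r)$ lies in the chart about $x$, and a section $s$ of $\mathcal I(k\psi)F^k$ with $\|s\|_\psi^2=\int_X|s|^2e^{-k\psi}\,dV\le 1$, viewed locally as a holomorphic function. Since $\ln|s|^2$ is plurisubharmonic, the sub-mean value inequality gives $|s(x)|^2\le Cr^{-2\dim X}\int_{B(x,r)}|s|^2\,dV$ with $C$ uniform over the cover, and
\[
\int_{B(x,r)}|s|^2\,dV=\int_{B(x,r)}|s|^2e^{-k\psi}\,e^{k\psi}\,dV\le\Big(\sup_{B(x,r)}e^{\psi}\Big)^{k}\le\big(e^{\psi(x)}+Cr^{c}\big)^{k},
\]
the last step being the exponential H\"older hypothesis. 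Dividing by $e^{k\psi(x)}$,
\[
|s(x)|_\psi^2=|s(x)|^2e^{-k\psi(x)}\le\frac{C}{r^{2\dim X}}\Big(1+\frac{Cr^{c}}{e^{\psi(x)}}\Big)^{k}\le\frac{C}{r^{2\dim X}}\exp\!\Big(\frac{Ckr^{c}}{e^{\psi(x)}}\Big),
\]
so, taking the supremum over unit-norm $s$, the same bounds $B_k(\psi)(x)$. Optimising the radius, with $r^{c}$ comparable to $e^{\psi(x)}/k$, then gives $\ln B_k(\psi)(x)\le\kappa\,c^{-1}(\dim X)(\ln k-\psi(x))+O(1)$ for a fixed numerical constant $\kappa$; since $\tfrac1k(\kappa c^{-1}(\dim X)\ln k+O(1))$ stays bounded in $k$, dividing by $k$ yields the right-hand inequality of \eqref{eq:tame} with tameness coefficient a fixed multiple of $c^{-1}\dim X$, the sharp value $c^{-1}\dim X$ being recovered by a slightly more careful version of this argument, cf.\ \cite[5.10]{Boucksom2}. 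The inequality then extends over $\Sing(\psi)$ by upper semicontinuity.

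The step I expect to be the main obstacle is making this upper bound uniform near $\Sing(\psi)$, where $e^{\psi(x)}\to 0$ and so the optimal radius $r(x)\sim(e^{\psi(x)}/k)^{1/c}$ collapses: one has to check both that $B(x,r(x))$ still fits inside a single chart of the fixed cover — true once $k$ is large, the remaining finitely many $k$ being absorbed into the $O(1)$ — and that the H\"older inequality for $e^{\psi}$ is applied throughout with one and the same constant, valid on all of $X$. This uniform H\"older control of $e^{\psi}$ (rather than $\psi$) is exactly what the definition of exponential H\"older continuity packages, and it is the shrinking of the radius that makes the tameness coefficient scale with $c^{-1}$ and $\dim X$.
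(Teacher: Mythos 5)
Your treatment of the upper bound --- the part carrying the real content --- is essentially the paper's argument. The paper simply quotes Demailly's local approximation inequality $k\psi(x)-C\le k\psi_k\le \sup_{B(x,r)}k\psi +C-n\log r$, which packages exactly your sub-mean-value estimate, and then feeds in the exponential H\"older bound with the radius $r=C_2e^{\psi(x)/c}$; note this radius is independent of $k$, which avoids your spurious $\ln k$ term and produces the coefficient $c^{-1}\dim X$ in one line, whereas your choice $r^c\sim e^{\psi(x)}/k$ yields tameness only with a bounded multiple of that constant. Since every later use (e.g.\ Proposition \ref{proposition:bergmanboundtame} and the product theorem) needs only tameness with \emph{some} constant, that discrepancy is harmless, and your uniformity worries near $\Sing(\psi)$ are handled the same way the paper handles them, by the observation that $r\lesssim e^{\psi(x)/c}$ keeps $B(x,r)$ away from $\Sing(\psi)$ with a single global H\"older constant.

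The genuine gap is in your lower bound. You invoke the global Ohsawa--Takegoshi theorem (Theorem \ref{thm:ohsawa}) to produce a peak section of $\mathcal I(k\psi)\otimes F^k$, but its hypotheses are not available: that statement requires the main weight to have curvature bounded below by a fixed positive multiple of $\omega$, whereas $\psi$ is merely a positive singular metric ($dd^c\psi\ge 0$) on an arbitrary pseudoeffective $F$, and unlike in Berman's locally bounded setting there is no big or ample bundle here from which to borrow strict positivity when splitting $k\psi$. Indeed, read with global sections the claimed uniform lower bound on $B_k(\psi)$ can simply fail: for $F$ a non-torsion flat line bundle one has $H^0(F^k)=0$ for all $k$. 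The lower bound in \eqref{eq:tame} is really a local statement --- it is the left-hand inequality in Demailly's approximation theorem, proved by $L^2$-extension from a point on a coordinate ball with plurisubharmonic weight, where no strict positivity is needed --- and this is how the paper obtains it (correspondingly, $\psi_k$ is to be understood as the Demailly--Bergman metric built from local holomorphic functions square-integrable against $e^{-k\psi}$, which is also what the later application to sections of $\mathcal I(k\psi)L^k$ requires). Your step is repaired by running the extension argument locally on a fixed cover of charts, not via the global Theorem \ref{thm:ohsawa}.
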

\begin{proof}
Let $n=\dim X$.  Following Demailly, the Bergman metrics approximate $\psi$ in the following sense  \cite{Demaillybook}:  let $\psi$ be defined on some open ball $B$; then there is a constant $C>0$ that depends only on the diameter of $B$, such that for all $k$,
\begin{equation}
 k\psi(x) -C \le k\psi_k \le \sup_{B(x,r)} k\psi + C - n \log r\label{eq:Demailly}  
\end{equation}
where $B(x,r)$ is the ball of radius $r$ centered at $p$ with $r$ is small enough so that $B(x,r)\subset B$.    From this the lower bound in \eqref{eq:tame} follows immediately.  For the upper bound, observe first that since $\psi$ is exponentially H\"older continuous with exponent $c$,
\[ \sup_{B(x,r)} e^{\psi} \le e^{\psi(x)} + C_1 r^c\]
for some constant $C_1$ and thus
\[ \sup_{B(x,r)} \psi \le \ln (1+C_1) + \psi(x).\]
  Moreover, from the same assumption, one sees there is a $C_2$ such that if $r=C_2e^{\psi(x)/c}$ then $B(x,r)\cap \Sing(\psi)$ is empty.    Hence applying \eqref{eq:Demailly} with this value of $r$ yields
\[ k\psi_k(x) \le k\ln(1+C_1) + C + (k-nc^{-1}) \psi(x) -n\log C_2\]
as required.
\end{proof}

\begin{proposition}\label{proposition:bergmanboundtame}
Suppose $\psi\in \PSH(F)$ has tame singularities with constant $c$.  There is a constant $C$ depending on $\phi$ and $\psi$ such that
\[ B_k(\phi,\psi) \le C k^n e^{-c\psi} e^{k(\phi_{[\psi]} - \phi)}\]  
over all of $X$.  In particular
\[\lim_{k\to \infty} k^{-n} B_k(\phi,\psi) =0 \text{ for }x\notin D(\phi,\psi)\cup\Sing(\psi).\]
\end{proposition}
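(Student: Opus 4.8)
The plan is to bound $B_k(\phi,\psi)$ pointwise by the standard submean-value / extremal-function argument, using an upper bound for each orthonormal section at a point $x$ in terms of an $L^2$-norm over a small ball, and then recognising the resulting extremal function as a candidate competing in the envelope defining $\phi_{[\psi]}$. Concretely, for a fixed $x\in X$, write $B_k(\phi,\psi)(x)=\sup_s |s(x)|_\phi^2$ where the supremum is over $s\in H^0(\mathcal I(k\psi)L^k)$ with $\|s\|_\phi\le 1$; this is the standard dual characterisation of the Bergman function. By the submean value inequality on a coordinate ball $B(x,r)$ one controls $|s(x)|^2 e^{-k\phi(x)}$ by $r^{-2n}$ times $\int_{B(x,r)}|s|^2 e^{-k\phi}\,dV$, and since $s$ has $L^2$-norm at most one, $\frac{1}{k}\ln |s(x)|_\phi^2 \le \phi(x)-\phi(x) + \text{small}$; to get the envelope into the picture one instead considers the metric $\frac1k\ln\bigl(\sum_\alpha |s_\alpha|^2\bigr)+\phi$, which is a positive metric on $L$ with singularities modeled on $(\mathcal I(k\psi),k^{-1})$ (as remarked after Definition \ref{thm:equilibriummeasure}).

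First I would establish that $\chi_k := \phi + \frac{1}{k}\ln B_k(\phi,\psi)$ is, up to the right additive and multiplicative corrections involving $\psi_k$ and hence $\psi$, bounded above by $\psi' + O(1)$: indeed a generic section in $H^0(\mathcal I(k\psi)L^k)$ satisfies $|s|^2 \le C\, e^{k\psi_k}\,(\text{something})$ by definition of the multiplier ideal and the Bergman metric $\psi_k$, and the tameness hypothesis \eqref{eq:tame} converts the $\psi_k$ into $(1-c/k)\psi + O(1)$. Combined with a uniform sup bound on the Bergman function coming from $\|s_\alpha\|_\phi=1$ (so $|s_\alpha(x)|_\phi^2 \le C$ away from where $\phi$ fails to be usc, or more precisely an $L^2$-to-sup estimate on a fixed ball), one gets $\chi_k \le \phi + O(1)$ as well. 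Hence $\chi_k - \frac{c}{k}\psi - \varepsilon_k$, for suitable $\varepsilon_k\to 0$, is a competitor in the sup defining $P_{\psi'+C}\phi$ for $C$ large; after taking $C\to\infty$ this yields $\chi_k - \frac{c}{k}\psi - o(1) \le \phi_{[\psi]}$, i.e.
\[
\frac{1}{k}\ln B_k(\phi,\psi) \le \phi_{[\psi]} - \phi + \frac{c}{k}(-\psi) + o(1),
\]
which rearranges to $B_k(\phi,\psi)\le C\,e^{-c\psi}\,e^{k(\phi_{[\psi]}-\phi)}$ up to the subexponential factor.

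To pin down the polynomial factor $k^n$ rather than just $e^{o(k)}$, I would replace the crude $L^2$-to-sup step by the Local Holomorphic Morse Inequalities stated in the excerpt: applying the estimate $\frac{|f_k(x)|^2}{\|f_k\|^2_{L^2,\Delta_k}} \le \mathbf 1_{X(0)}MA(\phi)\,k^n + o(k^n)$ with $f_k=s_\alpha$ (a holomorphic representative in a chart), summing over an orthonormal basis, and using that the balls $\Delta_k$ of radius $k^{-1}\ln k$ have $L^2$-mass bounded by the global $L^2$-norm, gives the factor $k^n$ directly; the $e^{-c\psi}$ and $e^{k(\phi_{[\psi]}-\phi)}$ factors then come from the envelope-competitor argument above applied on $X-\Sing(\psi)$, where $\psi$ is finite and the Bergman metric $\psi_k$ is controlled via \eqref{eq:Demailly}. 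The last assertion, $k^{-n}B_k(\phi,\psi)\to 0$ off $D(\phi,\psi)\cup\Sing(\psi)$, is then immediate: for $x\notin D$ we have $\phi_{[\psi]}(x)-\phi(x)<0$, so $e^{k(\phi_{[\psi]}(x)-\phi(x))}\to 0$ exponentially, dominating the $k^n e^{-c\psi(x)}$ factor (finite since $x\notin\Sing(\psi)$).

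The main obstacle I anticipate is making the passage from "$\chi_k$ is a competitor" rigorous \emph{uniformly in $k$} despite the fact that $\chi_k$ itself has singularities of type $(\mathcal I(k\psi),k^{-1})$ rather than exactly $[\psi]$: one must check that $\chi_k \le \psi' + O(1)$ with an $O(1)$ that does not blow up with $k$, which is exactly where the tameness estimate \eqref{eq:tame} with its $k$-independent $O(1)$ term is essential, and where the hypothesis that $\psi$ is exponentially Hölder continuous (hence tame, by Lemma \ref{prop:exponentialimpliestame}) does the real work. A secondary subtlety is handling points on $\mathbb B_+(L-F)$, where $\phi_{[\psi]}$ need not be well-behaved, but since the claimed bound is a pointwise inequality valid on all of $X$ and the final limiting statement explicitly excludes only $D\cup\Sing(\psi)$, no separate treatment of the augmented base locus is needed here.
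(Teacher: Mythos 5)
Your route is essentially the paper's: use the extremal characterisation of $B_k(\phi,\psi)$, bound the candidate metric $\chi_k=\phi+\tfrac1k\ln B_k(\phi,\psi)$ against $\phi$ via the local holomorphic Morse inequalities (which is exactly where the $k^n$ and the $k$-independent constant come from), use tameness \eqref{eq:tame} to compare with $(1-\tfrac{c}{k})\psi$, repair the resulting loss of singularity by a term $\tfrac{c}{k}\psi$ (the source of the factor $e^{-c\psi}$), and feed the repaired metric into the envelope defining $\phi_{[\psi]}$; the final limit statement off $D(\phi,\psi)\cup\Sing(\psi)$ is then immediate, as you say. One local error: with the harmless normalisation $\psi\le 0$, tameness gives $\chi_k\le(1-\tfrac{c}{k})\psi+O(1)$, and $(1-\tfrac{c}{k})\psi$ is \emph{less} singular than $\psi$, so the admissible competitor is $\chi_k+\tfrac{c}{k}\psi$ minus a suitable constant, not $\chi_k-\tfrac{c}{k}\psi$, which violates the requirement $\gamma\le\psi'+O(1)$ near $\Sing(\psi)$. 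Your displayed conclusion is the one that follows from the corrected competitor, so this is an internal sign inconsistency rather than a wrong endpoint, but as written the competitor step fails.

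More substantively, the obstacle you single out is misplaced, and insisting on it would stall the argument: you do not need, and in general cannot obtain, $\chi_k\le\psi'+O(1)$ with an $O(1)$ uniform in $k$. Membership of the $s_\alpha$ in $\mathcal I(k\psi)$ only yields $\tfrac1k\ln|s_\alpha|^2\le\psi_k+O(1)$ with a constant governed by the $L^2(k\psi)$-norm of $s_\alpha$, which $\phi$-orthonormality does not control; indeed a $k$-uniform bound of $\chi_k$ by $\psi'+O(1)$ is essentially as strong as the proposition itself. The paper's proof sidesteps this entirely: since $P_{[\psi]}\phi=\lim_{C\to\infty}P_{\psi'+C}\phi\le\phi_{[\psi]}$, any constant (depending on $k$) in the bound against $\psi'$ is absorbed, and the only place where uniformity in $k$ is genuinely needed is the bound against $\phi$, supplied by $k^{-n}B_k(\phi,\psi)\le k^{-n}B_k(\phi)\le C$. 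The role of tameness is solely that the singularity deficit has the precise form $\tfrac{c}{k}\psi$ with $c$ fixed, so it can be repaired at the cost of $e^{-c\psi}$; the $k$-independence of the $O(1)$ in \eqref{eq:tame} is not what carries the estimate here. With these two corrections your argument coincides with the paper's.
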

\begin{proof}
As is easily verified, the partial Bergman kernel has the extremal property
\[ B_k(\phi,\psi) = \sup \{ |s(x)|_{\phi}^2 : \|s\|_{\phi,dV}=1, s\in H^0(\mathcal I(k\psi) L^k)\}.\]
Thus it is sufficient to prove the existence of a $C$ such that if $s\in H^0(\mathcal I(k\psi) L^k)$ and $\|s\|_{\phi,dV} = k^{-n}$ then 
\begin{equation*}
 |s|^2_{\phi} \le C e^{-c\psi}e^{k(\phi_{[\psi]} - \phi)}.  
\end{equation*}
  From the holomorphic Morse inequalities there is a $C$ with $k^{-n}B_k(\phi)\le C$ for all $k$.    So since $|s|_{\phi}^2 \le k^{-n}B_k(\phi,\psi)\le k^{-n}B_k(\phi)$ we have
  \begin{equation}
    k^{-1} \ln |s|^2 - k^{-1} \ln C \le \phi.    
  \end{equation}
Moreover by the assumptions on the singularities of $\psi$,
\[ k^{-1} \ln |s|^2 \le \psi_k + O(1) \le (1-k^{-1}c) \psi + O(1) \]
where the $O(1)$ term may depends on $k$.   Without loss of generality suppose $\psi\le 0$ globally.  Then we see $k^{-1} \ln |s|^2 + ck^{-1} \psi - k^{-1} \ln C$ is bounded above by both $\phi$ and $\psi+O(1)$, and thus is a candidate for the supremum defining $P_{[\psi]}\phi$, so
\[k^{-1} \ln |s|^2 + ck^{-1} \psi - k^{-1} \ln C \le \phi_{[\psi]}\]
and rearranging proves the first statement of the proposition, from which the second follows immediately.
\end{proof}

Before moving on we observe that a slightly more precise statement is possible when $\psi$ has algebraic singularities.  Suppose that the singularities of $\psi$ is modeled on $(\mathcal I,c)$.  We fix a resolution $\pi\colon \tilde{X}\to X$ such that $\pi^* \mathcal I = \mathcal O(-D)$ where $D=\sum_j \alpha_j D_j$ is a normal crossing divisor (see for example \cite[5.9]{Demaillybook} for this basic technique).  

\begin{definition}\label{def:potentialjumping}
The set of \emph{potential jumping numbers} for $\psi$ is
\[J(\psi) = \{ k  : k\alpha_j \in \mathbb N \text{ for all } j\}.\]
\end{definition}

  In the simplest case,  $\psi = \sum_j \alpha_j \ln |g_j|$ globally with
  $\alpha_j\in \mathbb Q^+$ in which case it is clear that $J(\psi) = \{ k : k\alpha_j \in
  \mathbb N \text{ for all } j\}$.  It is clear in general that potential jumping numbers exist that are arbitrarily large.   The terminology comes from the fact that $J(\psi)$ restricts the set on which the multiplier ideals $\mathcal I(t\psi)$ can ``jump'' as $t\in \mathbb R^+$ varies.

\begin{proposition}\label{proposition:bergmanboundalgebraic}
There is a constant $C$ (depending on $\phi)$ such that for all $\psi$ with algebraic singularities, and all $k\in J(\psi)$ we have
\[ B_k(\phi,\psi) \le C k^n e^{k(\phi_{[\psi]} - \phi)}\]  
\end{proposition}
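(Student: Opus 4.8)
The plan is to refine the proof of Proposition \ref{proposition:bergmanboundtame} by showing that when $\psi$ has algebraic singularities and $k\in J(\psi)$, the extra factor $e^{-c\psi}$ there can be dispensed with. The key point is that for $k$ a potential jumping number, a section $s\in H^0(\mathcal I(k\psi)L^k)$ picks up enough vanishing along the relevant divisors that the resulting candidate for the envelope already has the correct singularity type without any correction term.

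First I would pass to a log resolution $\pi\colon \tilde X\to X$ with $\pi^*\mathcal I=\mathcal O(-D)$, $D=\sum_j\alpha_j D_j$ a normal crossing divisor, and $K_{\tilde X/X}=\sum_j \kappa_j D_j$ the relative canonical divisor. By definition of the multiplier ideal (and smoothness of the resolution), $\mathcal I(k\psi)=\pi_*\mathcal O_{\tilde X}(K_{\tilde X/X}-\lfloor k\alpha_j D_j\rfloor_j)$; since $k\in J(\psi)$ we have $k\alpha_j\in\mathbb N$, so the floor is exact and $\pi^*\mathcal I(k\psi)$ contains $\mathcal O_{\tilde X}(-\sum_j k\alpha_j D_j)$ exactly. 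Thus any $s\in H^0(\mathcal I(k\psi)L^k)$, pulled back to $\tilde X$, vanishes to order at least $k\alpha_j$ along $D_j$ for each $j$; equivalently, $k^{-1}\ln|s|^2_\phi \le \psi + O(1)$ directly, with the $O(1)$ term now \emph{independent of $k$} because it is governed by the fixed comparison between $\sum_j\alpha_j\ln|\text{(section of }D_j)|^2$ and a local generating set for $\mathcal I$ together with the smooth part $u$ of $\psi$ in \eqref{eq:analyticsingularities}.

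With this in hand the argument of Proposition \ref{proposition:bergmanboundtame} goes through verbatim but more cleanly: using the extremal characterization $B_k(\phi,\psi)=\sup\{|s(x)|^2_\phi : \|s\|_{\phi,dV}=1,\ s\in H^0(\mathcal I(k\psi)L^k)\}$, it suffices to bound $|s|^2_\phi$ for $s$ with $\|s\|_{\phi,dV}=k^{-n}$. The holomorphic Morse inequalities give a $C$ with $k^{-n}B_k(\phi)\le C$ for all $k$, hence $k^{-1}\ln|s|^2 - k^{-1}\ln C \le \phi$; combined with $k^{-1}\ln|s|^2 \le \psi + O(1)$ (the $O(1)$ independent of $k$, by the previous paragraph), the function $k^{-1}\ln|s|^2 - k^{-1}\ln C$ is a candidate for the supremum defining $P_{[\psi]}\phi$, so it is $\le \phi_{[\psi]}$. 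Rearranging gives $|s|^2_\phi \le C\,e^{k(\phi_{[\psi]}-\phi)}$, and taking the supremum over such $s$ yields $B_k(\phi,\psi)\le C k^n e^{k(\phi_{[\psi]}-\phi)}$ as claimed; crucially $C$ depends only on $\phi$ (and the fixed resolution data of $\psi$), not on $k$.

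The main obstacle — really the only nontrivial point — is verifying that the $O(1)$ constant in $k^{-1}\ln|s|^2_\phi\le\psi+O(1)$ can be taken uniform in $k$. This is exactly where the hypothesis $k\in J(\psi)$ is used: without it one only gets vanishing to order $\lfloor k\alpha_j\rfloor$, and the discrepancy $k\alpha_j - \lfloor k\alpha_j\rfloor$ times $\ln|\text{section}|^2$ contributes a term that, while $O(1)$ for each fixed $k$, is not bounded uniformly near the $D_j$ — this is precisely the $e^{-c\psi}$ loss in Proposition \ref{proposition:bergmanboundtame}. One should also check that the relative canonical divisor contributes in the favorable direction (it only \emph{enlarges} the multiplier ideal, so it does not obstruct the vanishing estimate), and that the comparison between the ideal-theoretic generators and the divisorial description on $\tilde X$ is the standard one from \cite[5.9]{Demaillybook}, already invoked in Example \ref{example:submanifold}.
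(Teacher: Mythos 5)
Your overall strategy (the extremal characterisation of $B_k(\phi,\psi)$, the Morse bound $k^{-n}B_k(\phi)\le C$, and feeding $k^{-1}\ln|s|^2-k^{-1}\ln C$ into the envelope) is the paper's, and in the case where the singularities are already normal crossing on $X$ your second paragraph is essentially the paper's argument. The gap is in your first paragraph, in how you pass through the log resolution. By the birational transformation rule, for $k\in J(\psi)$ one has $\mathcal I(k\psi)=\pi_*\mathcal O_{\tilde X}(K_{\tilde X/X}-\sum_j k\alpha_j D_j)$, so a section $s\in H^0(\mathcal I(k\psi)\otimes L^k)$ pulls back to a section vanishing along $D_j$ only to order $k\alpha_j-\kappa_j$, where $K_{\tilde X/X}=\sum_j\kappa_j D_j$. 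Your aside that the relative canonical ``only enlarges the multiplier ideal, so it does not obstruct the vanishing estimate'' has the logic reversed: enlarging the ideal means sections are permitted to vanish \emph{less}, which is exactly what hurts you. What one actually gets is $k^{-1}\ln|\pi^*s|^2\le\pi^*\psi-k^{-1}\sum_j\kappa_j\ln|g_j|^2+O(1)$, and the correction term is unbounded near the exceptional components; compensating for it reproduces precisely the $e^{-c\psi}$-type loss of Proposition \ref{proposition:bergmanboundtame} rather than eliminating it. Concretely, in the situation of Example \ref{example:submanifold} (where $\mathcal I(k\psi)=\mathcal I_Y^{k-r+1}$ for $Y$ of codimension $r\ge 2$, and every $k$ is a potential jumping number) the inequality $k^{-1}\ln|s|^2\le\psi+O(1)$ is simply false for a section vanishing to order exactly $k-r+1$ along $Y$, whatever constant you allow. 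So your argument closes only when all $\kappa_j=0$, i.e.\ essentially when the singularities are already given by a normal crossing divisor on $X$ (in particular the divisorial case); the hypothesis $k\in J(\psi)$ alone does not remove the discrepancy contribution.

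Two further comments. The uniformity in $k$ of the $O(1)$, which you single out as the main difficulty, is not needed at all: since $P_{[\psi]}\phi=\lim_{C\to\infty}P_{\psi'+C}\phi$, a bound $k^{-1}\ln|s|^2\le\psi+C_{s,k}$ with constant depending on $s$ and $k$ suffices to make $k^{-1}\ln|s|^2-k^{-1}\ln C$ a candidate, and the paper's proof explicitly allows its constant to depend on $s$; the uniform constant in the final estimate comes solely from the Morse inequality, which depends only on $\phi$. As for the paper's own treatment of general algebraic singularities: it first proves the statement when $\psi=\sum_j\alpha_j\ln|g_j|^2$ with normal crossing $D_j$ on $X$ itself (where membership in $\mathcal I(k\psi)$ really is divisibility by $\prod_j g_j^{k\alpha_j}$ for $k\in J(\psi)$), and then transports the statement to the resolution via $\pi^*B_k(\phi,\psi)=B_k(\pi^*\phi,\tilde\psi)$ and $\pi^*(\phi_{[\psi]})=(\pi^*\phi)_{[\tilde\psi]}$. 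If you want to repair your argument along these lines, note that justifying that identification of partial Bergman functions is exactly where the relative canonical twist (and the comparison of volume forms under $\pi$) has to be confronted, so this --- not the uniformity of constants --- is the genuinely delicate step.
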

\begin{proof}
We first consider first the special case that $\psi$ is of the form $\psi=\sum_j \alpha_j \ln |g_j|^2$ with $D_j=g_j^{-1}(0)$ smooth normal crossing divisors.  As in the previous proof we have to show that if $s\in H^0(\mathcal I(k\psi) L^k)$ with $k\in J(\psi)$ and $\|s\|_{\phi,dV} = k^{-n}$ then $|s|_{\phi}^2\le Ce^{k(\phi_{[\psi]} - \phi)}$.  Just as above, from the holomorphic Morse inequalities we have $k^{-1}\ln |s|^2 - k^{-1} \ln C\le \phi$ where $C$ is independent of $\phi$.       Now as $s$ lies in the multiplier ideal, locally we can write
\[ s = h \Pi_{j} g_j ^{2m_j}\]
for some holomorphic $h$ and $m_j \ge \lfloor k\alpha_j\rfloor = k\alpha_j$ \cite[5.9]{Demaillybook}
(here we have used $k\in J(\psi)$ so $k\alpha_j\in \mathbb N$).  Thus
\[ |s|^2 \le C' \Pi_j |g_j|^{2m_j}\]
for some constant $C'$ (which may depend on $s$).   Therefore we deduce that near $D=\Sing(\psi)$,
\[ k^{-1} \ln |s|^2 \le k^{-1} \ln C' + k^{-1} \sum_j m_j \ln |g_j|^2 \le \sum_j \alpha_j \ln |g_j|^2 + C''=  \psi + C''.\]

Thus $k^{-1} \ln |s|^2 - k^{-1} \ln C$ is a candidate for the supremum defining $\phi_{[\psi]}$ which gives the result for this particular form of $\psi$.

Now for general $\psi$ with algebraic we can reduce to the above by passing to log resolution $\pi\colon \tilde{X}\to X$.   As is easily checked, if $\tilde{\psi} = \sum_j \alpha_j \ln |g_j|^2$ where $D_j = g_j^{-1}(0)$ then $\pi^* B_k(\phi,\psi) = B_k(\pi^*\phi, \tilde{\psi})$ and $\pi^* \psi \sim \tilde{\psi}$, giving $\pi^* (\phi_{[\psi]}) = (\pi^* \phi)_{[\tilde{\psi}]}$.   Then applying the previous part of the proof gives the result we require.
\end{proof}

We now turn to proving a lower bound for the partial Bergman function.    Recall that $\phi_F$ is a fixed smooth metric on $F$ and we have set $\psi' = \phi - \phi_F + \psi$.

\begin{proposition}\label{prop:modify}
Let $\omega$ be a K\"ahler form on $X$.    There exists a $k'$ and $C'>0$ such that for any $k\ge k'$, any $x\in X - \mathbb B_+(L-F)\cup \Sing(\psi)$ and any $C>0$ there exists a metric $\sigma_k=\sigma_{k,C}$ on $L^k$ such that
\begin{enumerate}
\item $\sigma_{k,C} \le k P_{\psi'+C}\phi$ on $X$
\item There is an neighbourhood $U$ of $x$ and constant $C'_U$ (both independent of $k$ and $C$) such that 
\[kP_{\psi'+C} \phi  \le \sigma_k+C_U'\text{ on }U.\]
\item $\sigma_{k,C}$ has the extension property with constant $C'$ (as defined in Section \ref{sec:ohsawa}).
\item $dd^c\sigma_{k,C} \ge C'^{-1}\omega$ for all $k$ and $C$.
\end{enumerate}
\end{proposition}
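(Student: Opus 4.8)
The plan is to build $\sigma_{k,C}$ out of the envelope itself together with a \emph{bounded} twist that injects a uniform amount of positivity. Write $\tau_C:=P_{\psi'+C}\phi$. Recall from the construction around \eqref{eq:boundonphi+} that, since $x\notin\mathbb B_+(L-F)$, there is a Kodaira decomposition $k_0(L-F)=A+E$ with $A$ ample, $E$ effective, $x\notin E$, and a metric
\[ \phi_+ = k_0^{-1}\bigl(\phi_A + \ln|s_E|^2\bigr) \]
on $L-F$ with $dd^c\phi_+\ge c_0\omega$ for a fixed $c_0>0$ (discarding the positive current $[E]$), which is smooth on a neighbourhood $U$ of $x$ and has been normalised so that $\phi_+ + \psi\le\tau_C$ on $X$ for all $C\ge 0$. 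Since $\mathbb B_+(L-F)$ is the base locus of a single complete linear system, it is the intersection of finitely many effective divisors of this type, so one may choose the decomposition from a finite list and all constants introduced below are independent of $x$. Now fix a large integer $m$ (its size is pinned down in the last step, and we shrink $c_0$ so that also $dd^c\phi_A\ge c_0\omega$) and set
\[ \sigma_{k,C} := (k-m)\tau_C + m\,(\phi_+ + \psi),\qquad k\ge m, \]
a positive singular metric on $L^k$.

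Properties (1), (2) and (4) are then essentially formal. For (1), the bound $\phi_+ + \psi\le\tau_C$ gives $\sigma_{k,C}\le (k-m)\tau_C + m\tau_C = k\tau_C$. For (4), using $dd^c\tau_C\ge 0$ and $dd^c\psi\ge 0$, $dd^c\sigma_{k,C}=(k-m)\,dd^c\tau_C + m\,dd^c(\phi_+ + \psi)\ge m\,dd^c\phi_+\ge mc_0\,\omega$, uniformly in $k$ and $C$. For (2), on $U$ the metric $\phi_+ + \psi$ is smooth, hence continuous on $\overline U$, while $\tau_C\le\phi$ everywhere, so
\[ k\tau_C - \sigma_{k,C} = m\,(\tau_C - \phi_+ - \psi) \le m\,\sup_{\overline U}(\phi - \phi_+ - \psi) =: C'_U, \]
and the right-hand side depends only on $U$, $m$, $\phi$, $\phi_+$ and $\psi$, not on $k$ or $C$.

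The one step demanding care is (3), and it is here that $m$ must be taken large. Apply the Ohsawa--Takegoshi theorem (Theorem \ref{thm:ohsawa}) with the value $c:=c_0$, producing a threshold $k_1$ and a constant $C'$ depending only on $c_0$ (and $X,\omega,\dots$). To cast $(L^k,\sigma_{k,C})$ in the required shape, peel off $k_1$ copies of the ample part: write $L^k = A^{\otimes k_1}\otimes F'$ with $F':=L^k\otimes A^{-k_1}$, equip $A$ with the smooth metric $\phi_A$ (so $dd^c\phi_A\ge c_0\omega$) and $F'$ with $\phi_{F'}:=\sigma_{k,C}-k_1\phi_A$. Then $k_1\phi_A+\phi_{F'}=\sigma_{k,C}$; and since $\phi_A$ is smooth its curvature is bounded above, say $dd^c\phi_A\le C_A\omega$, so
\[ dd^c\phi_{F'} = dd^c\sigma_{k,C} - k_1\,dd^c\phi_A \ge \bigl(mc_0 - k_1C_A\bigr)\omega \ge -c_0\,\omega \]
as soon as $m\ge (k_1C_A + c_0)/c_0$. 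Since $k_1$, $c_0$, $C_A$ are fixed constants (ranging over a finite set as the decomposition varies with $x$), such an $m$ exists. Theorem \ref{thm:ohsawa} then yields that $\sigma_{k,C}=k_1\phi_A+\phi_{F'}$ has the extension property with the constant $C'$, which is (3); enlarging $C'$ if necessary so that also $(C')^{-1}\le mc_0$ absorbs (4) into the same constant.

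The crux, and the only real obstacle, is the apparent conflict between (2) and (3)--(4): property (2) pins $\sigma_{k,C}$ to within $O(1)$ of $k\tau_C$ near $x$, where $\tau_C$ carries no strict positivity, whereas (3)--(4) require a fixed amount of positivity over all of $X$ at once. Taking a \emph{fixed} number $m$ of copies of the K\"ahler-current metric $\phi_+ + \psi$ (rather than a fixed fraction, which would cost $\sim k$ near $x$) resolves this: it changes $\sigma_{k,C}$ by only $O(1)$ near $x$ yet supplies curvature $\ge mc_0\omega$ globally, and allowing $m$ to grow further creates enough room to extract the $k_1$ copies of the ample bundle needed to match the precise hypotheses of Theorem \ref{thm:ohsawa}.
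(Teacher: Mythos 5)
Your construction is correct and essentially reproduces the paper's proof: the paper likewise takes $\sigma_{k,C}=(k-k')P_{\psi'+C}\phi + k'(\phi_+ +\psi)$ with $\phi_+ + \psi \le P_{\psi'+C}\phi$, and verifies (1), (2) and (4) exactly as you do. The only cosmetic differences are that the paper uses a single global metric $\phi_+$ on $L-F$ with strictly positive curvature, smooth away from $\mathbb B_+(L-F)$ (so no finite list of Kodaira decompositions is needed for uniformity in $x$), and applies Theorem \ref{thm:ohsawa} directly with $L'=L-F$, $\phi_{L'}=\phi_+$ and $\phi_{F'}=(k-k')P_{\psi'+C}\phi+k'\psi$, which avoids peeling off copies of the ample bundle $A$ and the need to enlarge $m$ beyond the Ohsawa--Takegoshi threshold.
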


\begin{proof}
We may as well assume $L-F$ is big otherwise the statement is trivial.  Thus $L-F$ admits a metric $\phi_+$ with strictly positive curvature that is smooth away from $\mathbb B_+(L-F)$.   By subtracting a constant from $\phi_+$ we may assume that
\begin{equation}
\phi_+ \le \min\{\phi-\psi,\phi-\phi_F\}\label{eq:metricbound}
\end{equation}
which yields $\phi_+ + \psi \le P_{\psi'} \phi \le P_{\psi'+C}\phi$ for all $C>0$.

Thus there is a $c>0$ with $dd^c \phi_+\ge c\omega$, so we can take $k'$ and $C'$ as in the statement of the Ohsawa-Takegoshi Theorem \eqref{thm:ohsawa}.  Now let $k\ge k'$ and $x\in X -\mathbb B_+(L-F)\cup \Sing(\psi)$.  Set 
\[\phi_{F'} = (k-k') P_{\psi'+C} \phi + k'\psi\]
which is a metric on $F' = L^{k-k'} \otimes F^{k'}$ and let
\[\sigma_k = k'\phi_+ + \phi_{F'}.\]
Observe that $dd^c \phi_+\ge c\omega$ and $dd^c\phi_{F'}\ge 0$, so the extension property (3) holds and also $dd^c\sigma_{k,c}\ge k'dd^c\phi_+$ so (4) follows immediately.  Moreover 
\[\sigma_k = k'(\phi_+ + \psi) + (k-k')P_{\psi'+C} \phi \le kP_{\psi'+C}\phi\]
which gives (1).   Finally
\begin{eqnarray*}
  kP_{\psi'+C} - \sigma_k& =& k'(P_{\psi'+C}-\psi-\phi_+)\\
&\le & k'(\phi -\psi - \phi_+)
\end{eqnarray*}
Hence if $U$ is a small ball around $x$ in $\mathbb B_+(L-F)\cup \Sing(\psi)$ then $\phi-\psi-\phi_+$ is bounded on $U$ by some constant $C'_U$, so
$kP_{\psi'+C} - \sigma_k \le C'_U$ on $U$ for all $C>0$.
\end{proof}

\begin{theorem}\label{thm:bergmanconverge}
We have
  \[k^{-1} \ln B_k \to \phi_{[\psi]}-\phi\]
  uniformly on compact subsets of $X-\mathbb B_+(L-F)\cup \Sing(\psi)$ as $k$ tends to infinity.    That is, given a compact subset $K\subset X-\mathbb B_+(L-F)\cup \Sing(\psi)$ there is a $C_K>0$ such that
\[ C_K^{-1} e^{k(\phi- \phi_{[\psi]})} \le B_k(\phi,\psi) \le C_K k^n e^{k(\phi- \phi_{[\psi]})}\]
over $K$ for all $k$.
\end{theorem}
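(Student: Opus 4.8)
The plan is to establish the two-sided bound
\[
C_K^{-1} e^{k(\phi-\phi_{[\psi]})} \le B_k(\phi,\psi) \le C_K k^n e^{k(\phi-\phi_{[\psi]})}
\]
on a compact $K \subset X - \mathbb B_+(L-F)\cup \Sing(\psi)$, from which the uniform convergence $k^{-1}\ln B_k \to \phi_{[\psi]}-\phi$ follows upon taking logarithms and dividing by $k$ (the $k^n$ factor contributes $k^{-1}\ln k^n = n k^{-1}\ln k \to 0$). The upper bound is already essentially in hand: when $\psi$ has tame singularities (which it does, being exponentially H\"older continuous, by Lemma \ref{prop:exponentialimpliestame}) Proposition \ref{proposition:bergmanboundtame} gives $B_k(\phi,\psi) \le Ck^n e^{-c\psi}e^{k(\phi_{[\psi]}-\phi)}$, and since $K$ avoids $\Sing(\psi)$ the factor $e^{-c\psi}$ is bounded above on $K$, absorbing into $C_K$. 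So the real content is the lower bound.

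For the lower bound I would use Proposition \ref{prop:modify} together with the Ohsawa-Takegoshi extension property. Fix $x \in K$ and let $C \to \infty$; apply Proposition \ref{prop:modify} to get metrics $\sigma_{k,C}$ on $L^k$ with $\sigma_{k,C} \le kP_{\psi'+C}\phi$ globally, $kP_{\psi'+C}\phi \le \sigma_{k,C} + C_U'$ on a neighbourhood $U$ of $x$ (with $C_U'$ independent of $k$, $C$), curvature bounded below by a fixed Kähler form, and the extension property with a uniform constant $C'$. The extension property means: for the point $x$ and any $\zeta \in (L^k)_x$ there is $s \in H^0(X, L^k)$ with $s(x) = \zeta$ and $\|s\|_{\sigma_{k,C}}^2 \le C' |s(x)|_{\sigma_{k,C}}^2$. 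The key point is that $\sigma_{k,C}$, having analytic singularities controlled by $kP_{\psi'+C}\phi$, produces sections $s$ lying in $\mathcal I(k\psi)\otimes L^k$ (one checks $k^{-1}\ln|s|^2 \le P_{\psi'+C}\phi \le \psi' + O(1)$, so $s$ is a valid candidate and in particular integrable against $e^{-k\psi}$ for the multiplier ideal membership — here $k \in J(\psi)$ or the tameness argument handles rationality). Comparing the $\sigma_{k,C}$-norm to the $\phi$-norm on $U$ via the bound $kP_{\psi'+C}\phi \le \sigma_{k,C}+C_U'$ and globally via $\sigma_{k,C}\le kP_{\psi'+C}\phi \le k\phi$, one gets that a suitably normalised such $s$ has $\|s\|_\phi^2 \le C' e^{C_U'}$ while $|s(x)|_\phi^2 \gtrsim e^{k P_{\psi'+C}\phi(x) - k\phi(x)}$ up to the uniform constants. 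Feeding this into the extremal characterisation $B_k(\phi,\psi) = \sup\{|s(x)|_\phi^2 : \|s\|_\phi = 1, s\in H^0(\mathcal I(k\psi)L^k)\}$ yields $B_k(\phi,\psi) \ge C_K^{-1} e^{k(P_{\psi'+C}\phi(x) - \phi(x))}$; letting $C \to \infty$ and then taking the upper semicontinuous regularisation replaces $P_{\psi'+C}\phi$ by $\phi_{[\psi]}$, giving the claimed lower bound pointwise on $K$.

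The main obstacle is getting all the constants genuinely uniform: uniform in $k$ (handled by the $k \ge k'$, $C'$ from Ohsawa-Takegoshi), uniform in $C$ (the whole point of stating Proposition \ref{prop:modify} with $C$-independent constants), and — crucially — turning a pointwise lower bound into one that holds uniformly over the compact set $K$. For the last point one covers $K$ by finitely many of the neighbourhoods $U$ from Proposition \ref{prop:modify} and takes the worst constant; the neighbourhoods and constants $C_U'$ depend on $x$ but finitely many suffice by compactness. A secondary technical point is the exchange of the limit $C\to\infty$ with the upper semicontinuous regularisation and with the supremum defining $B_k$; since $P_{\psi'+C}\phi$ increases to $P_{[\psi]}\phi$ whose regularisation is $\phi_{[\psi]}$, and the lower bound is a clean inequality at each $C$, this passage is routine. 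One should also double-check the membership in $\mathcal I(k\psi)$ carefully when $\psi$ only has tame rather than algebraic singularities — but since $k^{-1}\ln|s|^2 \le (1 - c/k)\psi + O(1) \le \psi + O(1)$ one has $|s|^2 e^{-k\psi}$ locally integrable, which is exactly the multiplier ideal condition.
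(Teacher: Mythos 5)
Your proposal is correct and follows essentially the paper's own proof: the upper bound comes from Proposition \ref{proposition:bergmanboundtame} (tameness via Lemma \ref{prop:exponentialimpliestame}) with $e^{-c\psi}$ bounded on $K$, and the lower bound from the extension property of the metrics $\sigma_{k,C}$ of Proposition \ref{prop:modify}, normalising a section at $x$, comparing the $\sigma_{k,C}$-, $kP_{\psi'+C}\phi$- and $k\phi$-norms, and letting $C\to\infty$ before taking the upper semicontinuous regularisation against the extremal characterisation of $B_k(\phi,\psi)$. The only superfluous point is your invocation of $k\in J(\psi)$ or tameness for membership in $\mathcal I(k\psi)$: as you also note, finiteness of $\|s\|_{k(\psi'+C)}$ (since $P_{\psi'+C}\phi\le\psi'+C$) already gives $s\in H^0(\mathcal I(k\psi)\otimes L^k)$, which is exactly how the paper argues.
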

\begin{proof}
  The upper bound for $B_k$ comes from Proposition \ref{proposition:bergmanboundtame} since $\psi$ is bounded on any compact $K$ outside of $\Sing(\psi)$.      For the other direction let $k'$ and $C'$ be as in the statement of \ref{prop:modify} and $x\in X-\mathbb B_+(L-F)\cup \Sing(\psi)$.  To ease notation let $\tau_C = P_{\psi'+C}\phi$.  Pick a metric $\sigma_k$ as in the previous proposition.  So by the extension property of $\sigma_k$ there exists an $s\in H^0(L^k)$ such that $\|s\|_{\sigma_k}\le C'$ and $|s(x)|_{\sigma_k} = 1$.   But this implies that $\|s\|_{k\tau_C} \le C'$ and $|s(x)|_{k\tau_C}\ge (C'_U)^{-1}$.  In particular $\|s\|_{k(\psi'+C)}$ is finite, and so $s\in H^0(\mathcal I(k\psi) L)$.  Thus by the extremal property of the Bergman function we deduce that
\[ B_k(\phi,\psi)\ge C' e^{k(\tau_C - \phi)}\]
on some neighbourhood $U$ of $x$, for some constant $C'$ that is independent of $C$ and $U$.  Letting $C$ tend to infinity and then taking the upper-semicontinuous regularisation (using that the Bergman function is continuous) yields
\[ B_k(\phi,\psi) \ge C' e^{k(\phi_{[\psi]} - \phi)}\]
on $U$ as required.
\end{proof}

\begin{lemma}\label{lemma:lowerbergman}
We have
\[ \liminf_{k\to \infty} k^{-n}B_k(\phi,\psi)dV\ge MA(\phi)\]
almost everywhere on $D(\phi,\psi)\cap X(0).$
\end{lemma}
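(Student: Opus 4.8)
The plan is to adapt Berman's argument for the lower bound on the Bergman function in the non-singular case (\cite[Sec 4]{Berman}), localising at a generic point of $D(\phi,\psi)\cap X(0)$ and using the Ohsawa-Takegoshi machinery already assembled in Proposition \ref{prop:modify}. First I would fix a point $x\in D(\phi,\psi)\cap X(0)$ away from the (pluripolar, hence measure-zero) bad set $\mathbb B_+(L-F)\cup\Sing(\psi)$ at which $\phi$ has a good second-order Taylor expansion (this holds almost everywhere since $\phi$ is smooth, and in fact one only needs $\phi$ to be twice differentiable at $x$ in the Lebesgue sense). Since $x\in D$ we have $\phi_{[\psi]}(x)=\phi(x)$, and by the upper bound of Theorem \ref{thm:bergmanconverge} together with continuity of $\phi_{[\psi]}$ the envelope agrees with $\phi$ to high order near $x$; more precisely, I would record the inequality $\phi_{[\psi]}\ge \tau_C$ and that on a small ball $\phi - \tau_C$ is controlled, so that up to $o(|z-x|^2)$ errors one can treat $\phi_{[\psi]}\approx\phi$ on a shrinking polydisc of radius $\sim k^{-1/2}\log k$.

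Next I would construct, for each large $k$, a holomorphic section $s_k\in H^0(\mathcal I(k\psi)L^k)$ that concentrates at $x$. The natural candidate is to use the extension metric $\sigma_k=\sigma_{k,C}$ from Proposition \ref{prop:modify}: its extension property with a uniform constant $C'$ produces $s_k\in H^0(L^k)$ with $|s_k(x)|_{\sigma_k}=1$ and $\|s_k\|_{\sigma_k}\le C'$, and since $\sigma_k\le k\tau_C\le k\phi_{[\psi]}$ this section lies in the multiplier ideal and has $\|s_k\|_{k\phi,dV}$ finite. The point of this lemma, in contrast to Theorem \ref{thm:bergmanconverge}, is that we want the sharp constant, not just an exponential rate — so instead of merely invoking the extension property I would pass to a \emph{local} model, choosing coordinates in which $dd^c\phi_x$ is the standard form, and build $s_k$ more carefully as a local holomorphic function (essentially a Gaussian peak $e^{k Q(z)}$ adapted to the Hessian of $\phi$ at $x$) multiplied by the defining sections of $E$ to the appropriate order, then correct it to a global section using Hörmander/Ohsawa-Takegoshi $L^2$-estimates with the positive weight $k\phi_+$ absorbing the error. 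The weight $e^{-k\phi}$ integrated against $|e^{kQ}|^2$ over the polydisc $\Delta_k$ of radius $k^{-1/2}\log k$ contributes, after rescaling $z\mapsto z/\sqrt k$, a factor $\sim k^{-n}(\det dd^c\phi_x)^{-1}$, i.e. $k^{-n}MA(\phi)^{-1}$ in the obvious normalisation; since $\phi=\phi_{[\psi]}$ at $x$ this is precisely the constant that appears in the claimed bound.

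Then I would assemble the estimate: normalising $s_k$ to have $\|s_k\|_{\phi,dV}=1$ and using $B_k(\phi,\psi)\ge |s_k(x)|_\phi^2$ (the extremal characterisation recalled in the proof of Proposition \ref{proposition:bergmanboundtame}), the computation of the previous paragraph gives $|s_k(x)|_\phi^2\ge (1-o(1))k^n MA(\phi)/dV$ at $x$, which is exactly $\liminf_k k^{-n}B_k(\phi,\psi)dV\ge MA(\phi)$ at $x$. Since this holds for almost every $x\in D(\phi,\psi)\cap X(0)$, the lemma follows. I would remark that the subtlety forcing us to localise is that we only control $\phi_{[\psi]}=\phi$ and the second-order behaviour of $\phi$ on a set of full measure, not everywhere, and that the role of $x\notin\mathbb B_+(L-F)$ is precisely to have the metric $\phi_+$ (hence $\sigma_k$) smooth and strictly positive near $x$ so that the $L^2$-correction step and the extension property apply with constants independent of $k$.

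The main obstacle I anticipate is making the local-to-global step quantitatively sharp: the Hörmander correction $u_k$ that turns the local Gaussian peak into a genuine global holomorphic section must be shown to satisfy $u_k(x)=0$ (automatic, since its $\bar\partial$-data vanishes near $x$ once we multiply by a cutoff that is $1$ near $x$) and, crucially, $\|u_k\|^2$ must be $o(k^{-n})$ relative to the main term, so that it does not degrade the constant — this requires the strict positivity $dd^c(k\phi_++\sigma_k)\ge k c\,\omega$ to beat the $O(1)$ error from the cutoff, which is where $x\notin\mathbb B_+(L-F)$ is used in an essential way. A secondary technical point is handling the multiplier ideal along $\Sing(\psi)$: one must check that multiplying the local peak by $s_E^{?}$ and the generators of $\mathcal I(k\psi)$ to order $\lfloor k\alpha_j\rfloor$ does not change the leading asymptotics, which follows because $x$ is a positive distance from $\Sing(\psi)$ so those factors are bounded above and below on $\Delta_k$ for $k$ large — the same mechanism already used in Proposition \ref{proposition:bergmanboundalgebraic}.
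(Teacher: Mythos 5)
Your skeleton is the paper's (and Berman's): localise at an almost-every point of $D(\phi,\psi)\cap X(0)$ outside $\mathbb B_+(L-F)\cup\Sing(\psi)$, build a cut-off Gaussian peak adapted to the Hessian, correct it by an $L^2$-estimate using the metrics of Proposition \ref{prop:modify}, and invoke the extremal characterisation of $B_k(\phi,\psi)$. But two steps, as you describe them, would fail. First, the $\bar\partial$-error of the peak must be exponentially small in the \emph{singular} weight: the Hörmander weight is comparable on $U$ to $kP_{\psi'+C}\phi$, so what you need is $\|\bar\partial f_k\|_{kP_{\psi'+C}}$ small uniformly in $C$, i.e.\ in the limit $\|\bar\partial f_k\|_{k\phi_{[\psi]}}\le Ce^{-k/C}$. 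Since $\phi_{[\psi]}\le\phi$, smallness of $|f_k|^2e^{-k\phi}$ on the cut-off annulus gives nothing, and your justification that ``$\phi_{[\psi]}$ agrees with $\phi$ to high order near $x$'' by continuity plus the upper bound of Theorem \ref{thm:bergmanconverge} is not valid: continuity never yields second-order contact. The correct input is the $C^{1,1}$ regularity of Theorem \ref{thm:regularity}, which gives twice differentiability of $\phi_{[\psi]}$ almost everywhere, together with the fact that $\phi-\phi_{[\psi]}\ge 0$ attains its minimum on $D$ and that $\partial^2(\phi-\phi_{[\psi]})/\partial z\partial\bar z=0$ almost everywhere on $D$; only with this does the peak adapted to $dd^c\phi_x$ also decay against $e^{-k\phi_{[\psi]}}$ on the annulus. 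This is precisely what the peaked sections of \cite[Lemma 4.4]{Berman}, which the paper quotes, encode.

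Second, the multiplier-ideal membership and the uniformity of constants are not arranged the way you propose. Multiplying the local peak by ``generators of $\mathcal I(k\psi)$ to order $\lfloor k\alpha_j\rfloor$'' is both unavailable (here $\psi$ is only exponentially H\"older continuous, so there are no $\alpha_j$'s or log resolution to appeal to) and irrelevant (those generators are units near $x$; the danger is the global correction term, not the peak). Membership of $s_k=f_k-h_k$ in $H^0(\mathcal I(k\psi)L^k)$ has to be forced by the weight used in the $\bar\partial$-estimate: the paper solves $\bar\partial$ with the weight $\sigma_{k,C}=k'(\phi_++\psi)+(k-k')P_{\psi'+C}\phi$, whose finiteness condition implies membership because $P_{\psi'+C}\phi\le\psi'+C$. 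Running the estimate ``with the positive weight $k\phi_+$'' fails quantitatively: on $U$ the gap between $k\phi_+$ (or $k(\phi_++\psi)$) and $kP_{\psi'+C}\phi$ is of size $O(k)$, so transferring the exponentially small $\bar\partial$-bound costs $e^{O(k)}$. The whole point of $\sigma_{k,C}$ is that only a fixed number $k'$ of the ``bad'' factors $\phi_++\psi$ appear, giving the two-sided comparison with $kP_{\psi'+C}\phi$ with constants independent of $k$ and $C$, while only a uniform curvature bound $dd^c\sigma_{k,C}\ge C'^{-1}\omega$ is available or needed (not $\ge kc\,\omega$ as you assert). Finally, because $\phi_{[\psi]}$ is not known to have the singularity type of $\psi$, one cannot use it directly as a weight; the paper keeps $C$ finite throughout and passes to $C\to\infty$ by dominated convergence, choosing $C=C(k)$, to convert $\|\bar\partial f_k\|_{kP_{\psi'+C}}$ into $\|\bar\partial f_k\|_{k\phi_{[\psi]}}$ --- this limiting step is missing from your sketch and is what makes all the constants close up.
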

\begin{proof}

It is sufficient to prove the existence of a sequence $s_k\in H^0(L^k\otimes \mathcal I(k\psi))$ such that
  \begin{eqnarray}\label{eq:lowerbergman0}
    \lim_{k\to \infty} \frac{|s_k(x)|_{\phi}^2}{k^n\|s_k\|^2_{\phi}} dV &=& MA(\phi)_x
  \end{eqnarray}

Since $x$ is general we may assume $x\notin \mathbb B_+(L-F)\cup \Sing(\psi)$.  So from Lemma \ref{prop:modify} there is an open set $U$ around $x$ and metrics $\sigma_{k,C}\in \PSH(L^k)$ for $k\ge 0$ and $C>0$ such that 
\begin{eqnarray}\label{eq:lowerbergman1}
  \sigma_{k,C}\le kP_{\psi'+C} \text{ on }X,\\
  kP_{\psi'+C} \le C_1 + \sigma_{k+C}\text{ on }U\text{ and},\nonumber\\
dd^c\sigma_{k,C} \ge C_1^{-1}\omega,\nonumber
\end{eqnarray}
where the constant $C_1$ is independent of $k$ and $C$ and $\omega$ is some chosen smooth K\"ahler form.

Next we consider smooth sections of $L^k$ that are ``peaked'' and supported in $U$.  Using cut-off functions, and that $\phi_{[\psi]}$ is $C^{1,1}$, one can produce,  for sufficiently general points $x$ in $D(\phi,\psi)\cap X(0)$, a sequence of smooth sections $f_k$ of $L^k$ that are supported in $U$ and satisfy
  \begin{eqnarray*}
    \lim_{k\to \infty} \frac{|f_k(x)|_{\phi}^2}{k^n\|f_k\|^2_{\phi}} dV &=& MA(\phi)_x\\
\|\bar{\partial} f_k\|_{k\phi_{[\psi]}} &\le& C_2e^{-k/C_2}
  \end{eqnarray*}
for some constant $C_1$ (see \cite[Lemma 4.4]{Berman}).  

To perturb these to holomorphic sections we apply the H\"ormander estimate with the metric $\sigma_{k,C}$ to obtain, for $k$ sufficiently large,  smooth sections $g_{k,C}$ of $L^k$ with $\bar{\partial} g_{k,C} = \bar{\partial} f_k$ and $\|g_{k,C}\|_{\sigma_k,C} \le C_3 \|\bar{\partial} f_k\|_{\sigma_{k,C}}$ (observe $C_3$ can be taken independent of $C$ and $k$ from the lower bound for $dd^c\sigma_{k,C}$ from \eqref{eq:lowerbergman1}).    Now the first two statements in \eqref{eq:lowerbergman1}, and the fact that $f_k$ is supported on $U$ imply
\[ \|g_{k,C}\|_{k\phi}\le \|g_{k,C}\|_{kP_{\psi'+C}\phi} \le \|g_{k,C}\|_{\sigma_{k,C}} \le C_3 \|\bar{\partial} f_k\|_{\sigma_{k,C}} \le C_4 \|\bar{\partial} f_k\|_{kP_{\psi'+C}},\]
where $C_4$ is also independent of $C$ and $k$.  

Temporarily fixing $k$, and recalling that $P_{\psi'+C}$ tends to $\phi_{[\psi]}$ pointwise almost everywhere as $C$ tends to infinity, an application of the dominated convergence theorem yields
\[\lim_{C\to \infty} \|g_{k,C}\|_{k\phi}\le C_4 \|\bar{\partial} f_k\|_{k\phi_{[\psi]}}\le C_5 e^{-k/C_2}.\]
In particular choosing $C= C(k)$ sufficiently large we deduce that for each $k$ there is an $h_k = g_{k,C(k)}$ such that $\bar{\partial} h_k= \bar{\partial} f_k$ and
\begin{equation}
  \label{eq:lowerbergman2}
 \|h_k\|_{\phi}\le 2C_5 e^{-k/C_2}.  
\end{equation}

Thus $s_{k}:= f_k-h_k$ is holomorphic and since $\|s_{k}\|_{kP_{\psi'+C}}$ is finite and $P_{\psi'+C}\le \psi +C$ we in fact have $s_k\in H^0(L^k\otimes \mathcal I(k\psi')) = H^0(L^k\otimes \mathcal I(k\psi))$.    Moreover $\|s_k\|^2_{\phi}\le \|f_k\|^2_{\phi} + O(k^{-\infty})$ and, since $h_k$ is holomorphic near $x$, the local holomorphic Morse inequality implies $|s_k(x)|_{\phi}^2= |f_k(x)|^2_{\phi} + O(k^{-\infty})$ as well.  Thus \eqref{eq:lowerbergman0} holds for this sequence of sections $s_k$, completing the proof of the Lemma.
\end{proof}

\begin{theorem}\label{thm:bergmanmeasurelimit}
Suppose $L-F$ is big.  Then there is a pointwise limit 
\[ \lim_{k\to \infty} k^{-n} B_k(\phi,\psi) dV = \mathbf 1_{D(\phi,\psi)\cap X(0)} MA(\phi)\]
almost everywhere on $X(0)$.  Moreover \[\lim_{k\to \infty} k^{-n}B_k(\phi,\psi)dV \to \mu(\phi,\psi)\] weakly in the sense of measures.
\end{theorem}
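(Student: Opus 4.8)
The plan is to combine the upper bound already established in Proposition~\ref{proposition:bergmanboundtame} with the lower bound of Lemma~\ref{lemma:lowerbergman} to pin down the pointwise limit, and then upgrade the pointwise statement to weak convergence of measures by a standard domination argument. First I would record the easy inclusion: away from $D(\phi,\psi)\cup\Sing(\psi)$ the upper bound $B_k(\phi,\psi)\le Ck^n e^{-c\psi}e^{k(\phi_{[\psi]}-\phi)}$ forces $k^{-n}B_k(\phi,\psi)\to 0$ (since $\phi_{[\psi]}-\phi<0$ there and $\psi$ is locally bounded off its singular locus), while on $D(\phi,\psi)\cap X(0)$ Lemma~\ref{lemma:lowerbergman} gives $\liminf_k k^{-n}B_k(\phi,\psi)\,dV\ge MA(\phi)$ almost everywhere. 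For the matching $\limsup$ on $D(\phi,\psi)\cap X(0)$ I would use the local holomorphic Morse inequality $B_k(\phi)\,dV\le C_k\mathbf 1_{X(0)}MA(\phi)$ with $C_k\to 1$, together with $B_k(\phi,\psi)\le B_k(\phi)$, to get $\limsup_k k^{-n}B_k(\phi,\psi)\,dV\le MA(\phi)$ on $X(0)$. Combining these three facts yields the claimed pointwise identity $\lim_k k^{-n}B_k(\phi,\psi)\,dV=\mathbf 1_{D(\phi,\psi)\cap X(0)}MA(\phi)$ almost everywhere on $X(0)$; since $MA(\phi)$ and $MA(\phi_{[\psi]})$ put no mass on the pluripolar set $\Sing(\psi)$, and since by Theorem~\ref{thm:extremalmeasures} the right-hand side equals $\mu(\phi,\psi)$, the pointwise limit away from $X(0)$ contributes nothing.

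For the weak convergence statement, the point is to pass from almost-everywhere convergence of densities to weak convergence of the measures $k^{-n}B_k(\phi,\psi)\,dV$. The natural tool is a dominated convergence argument: by the global Morse bound $k^{-n}B_k(\phi)\,dV\le C_k\mathbf 1_{X(0)}MA(\phi)$ with $C_k$ bounded, the densities $k^{-n}B_k(\phi,\psi)$ are dominated (uniformly in $k$, up to a fixed constant) by the fixed finite measure $\mathbf 1_{X(0)}MA(\phi)$. Hence for any continuous test function $\chi$ on $X$, $\int_X \chi\, k^{-n}B_k(\phi,\psi)\,dV$ converges by dominated convergence to $\int_X \chi\,\mathbf 1_{D(\phi,\psi)\cap X(0)}MA(\phi)=\int_X\chi\,d\mu(\phi,\psi)$, using the pointwise limit just established together with the fact that $MA(\phi)$ gives zero mass to the pluripolar set $\Sing(\psi)$ and to the negligible set where the pointwise limit fails.

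The main obstacle, to my mind, is controlling what happens on $X\setminus X(0)$, i.e.\ where $dd^c\phi$ fails to be strictly positive: there the Morse-type domination is not immediately available, and one must argue that the densities $k^{-n}B_k(\phi,\psi)$ still do not escape to infinity in a way that contributes mass in the weak limit. One way around this is to note that on $\mathbb B_+(L-F)\cup\Sing(\psi)$ the relevant set has measure zero for $MA(\phi)$, while outside it the upper bound of Proposition~\ref{proposition:bergmanboundtame} together with $\phi_{[\psi]}\le\phi$ gives the required domination; alternatively one works with $B_k(\phi+\epsilon\omega\text{-potential})$ and lets $\epsilon\to 0$, or simply invokes the Bedford--Taylor continuity of the Monge--Amp\`ere operator along the decreasing sequence $P_{\psi'+C}\phi\downarrow$ combined with the fact that $\int_X k^{-n}B_k(\phi,\psi)\,dV$ has total mass bounded by $\operatorname{vol}(L)$ and cannot concentrate on a set of $MA(\phi)$-measure zero. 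I expect the cleanest route is to reduce everything to a neighbourhood of a generic point of $X(0)$, where all the estimates above are in force, and then invoke that the complement is negligible for the limiting measure $\mu(\phi,\psi)$.
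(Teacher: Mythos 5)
Your proposal is correct and follows essentially the same route as the paper: vanishing off $D(\phi,\psi)$ via Proposition~\ref{proposition:bergmanboundtame}, the matching bounds on $D(\phi,\psi)\cap X(0)$ from the local holomorphic Morse inequalities and Lemma~\ref{lemma:lowerbergman}, and then dominated convergence for the weak statement. The only difference is that the difficulty you raise about $X\setminus X(0)$ is dispatched in the paper simply by the uniform bound $k^{-n}B_k(\phi,\psi)\le k^{-n}B_k(\phi)\le C$ together with the Morse-inequality domination, which is the first of the alternatives you list.
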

\begin{proof}
  We have shown that $\lim_{k\to \infty} k^{-n} B_k(\phi,\psi)=0$ for $x\notin D(\phi,\psi)\cup\Sing(\psi)$, and thus almost everywhere outside $D(\phi,\psi)$ since $\Sing(\psi)$ is pluripolar and thus has measure zero.    For general $x\in D(\phi,\psi)$ the limit follows by combining the upper bound coming from the local holomorphic Morse inequalities, and the lower bound in the previous proposition.  The statement about the measures now follows from the Dominated Convergence Theorem, and the global upper bound $k^{-n}B_k(\phi,\psi)\le k^{-n}B(\phi)\le C$.
\end{proof}

In particular, by integrating the previous theorem over $X$ we see the volume of the equilibrium set $D(\phi,\psi)$ captures the rate of growth of the filtration of the space of sections determined by the multiplier ideal of $\psi$:
\begin{corollary}\label{cor:equilibriumvolume}
  \[\int_{D(\phi,\psi)} MA(\phi) = \lim_{k\to \infty} k^{-n} h^0(L^k \otimes \mathcal I(k\psi)).\]
\end{corollary}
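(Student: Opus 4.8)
The plan is simply to integrate Theorem \ref{thm:bergmanmeasurelimit} over $X$. The starting point is the elementary identity
\[ \int_X B_k(\phi,\psi)\, dV \;=\; \sum_{\alpha} \int_X |s_{\alpha}|_{\phi}^2\, dV \;=\; \sum_{\alpha} \|s_{\alpha}\|_{\phi,dV}^2 \;=\; h^0(L^k\otimes \mathcal I(k\psi)),\]
which holds because $\{s_{\alpha}\}$ is by definition an $L^2$-orthonormal basis of $H^0(\mathcal I(k\psi)L^k)$; in particular this integral is independent of the chosen basis. Dividing by $k^n$, the content of the Corollary is that $\int_X k^{-n}B_k(\phi,\psi)\, dV$ converges to $\int_{D(\phi,\psi)} MA(\phi)$.

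For this I would invoke the second assertion of Theorem \ref{thm:bergmanmeasurelimit}, namely that $k^{-n}B_k(\phi,\psi)\, dV \to \mu(\phi,\psi)$ weakly in the sense of measures. Since $X$ is compact, the constant function $1$ is an admissible test function, and testing the weak convergence against it yields
\[ \lim_{k\to \infty} k^{-n} h^0(L^k\otimes \mathcal I(k\psi)) \;=\; \lim_{k\to \infty}\int_X k^{-n}B_k(\phi,\psi)\, dV \;=\; \mu(\phi,\psi)(X),\]
so in particular the limit on the left exists. Alternatively one can bypass the measure-theoretic language and argue by dominated convergence: the uniform bound $k^{-n}B_k(\phi,\psi) \le k^{-n}B_k(\phi) \le C$ coming from the local holomorphic Morse inequalities provides an integrable majorant on the finite measure space $(X,dV)$, while the pointwise part of Theorem \ref{thm:bergmanmeasurelimit}, together with Proposition \ref{proposition:bergmanboundtame} and the negligibility of $\Sing(\psi)$, identifies the $dV$-a.e.\ limit of $k^{-n}B_k(\phi,\psi)$ with the density of $\mathbf 1_{D(\phi,\psi)\cap X(0)}MA(\phi)$.

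Finally I would identify the total mass using Theorem \ref{thm:extremalmeasures}, which gives $\mu(\phi,\psi) = \mathbf 1_{D(\phi,\psi)} MA(\phi)$, whence $\mu(\phi,\psi)(X) = \int_{D(\phi,\psi)} MA(\phi)$, as required. There is essentially no obstacle here, the entire content residing in Theorems \ref{thm:bergmanmeasurelimit} and \ref{thm:extremalmeasures}; the only point deserving a moment's attention is the passage from weak convergence of the measures $k^{-n}B_k(\phi,\psi)\, dV$ to convergence of their masses, which is legitimate precisely because $X$ is compact (equivalently, because of the uniform $L^\infty$ bound that licenses the dominated convergence argument).
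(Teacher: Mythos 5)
Your proposal is correct and is exactly the paper's argument: the corollary is obtained by integrating Theorem \ref{thm:bergmanmeasurelimit} over $X$, using the orthonormality identity $\int_X B_k(\phi,\psi)\,dV = h^0(L^k\otimes\mathcal I(k\psi))$ and identifying the total mass of $\mu(\phi,\psi)$ via Theorem \ref{thm:extremalmeasures}. The details you supply (testing weak convergence against the constant function, or dominated convergence with the Morse-inequality bound) are precisely the routine steps the paper leaves implicit.
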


\section{Maximal Envelopes on Products}\label{sec:products}

In this section we shall consider maximal envelopes on products.  Suppose  that $X_i$ for $i=1,2$ are smooth complex manifolds on which we have line bundles $L_i$ with smooth metrics $\phi_{i}$.  Our aim is to consider maximal envelopes on the product $X_1\times X_2$ with respect to the product metric \[\phi:=\pi_1^{*}\phi_{1} + \pi_2^{*}\phi_{2}\] 
where $\pi_i$ are the projection maps (for simplicity we shall suppress the $\pi_i$ in what follows where it cannot cause confusion).

Let $F_i$ be additional line bundles on $X_i$ with metrics $\psi_i\in \PSH(F_i)$.  For simplicity we assume that $L_i-F_i$ are ample, so $\mathbb B_+(L_i-F_i)$ is empty.  Furthermore set
\[\psi' = \sup\{\psi_1,\psi_2\}\]
so, recalling the abuse of notation described in \eqref{eq:abusesupremum}, $\psi'\in \PSH(F_1+F_2)$.

\begin{theorem}\label{thm:maximalenvelopeproduct}
Suppose $\psi_i$ have algebraic singularities.  Then
  \begin{eqnarray}\label{eq:envelopeproduct}
 \phi_{[\psi']} &=& \sup\{ (\phi_{1})_{[\lambda\psi_1]} + (\phi_{2})_{[(1-\lambda)\psi_2]} : \lambda \in (0,1)\}^*
\end{eqnarray}
\end{theorem}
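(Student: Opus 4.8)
The plan is to establish the two inequalities separately: ``$\ge$'' directly from the definition of the maximal envelope, and ``$\le$'' through the partial Bergman asymptotics of Theorem~\ref{thm:bergmanconverge} combined with Musta\c{t}\u{a}'s summation formula for multiplier ideals, whose interaction is the point of interest here.

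For ``$\ge$'' I would fix $\lambda\in(0,1)$ and set $\gamma_\lambda:=(\phi_1)_{[\lambda\psi_1]}+(\phi_2)_{[(1-\lambda)\psi_2]}$ (suppressing pullbacks). Being a sum of positive metrics, $\gamma_\lambda\in\PSH(L_1\otimes L_2)$, and clearly $\gamma_\lambda\le\phi_1+\phi_2=\phi$. Since $(\phi_1)_{[\lambda\psi_1]}\le\lambda\psi_1+O(1)$ and $(\phi_2)_{[(1-\lambda)\psi_2]}\le(1-\lambda)\psi_2+O(1)$, forming the convex combination and using that a convex combination is bounded by the maximum gives $\gamma_\lambda\le\sup\{\psi_1,\psi_2\}+O(1)=\psi'+O(1)$; thus $\gamma_\lambda$ is a competitor in the supremum defining $P_{[\psi']}\phi$, so $\gamma_\lambda\le\phi_{[\psi']}$. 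Taking the supremum over $\lambda$ and then the upper semicontinuous regularisation (noting $\phi_{[\psi']}$ is already upper semicontinuous) yields $\sup\{\gamma_\lambda:\lambda\in(0,1)\}^*\le\phi_{[\psi']}$.

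For ``$\le$'' it suffices, by the standard fact that a plurisubharmonic function equals its upper limit over the complement of any pluripolar set, to prove the pointwise bound $\phi_{[\psi']}(x_1,x_2)\le\sup_\lambda\gamma_\lambda(x_1,x_2)$ on the dense open set $\Omega=(X_1-\Sing(\psi_1))\times(X_2-\Sing(\psi_2))$, whose complement is pluripolar. Ampleness of $L_i-F_i$ gives $\mathbb B_+(L-F)=\emptyset$, so the lower bound in Theorem~\ref{thm:bergmanconverge} provides $\phi_{[\psi']}-\phi\le\liminf_k k^{-1}\ln B_k(\phi,\psi')$ on $\Omega$, and the problem reduces to an upper bound for $B_k(\phi,\psi')$ in terms of the factor Bergman functions $B_k(\phi_1,\lambda\psi_1)$, $B_k(\phi_2,(1-\lambda)\psi_2)$, computed with a product volume form. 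After passing to log resolutions we may assume the $\psi_i$ have simple normal crossing singularities; then Musta\c{t}\u{a}'s summation theorem for the sum of the two pulled-back ideals on $X_1\times X_2$, the factorisation of multiplier ideals of externally tensored ideals, and a Nadel-type vanishing for $k\gg0$ (valid since $L_i$ is big) together yield
\[ H^0\!\bigl(\mathcal I(k\psi')\otimes L^k\bigr)=\sum_{\lambda\in J_k}A_\lambda\otimes B_\lambda\ \subseteq\ H^0(L_1^k)\otimes H^0(L_2^k),\]
where $A_\lambda=H^0(\mathcal I(k\lambda\psi_1)\otimes L_1^k)$, $B_\lambda=H^0(\mathcal I(k(1-\lambda)\psi_2)\otimes L_2^k)$, and $J_k\subset[0,1]\cap\mathbb Q$ is finite of size growing at most polynomially in $k$; passing to $k$ in a suitable arithmetic progression (permissible since the limit in Theorem~\ref{thm:bergmanconverge} is independent of the subsequence) we may arrange that every $k$ is a potential jumping number for $\lambda\psi_1$ and for $(1-\lambda)\psi_2$ in the sense of Definition~\ref{def:potentialjumping}.

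The decisive point is that this sum of subspaces is secretly an orthogonal direct sum. With respect to the $L^2$ inner product attached to the product metric $\phi$, the $\{A_\lambda\}$ form a decreasing chain in $H^0(L_1^k)$ and the $\{B_\lambda\}$ an increasing chain in $H^0(L_2^k)$; writing $\bar A_\lambda$ for the orthogonal complement inside $A_\lambda$ of the next smaller member, the $\bar A_\lambda$ are mutually orthogonal and a short manipulation of the two chains gives $\sum_\lambda A_\lambda\otimes B_\lambda=\bigoplus_\lambda\bar A_\lambda\otimes B_\lambda$, an orthogonal decomposition. Consequently
\[ B_k(\phi,\psi')(x_1,x_2)=\sum_{\lambda\in J_k}B_{\bar A_\lambda}(\phi_1)(x_1)\,B_k(\phi_2,(1-\lambda)\psi_2)(x_2)\le|J_k|\max_{\lambda\in J_k}B_k(\phi_1,\lambda\psi_1)(x_1)\,B_k(\phi_2,(1-\lambda)\psi_2)(x_2),\]
using monotonicity of Bergman functions in the subspace and the fact that the Bergman function of an external tensor product is the product of the Bergman functions. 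Feeding in the uniform upper bounds $B_k(\phi_i,\cdot)\le C_ik^{\dim X_i}e^{k((\phi_i)_{[\cdot]}-\phi_i)}$ of Proposition~\ref{proposition:bergmanboundalgebraic} (whose constant depends only on $\phi_i$, which is precisely why the jumping-number normalisation of $J_k$ is used), then taking $k^{-1}\ln$ and letting $k\to\infty$ so that the polynomial factors $|J_k|$ and $k^{\dim X_i}$ disappear, produces $\phi_{[\psi']}(x_1,x_2)-\phi(x_1,x_2)\le\sup_{\lambda\in(0,1)}\bigl(\gamma_\lambda(x_1,x_2)-\phi(x_1,x_2)\bigr)$ on $\Omega$, as desired. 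The main obstacle, and where the real work lies, is the algebraic bookkeeping behind the displayed summation formula: arranging $J_k$ to have polynomially bounded size together with the integrality that makes Proposition~\ref{proposition:bergmanboundalgebraic} applicable with a uniform constant, and justifying the vanishing needed to pass from the sheaf-level summation formula to the corresponding identity for spaces of sections; the orthogonality trick itself is elementary once the set-up is in place.
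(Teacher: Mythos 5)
Your outline coincides with the paper's own proof in all of its main ingredients: the inequality ``$\ge$'' is obtained exactly as in the paper, and the reverse inequality again rests on the lower bound of Theorem~\ref{thm:bergmanconverge}, the K\"unneth identification $H^0(L^k)=H^0(L_1^k)\otimes H^0(L_2^k)$, Musta\c{t}\u{a}'s summation formula, and upper bounds for the factor partial Bergman functions. The packaging differs: you bound $B_k(\phi,\psi')$ globally via the orthogonal decomposition $\sum_\lambda A_\lambda\otimes B_\lambda=\bigoplus_\lambda\bar A_\lambda\otimes B_\lambda$ (that piece of linear algebra is correct), whereas the paper expands a single near-extremal section at the given point in orthonormal bases adapted to the filtrations by $\mathcal I(tk\psi_i)$ and keeps only the dominant term; but these are two dressings of the same argument.

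The genuine gap is the inclusion $H^0(\mathcal I(k\psi')\otimes L^k)\subseteq\sum_{\lambda}A_\lambda\otimes B_\lambda$, which is the heart of the matter and which you defer to ``a Nadel-type vanishing for $k\gg0$''. The sheaf-level statement $\mathcal I(k\psi')\subseteq\sum_\lambda \mathcal I(k\lambda\psi_1)\mathcal I(k(1-\lambda)\psi_2)$ only gives $H^0(\mathcal I(k\psi')\otimes L^k)\subseteq H^0\bigl((\sum_\lambda J_\lambda)\otimes L^k\bigr)$ with $J_\lambda$ the product ideal; to replace the right-hand side by $\sum_\lambda H^0(J_\lambda\otimes L^k)=\sum_\lambda A_\lambda\otimes B_\lambda$ you need surjectivity of $\bigoplus_\lambda H^0(J_\lambda\otimes L^k)\to H^0\bigl((\sum_\lambda J_\lambda)\otimes L^k\bigr)$, and this does not follow from vanishing for the individual summands: the kernel of $\bigoplus_\lambda J_\lambda\to\sum_\lambda J_\lambda$ is built from intersections of the $J_\lambda$, which are not multiplier ideals, and both the ideals and the number of summands vary with $k$, so neither Nadel nor Serre vanishing can be invoked uniformly. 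The paper's proof is organised precisely to avoid this: no vanishing theorem appears; instead the adapted bases $\{a_i\},\{b_j\}$ together with the summation formula are used to conclude, section by section, that the coefficients $\alpha_{ij}$ of the chosen extremal section vanish unless $\lambda_i+\mu_j=1$, which is exactly the statement you need. A secondary problem is your jumping-number normalisation: the exponents $\lambda\in J_k$ depend on $k$, so one cannot choose an arithmetic progression of $k$ lying in $J(\lambda\psi_1)\cap J((1-\lambda)\psi_2)$ for all of them simultaneously (Definition~\ref{def:potentialjumping} requires $k\lambda\alpha_j\in\mathbb N$ for every coefficient of a log resolution), hence Proposition~\ref{proposition:bergmanboundalgebraic} is not available with a uniform constant; the paper instead uses the tame bound of Proposition~\ref{proposition:bergmanboundtame}, whose extra factor $e^{-c\lambda\psi_1}$ is harmless at a fixed point of your set $\Omega$ after taking $k^{-1}\ln$ and letting $k\to\infty$. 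With these two repairs your sketch becomes the paper's proof.
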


Said another way, such maximal envelopes on products can be calculated by considering only metrics whose variables separate, i.e.
   \begin{eqnarray*}
     \phi_{[\psi']} &=& \sup\{ \gamma_1+\gamma_2, \gamma_i\in PSH(L_i), \gamma_1\sim \lambda \psi_1, \gamma_2\sim (1-\lambda) \psi_2,\lambda\in (0,1)\}^*.
\end{eqnarray*}

As remarked in the introduction, such a formula resembles known results for the Siciak extremal function on products of domains in $\mathbb C^n$ (see, for example, \cite{Bedford4, Blocki, Sickiak}) and for the pluricomplex Green function on products \cite{Rashkovskii} and these results may suggest other methods of proof.  The techniques we employ here have an algebraic flavour, using results from the previous section to recast the problem in terms of the partial Bergman function and then applying a combination of the K\"unneth formula and the Musta\c{t}\u{a} summation formula for multiplier ideals.  

\begin{proof}[Proof of Theorem \ref{thm:maximalenvelopeproduct}]
Fix $\lambda\in (0,1)$ and suppose $\gamma_i\in PSH(L_i)$ with $\gamma_i\le \phi_{i}$ for $i=1,2$ and $\gamma_1\sim \lambda\psi_1$ and $\gamma_2\sim (1-\lambda)\psi_2$.  Thus there is a constant $C$ such that $\gamma_i\le \lambda \psi_1+C$ and $\gamma_2\le (1-\lambda)\psi_2+C$, which yields
\[\gamma_1 + \gamma_2 \le \lambda \psi' +C + (1-\lambda) \psi'+C = \psi' +2C.\]
Hence $\gamma_1+\gamma_2\sim \psi'$ which implies $\gamma_1+\gamma_2\le \phi_{[\psi']}$.  Taking the supremum over all $\gamma_1$ and then all $\gamma_2$ yields
\[ (\phi_1)_{[\lambda \psi_1]} + (\phi_2)_{[(1-\lambda)\psi_2]}\le \phi_{[\psi']}.\]
Thus taking the supremum over all $\lambda\in (0,1)$ shows the right hand side of \eqref{eq:envelopeproduct} is less than or equal to the left hand side.

Now as $\psi_i$ have algebraic singularities, it is immediate that \eqref{eq:envelopeproduct}  holds on $\Sing(\phi_{[\psi']}) = \Sing(\psi') = \Sing(\psi_1) \times \Sing(\psi_2)$.  So suppose $x\in X_1\times X_2$ is such that $\phi_{[\psi']}(x)\neq -\infty$ and let  $\epsilon>0$. Writing $L=L_1\otimes L_2$, we know from Theorem \ref{thm:partialbergman} that for $k$ sufficiently large there exists an $s\in H^0(\mathcal I(k\psi') L^k)$ with $\|s\|_{\phi}=1$  and
\[ k^{-1} \ln |s(x)|_{\phi}^2 \ge \phi_{[\psi']}(x) - \epsilon.\]

Consider next the multiplier ideals $\mathcal I(tk\psi)$ for $t\in \mathbb R^+$.  These form an nested sequence of ideal sheaves that induce a finite filtration
\[ 0 =H^0(\mathcal I(t_1k\psi L_1^k) \subset H^0(\mathcal I(t_2k\psi) L_1^k) \subset\cdots \subset H^0(\mathcal I(t_Nk\psi) L_1^k)=H^0(L_1^k).\]

Pick an $L^2$-orthonormal basis $\{a_j\}$ for $H^0(L_1^k)$ that is compatible with this filtration, so $a_j \in H^0(\mathcal I(k\lambda_j\psi_1) L_1^k))$.    Similarly pick an $L^2$-orthonormal basis $\{b_j\}$ for $H^0(L_2^k)$ so $b_j\in H^0(\mathcal I(\mu_jk\psi_2)L_2^k)$.   Then since $H^0(L) = H^0(L_1^k) \otimes H^0(L_2^k)$ we can write
\[ s = \sum_{ij} \alpha_{ij} a_j \otimes b_j\]
where $\sum_{ij} |\alpha_{ij}|^2 =1$.   Now since all the metrics in question have algebraic singularities, the Musta\c{t}\u{a} summation formula \cite{Mustata} gives
\[\mathcal I(k\psi') \subset  \sum_{\lambda + \mu = 1 } \mathcal I(k\lambda \psi'_1) \mathcal I(k\mu \psi'_2).\]  
Thus we deduce $\alpha_{ij}=0$ unless $\lambda_j + \mu_j = 1$.

Fix $(i_0,j_0)$ so $|a_{i_0} \otimes b_{j_0}(x)| \ge |a_i\otimes b_j(x)|$ for all $i$ and $j$.  Hence if $N_k = h^0(L^k)$ we have $|s(x)|^2\le N_k |a_{i_0}|^2 |b_{j_0}|^2$. Define $\gamma_1 = k^{-1} \ln |a_{i_0}|^2$ and $\gamma_2 = k^{-1} \ln |b_{j_0}|^2$ so $\gamma_i\in PSH(L_i)$, and
\[ \phi_{[\psi']}(x) -\epsilon \le k^{-1} \ln |s(x)|^2 \le k^{-1} \ln N_k + \gamma_1(x) + \gamma_2(x).\]
 As $\psi_1$ is algebraic it is certainly tame (with constant $c$ say).  Thus from \eqref{proposition:bergmanboundtame} we have
\begin{eqnarray*}
 \gamma_1(x)&\le& (\phi_1)_{[\lambda_j \psi_1]}(x) + k^{-1} \ln C - k^{-1} c \lambda_j \psi_1(x) \le (\phi_1)_{[\lambda_j \psi_1]}(x)  + k^{-1}\ln C'
 \end{eqnarray*}
since $\lambda_j\le 1$ and with a similar expression bounding $\gamma_2(x)$.  Now $N_k$ is bounded by a polynomial in $k$, for $k$ sufficiently large,
\begin{eqnarray*}
 \phi_{[\psi']}(x)- \epsilon&\le& (\phi_1)_{[\lambda_j \psi_1]} + (\phi_2)_{[\mu_j \psi_2]} + \epsilon \\
&\le& \sup_{\lambda\in (0,1)} \{ (\phi_1)_{[\lambda \psi_1]} +  (\phi_2)_{[(1-\lambda) \psi_2]} \}^*+ \epsilon.
\end{eqnarray*}
Since $\epsilon$ was arbitrary this gives the inequality required.  
\end{proof}

We do not expect the previous theorem to be optimal.  In addition to the likelihood of being able to relax the assumptions on the singularity type of $\psi_i$, it seems reasonable to conjecture that an analogous statement holds for maximal envelopes coming from ``test curves'' of singularities.   In the simplest case, where $F=L$,  an example of a test curve is a family $\psi_{\lambda}\in \PSH(L)$ for $\lambda\in (0,1)$ that is concave in $\lambda$ (see below \eqref{definitiontestcurve} for more general definition).

 \begin{conjecture}\label{productconjecture}
Suppose $\psi_{1,\lambda}$ and $\psi_{2,\lambda}$ are test curves for $L_1$ and $L_2$ respectively and set $\psi = \sup\{\psi_{1,\lambda} + \psi_{2,1-\lambda}\}^*.$  Then, possibly under some regularity assumptions of the test curves, the maximal envelope on the product is given by
\[    \phi_{[\psi]} = \sup \{  (\phi_1)_{[\psi_{1,\lambda}]} + (\phi_2)_{[\psi_{2,1-\lambda}]}  : \lambda\in (0,1) \}^*.\]
 \end{conjecture}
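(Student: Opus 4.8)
The plan is to run the argument of Theorem \ref{thm:maximalenvelopeproduct}, with the single multiplier ideal $\mathcal I(k\psi)$ (on $L=L_1\otimes L_2$) replaced by the finite filtration of $H^0(L^k)$ determined by the test curves.

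First I would dispose of the inequality in which the right hand side is bounded above by $\phi_{[\psi]}$.  Fixing $\lambda\in(0,1)$ and taking $\gamma_i\in\PSH(L_i)$ with $\gamma_i\le\phi_i$, $\gamma_1\le\psi_{1,\lambda}+O(1)$ and $\gamma_2\le\psi_{2,1-\lambda}+O(1)$, one has $\gamma_1+\gamma_2\le\phi$ and $\gamma_1+\gamma_2\le\psi_{1,\lambda}+\psi_{2,1-\lambda}+O(1)\le\psi+O(1)$, so $\gamma_1+\gamma_2$ competes in the envelope defining $\phi_{[\psi]}$; taking suprema over $\gamma_1$, $\gamma_2$ and then over $\lambda$, and using that upper semicontinuous regularisation separates variables, gives this direction with no hypothesis on the test curves beyond their definition.

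For the reverse inequality I would work under whatever regularity is needed to make $\psi=\sup_\lambda\{\psi_{1,\lambda}+\psi_{2,1-\lambda}\}^*$ exponentially H\"older continuous and to keep the Bergman machinery of the previous sections available --- for instance taking the $\psi_{i,\lambda}$ piecewise affine in $\lambda$ with algebraic singularities at the breakpoints, and arranging that $L_i-F_i$ is ample (so that $L-F$ is ample on the product); then $\psi$ is, up to a bounded error, the maximum over a fixed finite set of values of $\lambda$ of metrics with algebraic singularities, and Theorem \ref{thm:partialbergman} and Propositions \ref{proposition:bergmanboundtame}, \ref{proposition:bergmanboundalgebraic} apply.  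Granting this, fix $x$ with $\phi_{[\psi]}(x)\ne-\infty$ and $\epsilon>0$ and use Theorem \ref{thm:partialbergman} to produce $k\gg0$ and $s\in H^0(L^k\otimes\mathcal I(k\psi))$ with $\|s\|_\phi=1$ and $k^{-1}\ln|s(x)|_\phi^2\ge\phi_{[\psi]}(x)-\epsilon$.  Choose $L^2$-orthonormal bases $\{a_i\}$ of $H^0(L_1^k)$ and $\{b_j\}$ of $H^0(L_2^k)$ compatible with the finite filtrations by the $\mathcal I(k\psi_{1,\lambda})$ and $\mathcal I(k\psi_{2,\mu})$, so $a_i\in H^0(L_1^k\otimes\mathcal I(k\psi_{1,\lambda_i}))$ and $b_j\in H^0(L_2^k\otimes\mathcal I(k\psi_{2,\mu_j}))$, and write $s=\sum_{ij}\alpha_{ij}\,a_i\otimes b_j$ through the K\"unneth isomorphism $H^0(L^k)=H^0(L_1^k)\otimes H^0(L_2^k)$.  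A Musta\c{t}\u{a}-type summation formula for these filtrations, applied level by level, should force $\alpha_{ij}=0$ unless $\lambda_i+\mu_j\ge1$, equivalently unless $\psi_{1,\lambda_i}+\psi_{2,\mu_j}\le\psi+O(1)$.  Taking the term $a_{i_0}\otimes b_{j_0}$ of largest absolute value at $x$, so $|s(x)|^2\le h^0(L^k)\,|a_{i_0}(x)|^2|b_{j_0}(x)|^2$, and setting $\gamma_1=k^{-1}\ln|a_{i_0}|^2$, $\gamma_2=k^{-1}\ln|b_{j_0}|^2$, Propositions \ref{proposition:bergmanboundtame} and \ref{proposition:bergmanboundalgebraic} bound $\gamma_1(x)\le(\phi_1)_{[\psi_{1,\lambda_{i_0}}]}(x)+O(k^{-1}\ln k)$ and $\gamma_2(x)\le(\phi_2)_{[\psi_{2,\mu_{j_0}}]}(x)+O(k^{-1}\ln k)$; since $\mu_{j_0}\ge1-\lambda_{i_0}$ and $\psi_{2,\cdot}$ decreases in its parameter, monotonicity of $P_{[\cdot]}$ gives $(\phi_2)_{[\psi_{2,\mu_{j_0}}]}\le(\phi_2)_{[\psi_{2,1-\lambda_{i_0}}]}$.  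Letting $k\to\infty$ and then $\epsilon\to0$ yields $\phi_{[\psi]}(x)\le(\phi_1)_{[\psi_{1,\lambda_{i_0}}]}(x)+(\phi_2)_{[\psi_{2,1-\lambda_{i_0}}]}(x)$, hence the desired inequality off the pluripolar set $\Sing(\psi)$ and therefore everywhere after upper semicontinuous regularisation.

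The hard part will be the summation formula: Musta\c{t}\u{a}'s formula is stated for finitely many algebraic multiplier ideals, whereas here one needs a statement about the filtrations of $H^0(L^k)$ cut out by an entire test curve, valid at every level and with enough uniformity in $k$ to feed the Bergman asymptotics; this is presumably the origin of the ``regularity assumptions'' hedged in the statement, and it may force one to restrict to test curves that are themselves algebraic in a suitable sense --- or instead to abandon the algebraic route for the more analytic arguments known for the Siciak extremal function and the pluricomplex Green function on products \cite{Bedford4, Blocki, Sickiak, Rashkovskii}.  A secondary obstacle is the removal of these regularity restrictions by approximation: one must check that a general concave test curve can be approximated below by piecewise affine algebraic ones compatibly with passing to the maximal envelope on both sides, and that Theorem \ref{thm:partialbergman} and the Bergman bounds hold with constants uniform across the finitely many singularity types that appear.
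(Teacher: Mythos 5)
You have not proved this statement, and neither does the paper: this is an explicitly open conjecture, about which the authors say they have ``not seriously attempted'' a proof and only speculate that the right algebraic input would be some kind of ``limit'' of the Musta\c{t}\u{a} summation formula. Your plan correctly reproduces the easy inequality (each $\gamma_1+\gamma_2$ with $\gamma_i\le\phi_i$, $\gamma_1\le\psi_{1,\lambda}+O(1)$, $\gamma_2\le\psi_{2,1-\lambda}+O(1)$ competes in the envelope defining $\phi_{[\psi]}$), which is exactly how the paper argues the corresponding direction of Theorem \ref{thm:maximalenvelopeproduct}, and your identification of the decisive obstacle agrees with the paper's own diagnosis. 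But the reverse inequality is left genuinely open in your proposal: the step ``a Musta\c{t}\u{a}-type summation formula for these filtrations, applied level by level, should force $\alpha_{ij}=0$ unless $\lambda_i+\mu_j\ge 1$'' is precisely the missing theorem, not a consequence of anything established in the paper. Musta\c{t}\u{a}'s formula, as used in Theorem \ref{thm:maximalenvelopeproduct}, controls $\mathcal I(k\sup\{\psi_1,\psi_2\})$, i.e.\ the multiplier ideal of a sum of two ideals; here $\psi$ is a supremum over $\lambda$ of the \emph{sums} $\psi_{1,\lambda}+\psi_{2,1-\lambda}$, so even in your restricted piecewise-affine case one faces the multiplier ideal of a sum of finitely many \emph{products} of ideals, and the summation formula then produces mixed terms $\mathcal I\bigl(\prod_m(\mathfrak a_m\mathfrak b_m)^{c_m}\bigr)$ that you would still need to absorb, via concavity of the test curves, into terms of the separated form $\mathcal I(k\lambda\psi_1\text{-type})\cdot\mathcal I(k\mu\psi_2\text{-type})$ --- uniformly in $k$. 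That absorption is not automatic and is not sketched.

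Two further points you should be aware of if you pursue this. First, the bound $\gamma_2(x)\le(\phi_2)_{[\psi_{2,\mu_{j_0}}]}(x)+O(k^{-1}\ln k)$ via Proposition \ref{proposition:bergmanboundtame} needs the error term to be uniform over the finitely many singularity types appearing at level $k$, and for a general test curve these types vary with $k$; this is the uniformity you flag at the end, but it infects the main estimate, not only the final approximation step. Second, the approximation of a general concave test curve by piecewise-affine algebraic ones must be shown to commute with taking maximal envelopes on \emph{both} sides of the claimed identity (a monotone-limit statement for $\phi_{[\cdot]}$ in the singularity type), which is again not addressed. So the proposal is a reasonable programme, essentially the one the paper itself hints at, but it reduces the conjecture to open problems rather than resolving it.
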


We have not seriously attempted to prove this conjecture and thus will not discuss it much further, but presumably the simplest next case to consider are ``piecewise linear'' test curves that are locally in $\lambda$ of the form $\psi_{\lambda} = \zeta_0 + \lambda \zeta_1$ for fixed singular metrics $\zeta_0$ and $\zeta_1$.   It would be interesting also to investigate if this generalisation has an algebraic counterpart being related to some kind of ``limit'' of the Musta\c{t}\u{a} summation formula.

\section{The Legendre Transform as a Maximal Envelope}\label{sec:legendre}

Our goal in this section is to show how maximal envelopes on the product of $X$ with a disc captures the Legendre transform of a test curve of singular metric, as previously considered by the authors \cite{RossNystrom}.  In the following fix a compact complex $X$ and big line bundle $L$ and some smooth positive metric $\phi$ on $L$.

\begin{definition}\label{definitiontestcurve}
Suppose $F$ is a line bundle on $X$.  A \emph{test curve} on $F$ is a map $\lambda \mapsto \psi_{\lambda} \in \PSH(\lambda F)$ for $\lambda \in (0,c)$ for some $c$ such that
\begin{enumerate}
\item $\psi_\lambda$ is concave in $\lambda$
\item $\lambda^{-1}\psi_{\lambda}$ is decreasing in $\lambda$
\item There is a metric $\phi_F$ on $F$ such that $\lambda^{-1} \psi_\lambda \le \phi_F$ for all $\lambda\in (0,c)$.
\end{enumerate}
\end{definition}

\begin{example}
  The simplest example is when $F$ has a holomorphic section $s$ in which case $\psi_{\lambda} = \lambda \ln |s|^2$ for $\lambda\in (0,1)$ defines a test curve on $F$.
\end{example}

\begin{remark}
This definition differs slightly from that used \cite{RossNystrom}.  To see the compatibility, suppose that $L$ is ample and set $F=L$.  Then let $\psi_{\lambda}\in \PSH(\lambda L)$ for $\lambda\in (0,1)$ be a test curve on $L$.   By definition, we can pick a positive $\phi_L$ on $L$ such that $\phi_L\ge \lambda^{-1}\psi_{\lambda}$ for all $\lambda$.    Now set 
\[\psi'_{\lambda} =
\left\{
  \begin{array}{ll}
\phi_L & \lambda\le 0\\
\psi_{\lambda} + (1-\lambda) \phi_L& \lambda\in(0,1)    \\
-\infty & \lambda\ge 1
  \end{array}\right. 
\]
Then (1) $\psi'_{\lambda}\in PSH(L)$ and is concave in $\lambda$, (2) $\psi'_{\lambda}$ is locally bounded for $\lambda<0$ and (3) $\psi'_{\lambda}=-\infty$ for $\lambda>1$.    If we assume in addition  $\psi_{\lambda}$ has small unbounded locus for $\lambda\in(0,1)$ then this is essentially what is called test curve in the sense of \cite{RossNystrom}.  Notice furthermore that for $\lambda\in (0,1)$ we have that $L -\lambda F = (1-\lambda) L$ is big (it is even ample) and
$$\phi_{[\psi'_{\lambda}]} = \phi_{[\psi_{\lambda}]}.$$
Thus we may apply our regularity result, Theorem \ref{thm:regularityintroduction}, and its consequences, to these envelopes.
\end{remark}

Now fix a test curve $\psi_{\lambda}$, and set
\[ \phi_{\lambda} = \phi_{[\psi_{\lambda}]} \text{ for } \lambda\in(0,c).\]
It is convenient also to define $\phi_{\lambda} = \phi$ for $\lambda\le 0$ and $\phi_{\lambda} = -\infty$ for $\lambda\ge c$.  

\begin{definition}
  The \emph{Legendre transform} of a test curve $\psi_{\lambda},$
  is defined to be
  \[\widehat{\phi}_t:=(\sup_{\lambda\in
    \mathbb{R}}\{\phi_{\lambda}+t\lambda\})^*,\] where
  $t\in[0,\infty).$
\end{definition}

In \cite{RossNystrom} the authors prove that, when $L$ is ample, the Legendre transform is a weak geodesic ray in the space of metrics in $\PSH(L)$ emanating from $\phi$.  By this it is meant that  $t=-\ln |z|$ where $z$ is a coordinate on the closed unit disc $B$ in $\mathbb C$, then $\Phi(x,t) := \widehat{\phi}_t(x)$ defines a positive metric on the pullback of $L$ to $X\times B$ that is $S^1$ invariant and solves the Homogeneous Monge Amp\`ere equation $MA(\Phi)=0$.    
\medskip

We proceed now to show how the Legendre transform itself a maximal envelope.  Let $\pi\colon X\times B\to X$ be the projection and set
\[ \psi' =\sup_{\lambda\in (0,c)} \{ \psi_{\lambda} + \lambda t\}^*\]
where the $c$ is as in the definition of the test curve.

\begin{theorem}
Let $\phi' = \phi + ct$.  Then over $X\times B$ we have
  \[\phi'_{[\psi']} = \widehat{\phi_t}\]
\end{theorem}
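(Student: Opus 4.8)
The plan is to prove the two inequalities $\widehat{\phi}_t \le \phi'_{[\psi']}$ and $\phi'_{[\psi']}\le \widehat{\phi}_t$ separately, working on $X\times B$ with the coordinate $w$ on $B$ and $t=-\ln|w|^2$ (so $t\in[0,\infty)$), and recalling that $\psi'=\sup_{\lambda\in(0,c)}\{\psi_\lambda+\lambda t\}^*$ and $\phi'=\phi+ct$. The first inequality is the easy direction: for each fixed $\lambda\in(0,c)$ the metric $\phi_\lambda+\lambda t = \phi_{[\psi_\lambda]}+\lambda t$ is a positive singular metric on the pullback of $L$ to $X\times B$ (since $t$ is a positive metric on $\mathcal{O}_B(1)$, or rather $\ln|w|^2$ contributes a positive current away from $w=0$ and $t\ge 0$ makes this work on $B$), it is bounded above by $\phi+\lambda t\le \phi+ct=\phi'$, and it is bounded above by $\psi_\lambda+\lambda t+O(1)\le \psi'+O(1)$. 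Hence $\phi_\lambda+\lambda t\le \phi'_{[\psi']}$ for each $\lambda$; taking the supremum over $\lambda\in(0,c)$ and then the upper semicontinuous regularisation gives $\widehat{\phi}_t\le\phi'_{[\psi']}$. One must also check the contributions of $\lambda\le 0$ (where $\phi_\lambda=\phi\le\phi'$ trivially since $t\ge 0$) and note $\lambda\ge c$ contributes $-\infty$, so nothing is lost.

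For the reverse inequality I would exploit the $S^1$-invariance. Since $\psi'$ and $\phi'$ are $S^1$-invariant in $w$, the envelope $\phi'_{[\psi']}$ is $S^1$-invariant (by the invariance-under-automorphisms remark in Section~\ref{sec:envelopes}), so it is a function of $(x,t)$ only, say $\phi'_{[\psi']}(x,w)=u_t(x)$. Moreover $u_t$ is convex in $t$ (this is the standard consequence of subharmonicity of an $S^1$-invariant psh function in the $w$-variable). The strategy is to take the Legendre transform of $u_t$ in $t$: set $\sigma_\lambda := (\inf_t\{u_t - \lambda t\})^{**}$ or, more directly, show that $\sigma_\lambda\le \phi_\lambda$ for each $\lambda$, where $\sigma_\lambda$ is obtained from $u_t$ by the inverse Legendre transform. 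Concretely, because $u_t\le\phi'=\phi+ct$ and $u_t\le\psi'+O(1)=\sup_\mu\{\psi_\mu+\mu t\}+O(1)$, a fixed slope $\lambda$ picks out $\sigma_\lambda := \sup\{\gamma\in\PSH(\lambda F): \gamma+\lambda t\le u_t \text{ for all }t\}$, and one shows $\sigma_\lambda\le\phi$ (from $t=0$, or letting $t\to$ appropriate limits) and $\sigma_\lambda\le\psi_\lambda+O(1)$ (by comparing asymptotics as $t\to\infty$ using the concavity/monotonicity axioms of a test curve and the formula for $\psi'$), hence $\sigma_\lambda\le\phi_{[\psi_\lambda]}=\phi_\lambda$. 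Then $u_t = \sup_\lambda\{\sigma_\lambda+\lambda t\}^{(*)}\le\sup_\lambda\{\phi_\lambda+\lambda t\}^*=\widehat{\phi}_t$, using convexity of $u_t$ in $t$ to recover it from its Legendre transform.

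The main obstacle I expect is the reverse direction, specifically making rigorous the claim that an $S^1$-invariant positive metric $u_t$ on $X\times B$, convex in $t\in[0,\infty)$, is recovered as the double Legendre transform $\sup_\lambda\{\sigma_\lambda+\lambda t\}$ with each slice $\sigma_\lambda$ lying in $\PSH(\lambda F)$ and bounded by $\phi$ and by $\psi_\lambda+O(1)$. The delicate points are: (i) the behaviour at the endpoints $t=0$ and $t\to\infty$ (the disc $B$ is not all of $\mathbb{C}$, so $t$ ranges only over $[0,\infty)$, which is exactly the range of the Legendre variable, and one must be careful that the boundary condition at $\partial B$, i.e.\ $t=0$, reads off $\sigma_\lambda\le\phi$ correctly); (ii) verifying that the Legendre slices $\sigma_\lambda$ are genuinely psh on $X$ and not merely on a dense open set — this is where one invokes that $u_t$ is a positive metric on the \emph{total space} $X\times B$ rather than just a family, an argument analogous to the Kiselman minimum principle / the slicing arguments in \cite{RossNystrom}; and (iii) matching the additive constants between $\psi'=\sup\{\psi_\lambda+\lambda t\}^*$ and the upper semicontinuous regularisations, so that the $O(1)$ terms defining $[\psi']$ translate into the correct $O(1)$ terms for each $[\psi_\lambda]$. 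I would handle (ii) by citing the relevant slicing/geodesic results already established in \cite{RossNystrom} for the Legendre transform, since the theorem is essentially an identification of two descriptions of the same object, and handle (i) and (iii) by a careful but routine bookkeeping of the test-curve axioms.
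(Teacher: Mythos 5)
Your overall strategy coincides with the paper's: prove the two inequalities separately, with the easy direction coming from exhibiting candidates for the envelope on $X\times B$, and the hard direction coming from $S^1$-invariance, a Kiselman-type minimum principle in the $w$-variable, and the involutivity of the Legendre transform. However, as written both halves of your argument lean on the inequality $\phi_{[\psi_\lambda]}\le\psi_\lambda+O(1)$ (you use it to say $\phi_\lambda+\lambda t\le\psi'+O(1)$ in the easy direction) and on $\phi'_{[\psi']}\le\psi'+O(1)$ (you slice $u_t=\phi'_{[\psi']}$ directly and invoke $u_t\le\psi'+O(1)$ in the hard direction). This is exactly the assertion that the maximal envelope has the same singularity type as the prescribing metric, which the Remark at the end of Section~\ref{sec:envelopes} flags as \emph{not} obvious: the constants bounding the candidates in the definition of $P_{[\psi]}\phi$ are not uniform, so after taking the increasing limit and the upper semicontinuous regularisation one cannot simply assert $\phi_{[\psi]}\le\psi+O(1)$ unless $\psi$ has, e.g., analytic singularities -- and the theorem here is stated for a general test curve. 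This is a genuine gap, though a repairable one: the paper's device is to argue uniformly in the constant, i.e.\ in the easy direction to take an arbitrary $\gamma\in\PSH(L)$ with $\gamma\le\phi$ and $\gamma\le\psi_\lambda+C$, observe $\gamma+\lambda t\le\phi'$ and $\gamma+\lambda t\le\psi'+C$, so $\gamma+\lambda t\le P_{\psi'+C}\phi'$, and only then take suprema, $C\to\infty$ and regularisations; and in the hard direction to slice the finite-$C$ envelope, $\gamma^C_\lambda(x)=\inf_{t\ge0}\{P_{\psi'+C}\phi'(x,t)-\lambda t\}$, rather than $\phi'_{[\psi']}$ itself, so that the Legendre involution yields the honest bound $\gamma^C_\lambda\le\psi_\lambda+C$.

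On the hard direction, the route you sketch (convexity in $t$, Legendre slices, recover by the double transform) is the paper's route, and the Kiselman minimum principle you mention is precisely how the paper establishes plurisubharmonicity of the slices in one line -- there is no need to outsource this to the slicing results of \cite{RossNystrom}. Once you work at the level of $P_{\psi'+C}\phi'$, the remaining steps are short: $\gamma^C_\lambda\le\phi'(\cdot,0)=\phi$ from the $t=0$ slice, $\gamma^C_\lambda\le\psi_\lambda+C$ from the involution (using concavity of the test curve), hence $\gamma^C_\lambda\le\phi_{[\psi_\lambda]}$, and then the supremum over $\lambda$ together with the involution applied to the ($t$-convex, $S^1$-invariant) function $P_{\psi'+C}\phi'$ gives $P_{\psi'+C}\phi'\le\widehat{\phi}_t$; letting $C\to\infty$ and regularising finishes. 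So the plan is sound and matches the paper, but you must replace every use of ``$\phi_{[\cdot]}\le\psi+O(1)$'' by the finite-$C$ bookkeeping, and actually execute the slicing argument rather than defer it.
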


\begin{proof}
Suppose $\gamma\in PSH(L)$ with $\gamma\le \phi$ and $\gamma\le \psi_{\lambda} +C$ for some $\lambda\in (0,c)$ and constant $C$.  Then as $t\ge 0$,  $\gamma + \lambda t \le \gamma + ct\le \phi + ct = \phi'$ and
\[ \gamma + \lambda t \le \psi_{\lambda} + \lambda t + C \le \psi' + C.\]
Hence $\gamma + \lambda t$ is a candidate for the envelope $P_{\psi'+C} \phi'$ giving
\[ \gamma + \lambda t \le P_{\psi'+C} \phi'\le \phi'_{[\psi']}.\]
Taking the supremum over all such $\gamma$ gives
\[ P_{\psi_{\lambda}+C} \phi + \lambda  t \le \phi'_{[\psi']},\]
so taking the limit as $C$ tends to infinity and then the upper semicontinuous regularisation yields
\[\phi_{\lambda} + \lambda t  \le  \phi'_{[\psi']}\]
and then taking the supremum over all $\lambda\in (0,c)$ gives
\[\widehat{\phi}_t \le \phi'_{[\psi']}.\]

For the other inequality, for fixed $C>0$ and $\lambda\in (0,c)$ define
\[\gamma^C_{\lambda}(x) = \inf_{t\ge 0} \{ P_{\psi'+C} \phi'(x,t) + \lambda  t\}.\]
Thus $\gamma^C_{\lambda}$ is an infimum of plurisubharmonic functions on $X\times B$ that depends only on the modulus of $w\in B$, so by the Kiselman minimum principle $\gamma_{\lambda}$ is plurisubharmonic.    Now one clearly has $\gamma^C_{\lambda}(x)\le \phi'(x,0)= \phi(x)$ and moreover
\[ \gamma^C_\lambda \le \inf_{t\ge 0} \{\psi' +C + \lambda t \} = \psi_{\lambda} + C\]
as the Legendre transform is an involution.  Hence, $\gamma_{\lambda}^C\le \phi_{[\psi_{\lambda}]}$ and so
\[\gamma^C_{\lambda} + \lambda  t \le  \widehat{\phi}_t.\]
Now taking the supremum over all $\lambda\in (0,c)$ and using the involution property of the Legendre transform again yields
\[ P_{\psi'+C}\phi' \le \widehat{\phi}_t.\]
Taking $C$ to infinity and the upper semicontinuous regularisation gives 
\[ \phi'_{[\psi']} \le \widehat{\phi}_t\]
which completes the proof.
\end{proof}

\begin{remark}
  As previously remarked, the previous theorem is in fact a special case of the general conjecture we made for the maximal envelope of a product \eqref{productconjecture}.  Essential in the above proof is that all quantities defined on $B$ have been taken to be $S^1$ invariant (i.e.\ they depend only on $t = -\ln |w|$ rather than $w$) and so the Kiselman minimal principle can be applied.  
\end{remark}

\begin{remark}\label{rmk:regularity}
  As a consequence of the previous theorem, clearly any regularity enjoyed by the maximal envelope $\phi'_{[\psi']}$ will be similarly enjoyed by the geodesic $\widehat{\phi}_t$.   A simple modification of the argument in the previous section shows that, as long as $\psi'$ is exponentially H\"older continuous and $\phi$ is smooth, then $\widehat{\phi}_t$ is $C^{1,1}$ in the direction of $X$ (the modification needed is simply to replace $Y$ with $L^*\times B$ and demand that the vector fields in \eqref{lem:existencevectorfields} lie in the subbundle $TL^*\subset T(L^*\times B)$).   Moreover one can show with these same techniques that $\widehat{\phi}_t$ is Lipschitz in the $t$ variable, but we have not been able to use them to show it is $C^{1,1}$ in this direction as well.  
\end{remark}

\section{Exhaustion functions of Equilibrium sets}\label{gradientmaps}

We continue to consider the maximal envelopes
\[\phi_{\lambda} := \phi_{[\psi_{\lambda}]}\]
where $\psi_{\lambda}$ is a test curve of singularities, and the associated equilibrium sets
\[D_{\lambda}:= D(\phi,\psi_{\lambda}) = \{ x\in X : \phi_{\lambda} = \phi\}.\]
Observe that $\phi_{\lambda}$ are decreasing in $\lambda$, and thus $D_{\lambda}$ are closed and increasing, i.e.\
\[ D_{\lambda}\subset D_{\lambda'} \quad \text{ if } \quad \lambda\ge \lambda'.\]

\begin{definition}
  Denote the exhaustion function  $H\colon X\to \mathbb R$ by
  \begin{eqnarray*}
  H(x) &=& \sup\{ \lambda>0 : x\in D_\lambda\}\\
&=&\sup\{ \lambda>0 : \phi_{\lambda}(x) = \phi(x)\}.
\end{eqnarray*}
\end{definition}
When necessary we shall write $H_{\phi,\psi}$ or $H_{\phi}$ to emphasise the
dependence on the metrics in question.  Note that since each $D_{\lambda}$ is closed, it is clear that $H_{\phi}$ is upper-semicontinuous.\medskip

We show now that this exhaustion function is the time derivative of the associated geodesic ray coming from the Legendre transform:

\begin{theorem}\label{thm:rightderivative2}
Fix a smooth metric $\phi\in \PSH(L)$ and let $\widehat{\phi}_t$ be the Legendre transform associated to the test curve $\psi_{\lambda}$.  Then
\[ \frac{d\widehat{\phi}_t}{dt}\bigg\vert_{t=0^+}  = \,H_{\phi}\]
\end{theorem}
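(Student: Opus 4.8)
The strategy is to compute the right derivative of $\widehat{\phi}_t$ at $t=0$ directly from the definition of the Legendre transform, $\widehat{\phi}_t = (\sup_{\lambda}\{\phi_\lambda + t\lambda\})^*$, and match it against the definition of $H$. Since $\phi_\lambda = \phi$ for $\lambda \le 0$ and $\phi_\lambda$ is concave (as a function of $\lambda$, being a Legendre-type object — it is the infimum over the linear family coming from the concave test curve $\psi_\lambda$, hence concave), the sup defining $\widehat{\phi}_t$ behaves well in $t$: for each fixed $x$, $t \mapsto \widehat{\phi}_t(x)$ is convex and increasing, so $\frac{d}{dt}\big|_{t=0^+}\widehat{\phi}_t(x)$ exists pointwise and equals $\lim_{t\to 0^+} t^{-1}(\widehat{\phi}_t(x) - \phi(x))$ (using $\widehat{\phi}_0 = \phi$). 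The first step is therefore to establish $\widehat{\phi}_0 = \phi$ and the convexity/monotonicity of $t \mapsto \widehat{\phi}_t(x)$, which follows from the fact that $\sup$ of affine functions of $t$ is convex, together with the observation that upper-semicontinuous regularisation in $x$ does not destroy convexity in $t$ (one can argue $S^1$-equivariantly on $X\times B$, where $\Phi$ is a genuine psh metric, so $t\mapsto \Phi(x,t)$ is convex for a.e.\ $x$ and one passes to the usc regularisation).

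The second step is the key pointwise computation. Fix $x$. If $x \in D_\lambda$ for some $\lambda>0$, i.e.\ $\phi_\lambda(x) = \phi(x)$, then $\widehat{\phi}_t(x) \ge \phi_\lambda(x) + t\lambda = \phi(x) + t\lambda$, so $t^{-1}(\widehat{\phi}_t(x)-\phi(x)) \ge \lambda$ for all $t>0$; taking the sup over such $\lambda$ gives $\frac{d}{dt}\big|_{0^+}\widehat{\phi}_t(x) \ge H(x)$. For the reverse inequality, suppose $\lambda' > H(x)$; then $x \notin D_{\lambda'}$, so $\phi_{\lambda'}(x) < \phi(x)$, and by monotonicity $\phi_\mu(x) < \phi(x)$ for all $\mu \ge \lambda'$ — in fact by upper semicontinuity and concavity one gets a uniform gap: there is $\delta>0$ with $\phi_\mu(x) \le \phi(x) - \delta$ for $\mu$ slightly below $\lambda'$ too, or more cleanly $\phi_{\lambda'}(x) \le \phi(x) - \delta$. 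Then for $\mu \ge \lambda'$, $\phi_\mu(x) + t\mu \le \phi(x) - \delta + t\mu$, while for $\mu \le \lambda'$, $\phi_\mu(x) + t\mu \le \phi(x) + t\lambda'$. Splitting the supremum over $\mu$ at $\lambda'$, one gets $\sup_\mu\{\phi_\mu(x)+t\mu\} \le \phi(x) + \max\{t\lambda', -\delta + tc\}$ (using $\phi_\mu = -\infty$ for $\mu \ge c$), so for $t$ small this is $\phi(x) + t\lambda'$, giving $t^{-1}(\widehat{\phi}_t(x) - \phi(x)) \le \lambda'$ before regularisation. The subtlety is the usc regularisation in $x$: one must check that it does not increase the right derivative, which should follow because the bound $\widehat{\phi}_t \le \phi + t\lambda'$ near $x$ can be arranged on a whole neighbourhood when $\lambda' > H(x')$ for all $x'$ nearby — but $H$ is only usc, so instead one argues that the inequality $t^{-1}(\widehat{\phi}_t - \phi) \le \lambda'$ holds on the set where $H < \lambda'$, which is open, hence survives regularisation there.

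The main obstacle I expect is precisely the interchange of the usc regularisation (in the $X$-variable) with the $t$-derivative and with the pointwise characterisation via $H$. The clean way around this is to work on $X \times B$ with the $S^1$-invariant metric $\Phi$, which is a genuine positive (psh) metric by the results of \cite{RossNystrom}; there the monotonicity and convexity in $t = -\ln|w|$ are automatic, and the pointwise identities above hold a.e.\ without regularisation issues, since a plurisubharmonic function restricted to a complex line (here the $w$-disc) equals its own usc regularisation off a polar set. One then only needs that $H$ equals the a.e.-defined right derivative, and since $H$ is usc and the right derivative is (by convexity in $t$) also well-behaved, an a.e.\ equality combined with the semicontinuity properties upgrades to the full statement. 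A secondary technical point is establishing the uniform gap $\phi_{\lambda'}(x) \le \phi(x) - \delta$ when $x \notin D_{\lambda'}$: this uses that $\lambda \mapsto \phi_\lambda(x)$ is concave and equals $\phi(x)$ on $(-\infty, H(x)]$, so it strictly decreases past $H(x)$ with a definite slope, which can be made quantitative using concavity alone.
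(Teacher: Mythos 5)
Your lower bound is fine, and your overall route (convexity in $t$ together with Legendre duality between $\lambda\mapsto\phi_\lambda$ and $t\mapsto\widehat{\phi}_t$) is the same mechanism the paper exploits. The genuine gap is in the upper bound, exactly at the point you flag: the passage through the upper semicontinuous regularisation. Your estimate $\sup_\mu\{\phi_\mu(x')+t\mu\}\le\phi(x')+t\lambda'$ is established only for $t$ below a threshold depending on the gap $\delta(x')=\phi(x')-\phi_{\lambda'}(x')$ at the individual point $x'$; on the open set $\{H<\lambda'\}$ this gap is merely positive at each point, with no uniform lower bound, so at a fixed small $t$ you cannot take $\limsup_{x'\to x}$ of the unregularised supremum and conclude $\widehat{\phi}_t(x)\le\phi(x)+t\lambda'$. ``Holds at every point of an open set, hence survives regularisation'' is not valid when the admissible range of $t$ shrinks as $x'$ varies. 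The repair is short but different from what you wrote: use upper semicontinuity of $\phi_{\lambda'}-\phi$ (not of $H$). Since $\phi_{\lambda'}(x)\le\phi(x)-\delta$ and $\phi$ is continuous, there is a neighbourhood $V$ of $x$ on which $\phi_{\lambda'}\le\phi-\delta/2$; then for every $x'\in V$ and every $t\le\delta/(2(c-\lambda'))$ the splitting of the supremum at $\mu=\lambda'$ (using that $\phi_\mu$ is decreasing in $\mu$ and $\phi_\mu\equiv-\infty$ for $\mu\ge c$) gives $\sup_\mu\{\phi_\mu(x')+t\mu\}\le\phi(x')+t\lambda'$ uniformly on $V$, and since the right-hand side is continuous this bound does survive the regularisation and yields the upper bound at $x$.

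Your fallback (work on $X\times B$, get the identity almost everywhere, upgrade by semicontinuity) is also not a proof as stated: $H$ is upper semicontinuous but not plurisubharmonic, and two upper semicontinuous functions agreeing almost everywhere need not agree everywhere, so some mechanism for the upgrade is required. That mechanism is exactly what the paper supplies: it sets $\gamma_\lambda:=\inf_{t\ge0}\{\widehat{\phi}_t-\lambda t\}$, notes that $\gamma_\lambda$ is plurisubharmonic by the Kiselman minimum principle, checks $\gamma_\lambda=\phi_\lambda$ almost everywhere via the involution of the Legendre transform, and concludes $\gamma_\lambda=\phi_\lambda$ identically because both are positive metrics; the theorem then follows at every point from a one-variable convexity lemma stating that for a convex $u_t$ constant for $t\le0$ one has $\frac{du}{dt}\big\vert_{t=0^+}=\sup\{\lambda:\inf_t(u_t-\lambda t)=u_0\}$. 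Either repair (the uniform-gap neighbourhood argument above, or the Kiselman route) closes the gap; as written, neither of your two proposed resolutions does.
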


The proof will rely on an elementary lemma from convex geometry.

\begin{lemma}\label{lem:convex}
  Let $u=u_t$ be a real valued convex function in the one variable $t$ such that $u_t=u_0$ for $t\le 0$ and set $v_{\lambda} = \inf_t \{ u_t-\lambda t\}$. Then 
\[\frac{du}{dt}\bigg\vert_{t=0^+} = \sup \{ \lambda : v_{\lambda} = u_{0}\}. \]
\end{lemma}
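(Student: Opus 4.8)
The plan is to prove this as a purely one-variable statement about convex functions, since that is all that Lemma~\ref{lem:convex} asks. Write $m = \frac{du}{dt}\big\vert_{t=0^+}$, which exists (possibly $+\infty$ if we allow that, but let us assume $u$ is finite and the right derivative exists as a real number, with $m \ge 0$ since $u$ is constant for $t \le 0$ and convex hence nondecreasing for $t \ge 0$). First I would show that $v_\lambda = u_0$ for every $\lambda \le m$. Indeed, by convexity $u_t \ge u_0 + mt$ for all $t$ (the right-derivative at $0^+$ gives a supporting line there, using that for $t<0$ we have $u_t=u_0\ge u_0+mt$ since $m\ge 0$), so for $\lambda \le m$ and $t \ge 0$ we get $u_t - \lambda t \ge u_0 + (m-\lambda)t \ge u_0$, while for $t \le 0$, $u_t - \lambda t = u_0 - \lambda t \ge u_0$; taking the infimum and noting $t=0$ achieves $u_0$ gives $v_\lambda = u_0$. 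Hence $\sup\{\lambda : v_\lambda = u_0\} \ge m$.

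For the reverse inequality I would show that $v_\lambda < u_0$ whenever $\lambda > m$. Since $m$ is the right derivative at $0$, by definition of the derivative (or by convexity of the difference quotient $t \mapsto (u_t - u_0)/t$, which is nondecreasing and has infimum $m$ as $t \downarrow 0$) there exists a small $t_0 > 0$ with $(u_{t_0} - u_0)/t_0 < \lambda$, i.e. $u_{t_0} - \lambda t_0 < u_0$. Therefore $v_\lambda \le u_{t_0} - \lambda t_0 < u_0$, so such $\lambda$ does not belong to the set $\{\lambda : v_\lambda = u_0\}$. Combining the two inequalities gives $\sup\{\lambda : v_\lambda = u_0\} = m$, which is the claim.

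I expect no serious obstacle here; the only points requiring care are bookkeeping ones: checking that the supporting-line inequality $u_t \ge u_0 + mt$ holds for \emph{all} $t$ (including $t<0$, where it is automatic from $m \ge 0$ and $u_t = u_0$), and handling the degenerate case $m = +\infty$ (in which case $v_\lambda = u_0$ for all $\lambda$, since $u$ grows faster than any linear function, and both sides equal $+\infty$) or the case where the set $\{\lambda : v_\lambda = u_0\}$ might a priori be empty (it is not, since $v_0 = \inf_t u_t = u_0$ by the monotonicity of $u$ for $t\ge 0$ and constancy for $t\le 0$). The structure of the argument is simply: one inequality comes from the supporting line at $0^+$, the other from the definition of the right derivative as an infimum of difference quotients.
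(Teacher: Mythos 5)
Your argument is correct and is essentially the paper's own proof: both inequalities rest on the monotonicity of the difference quotient $(u_t-u_0)/t$ (the paper's $w(\epsilon)$), one direction via the supporting line of slope $m$ at $t=0$ showing $v_\lambda=u_0$, the other via a point where the quotient drops below $\lambda$ (the paper phrases this contrapositively, taking $\lambda>\sup S$ and producing $t>0$ with $w(t)<\lambda$). The only slip is cosmetic: the claim that $v_\lambda=u_0$ for \emph{every} $\lambda\le m$ fails for $\lambda<0$ (there $v_\lambda=-\infty$, since $u_t\equiv u_0$ as $t\to-\infty$), and indeed your estimate for $t\le 0$ tacitly uses $\lambda\ge 0$; but since only $\lambda\in[0,m]$ is needed to conclude $\sup\{\lambda:v_\lambda=u_0\}\ge m$, the argument is unaffected.
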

\begin{proof}
Observe that by convexity and the assumption that $u_t$ is constant for $t<0$ we have $u_t$ is increasing in $t$.  Moreover $v_{\lambda} \le u_0$ for all $\lambda$ and so $v_{0} = \inf_t u_t = u_0$.  Thus the set $S=\{\lambda : v_{\lambda} = u_{0}\}$ is a non-empty interval in $\mathbb R$.  Setting $w(\epsilon) := \epsilon^{-1} (u_{\epsilon} - u_{0})$ for $\epsilon>0$,  the convexity of $u_t$ implies $w$ is non-decreasing and we have to show
\[\lim_{\epsilon\to 0^+} w(\epsilon) = \sup S.\]
First suppose $\lambda\in S$.  Then $u_t-\lambda t\ge u_{0}$ for all $t$, and so $w(\epsilon)\ge \lambda$ for all $\epsilon>0$, and hence $\lim_{\epsilon\to 0^+} w(\epsilon)\ge \sup S$.  In the other direction, suppose $\lambda>\sup S$ and pick some $\lambda'\in S$ with $\lambda'<\lambda$.  Then as $\lambda\notin S$ there is a $t$ so that $u_{t} - t\lambda<u_{0}$.  But as $\lambda'\in S$ we certainly have $u_{0} = v_{\lambda'}\le u_t - \lambda' t$ and putting these together shows $t>0$.  Thus $w(t)<\lambda$ and so $\lim_{\epsilon\to 0+} w(\epsilon)\le \lambda$ by monotonicity of $w$.  Since $S$ is an interval and this holds for all $\lambda>\sup S$ we conclude $\lim_{\epsilon\to 0+} w(\epsilon)\le \sup S$ as required.
\end{proof}

\begin{proof}[Proof of Theorem \ref{thm:rightderivative}]

Define 
\[ \gamma_{\lambda} = \inf_t \{\widehat{\phi}_t-\lambda t\}  \]
By the Kiselman minimum principle $\gamma_{\lambda}\in \PSH(L)$.   We claim that
\begin{equation}
 \gamma_{\lambda} = \phi_{\lambda}.\label{eq:kiselmanidentity}
\end{equation}
Assuming this, the result we want follows directly from the previous Lemma \eqref{lem:convex} as
\[\frac{d\widehat{\phi}_t}{dt}\bigg\vert_{t=0^+} = \sup \{ \lambda : \gamma_{\lambda} = \widehat{\phi}_{0} \} = \sup \{ \lambda : \phi_{\lambda}  = \phi\} = H_{\phi}.\]

Thus is remains to prove \eqref{eq:kiselmanidentity}.   Note that $\widehat{\phi}_t(x) = \sup_{\lambda} \{\phi_{\lambda}(x) + \lambda t\}$ for almost every $x$ (the point being that the Legendre transform also requires us to take the upper semicontinuous regularisation).   Thus for such $x$ an elementary argument (essentially the involution of the Legendre transform) yields $\gamma_{\lambda}(x) = \phi_{\lambda}(x)$.  Thus $\gamma_{\lambda}$ and $\phi_{\lambda}$ are two plurisubharmonic functions that agree almost everywhere, and hence are identically equal.

\end{proof}

\section{Divisorial Exhaustion maps}\label{sec:divisorial}

We now restrict  to the special case of the exhaustion map associated to a divisor $D$.  Let $\lambda_{\max} = \sup\{\lambda : L-\lambda D \text{ is big}\}$.  We assume that $D$ does not intersect $\mathbb B_+(L-\lambda D)$ for all $\lambda\in (0,\lambda_{\max})$.   Define $\psi_{\lambda} = \lambda \ln |s_D|^2$ for $\lambda\in (0,\lambda_{\max})$ where $s_D$ is the defining section of $D$.   

As above we set $\phi_{\lambda} = \phi_{[\psi_{\lambda}]}$ and $D_{\lambda} = \{\phi_{\lambda} = \phi\}$  (and the usual convention that $\phi_{\lambda} = \phi$ for $\lambda<0$ so $D_{\lambda} =X$, and for $\lambda>\lambda_{\max}$ we set $\phi_{\lambda} \equiv -\infty$ so $D_{\lambda}$ is empty.).  We write the associated exhaustion function as $H$ or $H_D$.

Now the volume of the equilibrium sets measures the rate of growth of the subspace of sections of $L^k$ contained in the relevant multiplier ideal $\mathcal I(k\psi_{\lambda}) = \mathcal I(k\ln |s_D|^2) = O(-kD)$ (Corollary \ref{cor:equilibriumvolume}).  If $\lambda\in (0,\lambda_{max})$ then $L-\lambda D$ is big, and thus
\[h^0(k(xL-\lambda D)) = \vol(L-\lambda D) k^n + O(k^{n-1})\]
where $\vol(L-\lambda D) := \frac{1}{n!}\int_X (c_1(L-\lambda D))^n$.  Thus we conclude
\begin{equation}
  \label{eq:volume}
\vol(D_{\lambda},MA(\phi)) := \int_{D_{\lambda}} MA(\phi) = \vol(L-\lambda D) \quad\text{ for all } \lambda.  
\end{equation}

Our goal is to analyse this volume change in terms of the sections of $kL$ that vanish to a certain order along $D$.   To do so it is natural to express our results in terms of the Okounkov body  whose construction we briefly recall now (and refer the reader to \cite{Lazarsfeld, RossNystrom, Kaveh} for details).

Let $X \supset Y_{n-1} \supset \cdots \supset Y_1$ be a flag of smooth subvarieties of $X$ with $\dim Y_i =i$ and $n=\dim X$.   Starting with $Y_{n-1}$ we have a valuation 
\[\nu_1\colon H^0(X,L^k) \to \mathbb Z \quad\text{given by }\quad \nu_1(s) = \ord_{Y_{n-1}}(s),\]
where $\ord_{Y_{n-1}}$ is the order of vanishing along $Y_{n-1}$. If $t$ denotes the defining equation for $Y_{n-1}$, then by definition $\bar{s} := s t^{-\nu_1(s)}$ restricts to a non-trivial section of $L|_{Y_{n-1}}$, and thus we have a second valuation  \[\nu_2(s) = \ord_{Y_{n-2}}(\bar{s}|_{Y_{n-2}}).\]  Proceeding in this way gives a map
\[\nu\colon H^0(L^k) \to \mathbb Z^n \quad\text{given by }\quad \nu(s) = (\nu_1(s),\ldots,\nu_n(s)).\]
We set $\Delta_k = k^{-1} \im(\nu\colon H^0(L^{\otimes k})\to \mathbb Z^n)$ and the Okounkov body is defined to be
 \[ \Delta= \Delta(X,L) =\overline{\bigcup_k  \operatorname{Convex}(\Delta_k)}\]
where $\operatorname{Convex}$ denotes the taking the convex hull and the bar denotes topological closure.

When $L$ is big, the volume of $\Delta$ taken with respect to the Lebesgue measure is precisely the volume of $L$  taken with respect to the line bundle $L$.  This fundamental property lies at the cornerstone of the work of Lazarsfeld-Musta\c{t}\u{a} who use the Okounkov body to study the volume functional on the space of big line bundles, and of Kaveh-Khovanskii who give applications by considering even more general valuations.  \medskip

Suppose now our flag of smooth subvarieties whose divisorial part is given by $D$ (i.e.\ $Y_{n-1}=D$ in the notation above).  

  \begin{theorem}\label{thm:pushforwarddivisor}
The pushforward of the volume form $MA(\phi)$ under the exhaustion function $H_D$ is given by
    \[H_{D*} (MA(\phi)) = p_{1*} (d\sigma|_{\Delta(X,L)})\]
    where $d\sigma$ denotes the Lebesgue measure on $\mathbb R^n$ and $p_1\colon \mathbb R^n\to \mathbb R$ is the projection to the first coordinate.
  \end{theorem}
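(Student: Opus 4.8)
The plan is to identify the two measures by computing their values on the half-lines $(\lambda,\infty)$, $\lambda\in\mathbb R$; since these sets form a $\pi$-system that generates the Borel $\sigma$-algebra of $\mathbb R$ and both measures are finite, agreement there, together with agreement of total masses, forces the two measures to coincide. The total masses are equal automatically: $H_{D*}(MA(\phi))(\mathbb R)=\int_X MA(\phi)=\vol(L)$, while $p_{1*}(d\sigma|_{\Delta(X,L)})(\mathbb R)=\sigma(\Delta(X,L))=\vol(L)$ by the defining property of the Okounkov body \cite{Lazarsfeld,RossNystrom}.

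For the left hand side, recall that the $D_{\mu}$ are closed and increase as $\mu$ decreases and that $H_D(x)=\sup(\{0\}\cup\{\mu>0:x\in D_{\mu}\})$. A direct check gives $\{x:H_D(x)>\lambda\}=\bigcup_{\mu>\lambda}D_{\mu}$, an increasing union as $\mu\downarrow\lambda$, so by continuity of measure from below
\[ H_{D*}(MA(\phi))\big((\lambda,\infty)\big)=MA(\phi)\Big(\bigcup_{\mu>\lambda}D_{\mu}\Big)=\lim_{\mu\downarrow\lambda}\int_{D_{\mu}}MA(\phi). \]
By \eqref{eq:volume} one has $\int_{D_{\mu}}MA(\phi)=\vol(L-\mu D)$ for $\mu\in(0,\lambda_{\max})$ (equivalently, this is Corollary \ref{cor:equilibriumvolume} applied to $\psi_{\mu}=\mu\ln|s_D|^2$, whose multiplier ideal is $\mathcal O(-\lfloor k\mu\rfloor D)$), while $\int_{D_{\mu}}MA(\phi)=\vol(L)$ for $\mu\le 0$ and $\int_{D_{\mu}}MA(\phi)=0$ for $\mu\geq\lambda_{\max}$.

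For the right hand side, since $\Delta(X,L)\subset\mathbb R^n_{\geq0}$ we have $\Delta(X,L)\cap\{y_1>\lambda\}=\bigcup_{\mu>\lambda}(\Delta(X,L)\cap\{y_1\geq\mu\})$, so continuity of Lebesgue measure gives
\[ p_{1*}(d\sigma|_{\Delta(X,L)})\big((\lambda,\infty)\big)=\lim_{\mu\downarrow\lambda}\sigma\big(\Delta(X,L)\cap\{y_1\geq\mu\}\big). \]
Here the input is the slicing theory of Okounkov bodies with respect to a flag whose divisorial part is $D$: for $\mu>0$ the slice $\Delta(X,L)\cap\{y_1\geq\mu\}$ is, up to translating the first coordinate, the Okounkov body of the graded linear series $k\mapsto H^0(L^k\otimes\mathcal O(-\lceil k\mu\rceil D))$ of sections vanishing along $D$ to order at least $\lceil k\mu\rceil$, so its Lebesgue measure equals $\lim_k k^{-n}h^0(L^k\otimes\mathcal O(-\lceil k\mu\rceil D))$. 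Under the standing hypothesis $D\not\subseteq\mathbb B_+(L-\mu D)$ this limit is $\vol(L-\mu D)$ (this is where that hypothesis is used, and it matches the value computed in \eqref{eq:volume} after noting $\lceil k\mu\rceil$ and $\lfloor k\mu\rfloor$ differ by less than one). For $\mu\le 0$ the slice is all of $\Delta(X,L)$, and for $\mu\geq\lambda_{\max}$ it has Lebesgue measure zero. Comparing with the previous display, $p_{1*}(d\sigma|_{\Delta(X,L)})\big((\lambda,\infty)\big)=\lim_{\mu\downarrow\lambda}\vol(L-\mu D)=H_{D*}(MA(\phi))\big((\lambda,\infty)\big)$ for every $\lambda$, which completes the proof.

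I expect the only substantive point to be the Okounkov-body slicing identity $\sigma(\Delta(X,L)\cap\{y_1\geq\mu\})=\vol(L-\mu D)$, including the check that $D\not\subseteq\mathbb B_+(L-\mu D)$ is exactly what is needed to pass from the asymptotic dimension of the $D$-vanishing sub-linear series to $\vol(L-\mu D)$; the remainder is an unwinding of the definitions together with \eqref{eq:volume}. Two minor points also deserve care: verifying the set-theoretic identity $\{H_D>\lambda\}=\bigcup_{\mu>\lambda}D_{\mu}$ together with the conventions at $\lambda\le 0$ and $\lambda\geq\lambda_{\max}$, and observing that the argument needs no continuity of the volume function, since both sides are expressed as the same right-hand limit of $\mu\mapsto\vol(L-\mu D)$.
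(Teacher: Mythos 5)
Your proposal is correct and follows essentially the same route as the paper: both identify the measures on the rays $(\lambda,\infty)$ using the volume identity \eqref{eq:volume} (equivalently Corollary \ref{cor:equilibriumvolume}) for the equilibrium sets $D_\mu$ together with the Lazarsfeld--Musta\c{t}\u{a} slicing of the Okounkov body along the divisorial part of the flag, under the standing hypothesis on $\mathbb B_+(L-\mu D)$. If anything your endgame is slightly tighter, since you compute both measures of each ray exactly as $\lim_{\mu\downarrow\lambda}\vol(L-\mu D)$ via $\{H_D>\lambda\}=\bigcup_{\mu>\lambda}D_\mu$ and continuity from below, whereas the paper records only the one-sided inequality on rays and then appeals to equality of the total masses.
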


  \begin{proof}[Proof of \ref{thm:pushforwarddivisor}]
As we will see, this result follows rather easily from our knowledge of the volume of the equilibrium sets \eqref{eq:volume}.    For
    $\lambda\in \mathbb Q$ let $U_{\lambda} = (\lambda,\infty)$. First
    observe that if $\lambda<0$ (resp.\ $\lambda>\lambda_{\max}$) then
    $p_1^{-1}(U_{\lambda}) = \Delta(X,L)$ (resp.\ is empty) and
    $H^{-1}(U_{\lambda})=X$ (resp.\ is empty) and so both
    measures in question are concentrated on the interval
    $[0,\lambda_{\max}]$.  So now let $\lambda\in
    (0,\lambda_{\max})\cap \mathbb Q$.  Then by construction
    $D_{\lambda}\subset H^{-1} (U_\lambda)$ and so
    \[\vol(U_{\lambda}, H_* MA(\phi)) \ge \vol(D_{\lambda}, MA(\phi))
    = \vol(L-\lambda D).\] On the other hand $p_1^{-1}(U_{\lambda})
    \cap \Delta(X,L))$ is (a translate of) the Okounkov body
    of $X$ taken with respect to $L-\lambda D$ \cite[4.24]{Lazarsfeld}. Thus
    \[\vol(U_\lambda,H_*MA(\phi)) \ge \vol(L-\lambda D)=\vol(
    p_1^{-1}(U_{\lambda}),d\sigma).\] Since this holds for rational
    $\lambda$, by continuity it holds for all $\lambda\in
    (0,\lambda_{\max}$).  But the total mass of the two measures in
    question is equal to $\vol(L)$, and thus since they are both
    positive measure they must be equal.
  \end{proof}

For our second result along these lines notice that by construction $|\Delta_k| = h^0(L^k)$.   Moreover the points in $\Delta_k$ determine a filtration of $H^0(L^k)$ obtained by the valuation, namely for $\alpha\in \Delta_k$ set $\mathcal F_{\alpha} = \{ s\in H^0(L^k) : \nu(s) \ge k\alpha\}$ where the inequality is taken in the lexicographic order.    Thus using the $L^2$-inner product on $H^0(L^k)$ we see there is a unique $L^2$-orthonormal basis $\{s_{\alpha}\}$ for $\alpha\in \Delta_k$ for $H^0(L^k)$ with the property 
\[ \nu(s_{\alpha}) = \alpha k \quad \text{ for } \quad \alpha\in \Delta_k\]
\begin{remark}
In the toric case the Okounkov body is nothing other than the usual Delzant polytope and $k\Delta_k$ is precisely the integral points in $k\Delta$ and one can pick torus invariant sections to achieve the same result.  Thus what we are doing here can be thought of as a generalisation of the usual toric picture in which the torus action has be replaced with the data of a divisor $D$ in $X$ and a hermitian metric $\phi$ on $L$.  
\end{remark}

\begin{theorem}\label{theorem:gradientmapfromsections2}
  We have
\[H_D = \limsup_{k\to \infty} \frac{\sum_{\alpha} \alpha_1 |s_{\alpha}|_\phi^2}{\sum_{\alpha} k|s_{\alpha}|_\phi^2}\quad \text{where } \alpha_1 = p_1(\alpha).\]
almost everywhere on $X$
\end{theorem}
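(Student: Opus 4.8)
The plan is to rewrite the right‑hand quotient in terms of the partial Bergman functions $B_k(\phi,\psi_\lambda)$ of Section~4 and to recognise the result as a Riemann sum for $\int_0^\infty \mathbf 1_{\{x\in D_\lambda\}}\,d\lambda$, which equals $H_D(x)$ because the $D_\lambda$ are nested and increasing. Write $\nu_\alpha:=\ord_D(s_\alpha)=k\alpha_1\in\mathbb N$, so the quotient in the statement is $R_k:=\big(\sum_\alpha \nu_\alpha|s_\alpha|_\phi^2\big)\big/\big(k\sum_\alpha |s_\alpha|_\phi^2\big)$. Since $D$ is a smooth divisor, $\mathcal I(k\psi_{j/k})=\mathcal I(j\ln|s_D|^2)=\mathcal O(-jD)$ for every integer $j\ge0$, so compatibility of $\{s_\alpha\}$ with the order‑of‑vanishing filtration gives $B_k(\phi,\psi_{j/k})=\sum_{\nu_\alpha\ge j}|s_\alpha|_\phi^2$ and $B_k(\phi)=\sum_\alpha|s_\alpha|_\phi^2$. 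Using $\nu_\alpha=\sum_{j\ge1}\mathbf 1_{\{\nu_\alpha\ge j\}}$ (Abel summation),
\[ R_k(x)=\frac1k\sum_{j\ge1}\frac{B_k(\phi,\psi_{j/k})(x)}{B_k(\phi)(x)}=\int_0^C g_k(t,x)\,dt,\qquad g_k(t,x):=\frac{B_k(\phi,\psi_{\lceil tk\rceil/k})(x)}{B_k(\phi)(x)}\in[0,1],\]
where $C$ is any fixed constant exceeding $\sup_{\Delta(X,L)}p_1$; the sum is finite because no section of $L^k$ vanishes along $D$ to order exceeding $Ck$.

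Next I would pin down the pointwise limit of the integrand. Since $\phi$ is positive, $MA(\phi)>0$ almost everywhere; fix such an $x$, lying off $D$, with $H_D(x)\le\lambda_{\max}$ (true a.e.\ since $D_t$ is $MA(\phi)$‑null for $t>\lambda_{\max}$ by \eqref{eq:volume}), and at which the convergence of Theorem~\ref{thm:bergmanmeasurelimit} holds both for $B_k(\phi)$ and for $B_k(\phi,\psi_{t'})$ for every rational $t'\in(0,\lambda_{\max})$. For $t>H_D(x)$, choose a rational $t'\in(H_D(x),t)$; then $\phi_{t'}(x)<\phi(x)$, the uniform bound of Proposition~\ref{proposition:bergmanboundtame} applies ($\psi_\lambda$ is tame by Lemma~\ref{prop:exponentialimpliestame}), and since $\lambda\mapsto\phi_\lambda$ is decreasing while $\lceil tk\rceil/k\ge t>t'$ we get $B_k(\phi,\psi_{\lceil tk\rceil/k})(x)\le C_x k^n e^{-k(\phi(x)-\phi_{t'}(x))}$, whereas $B_k(\phi)(x)\ge c_x k^n$ eventually (by Theorem~\ref{thm:bergmanmeasurelimit}), so $g_k(t,x)\to0$. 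For $t<H_D(x)$, choose a rational $t''\in(t,H_D(x))$, so $x\in D_{t''}=D(\phi,\psi_{t''})$ and $L-t''D$ is big; for $k$ large $\lceil tk\rceil/k<t''$, so monotonicity of the partial Bergman functions in $\lambda$ gives $g_k(t,x)\ge B_k(\phi,\psi_{t''})(x)/B_k(\phi)(x)$, and Theorem~\ref{thm:bergmanmeasurelimit} makes $k^{-n}B_k(\phi,\psi_{t''})(x)\,dV_x$ and $k^{-n}B_k(\phi)(x)\,dV_x$ both converge to $MA(\phi)_x>0$; since $g_k\le1$ this forces $g_k(t,x)\to1$. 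Hence $g_k(t,x)\to\mathbf 1_{\{x\in D_t\}}$ for almost every pair $(t,x)$.

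Finally, for almost every $x$ the functions $t\mapsto g_k(t,x)$ are dominated by $\mathbf 1_{[0,C]}$ and converge a.e.\ to $t\mapsto\mathbf 1_{\{x\in D_t\}}$, so dominated convergence gives $R_k(x)=\int_0^C g_k(t,x)\,dt\to\int_0^C\mathbf 1_{\{x\in D_t\}}\,dt=\sup\{t>0:x\in D_t\}=H_D(x)$, and in particular $\limsup_k R_k=H_D$ almost everywhere. I expect the real difficulty to be exactly the step around which the argument is built, namely interchanging $\lim_k$ with the sum over $j$ (equivalently with the $t$‑integral): this requires simultaneously the \emph{uniform} exponential decay of the partial Bergman functions away from the equilibrium sets, to kill the terms with $j/k>H_D(x)$, and their \emph{sharp} $k^n$‑asymptotics on the equilibrium sets, to evaluate those with $j/k<H_D(x)$, and then the measure‑theoretic bookkeeping to upgrade ``for each fixed $\lambda$, almost every $x$'' into ``almost every $x$, almost every $\lambda$,'' absorbing the $k$‑dependent truncation index $\lceil tk\rceil$ via the monotonicity of $\lambda\mapsto\psi_\lambda$.
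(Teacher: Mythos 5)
Your proof is correct, but the second half runs along a genuinely different track from the paper's. The common part is the Abel-summation decomposition of the numerator into the partial Bergman functions $B_k(\phi,\psi_{j/k})=\sum_{\nu_\alpha\ge j}|s_\alpha|_\phi^2$ and the use of the tame upper bound (Proposition~\ref{proposition:bergmanboundtame}) to kill the terms with $j/k>H_D(x)$; this is exactly how the paper gets $\limsup_k R_k\le H_D$ pointwise. For the reverse inequality, however, the paper never invokes pointwise lower bounds on the partial Bergman functions: it instead observes that $\int_X f_k\,MA(\phi)$ is a Riemann sum converging to $\int_\Delta x_1\,d\sigma$, identifies this with $\int_X H_D\,MA(\phi)$ via the pushforward Theorem~\ref{thm:pushforwarddivisor} (which rests on Corollary~\ref{cor:equilibriumvolume} and \eqref{eq:volume}), and concludes $f=H_D$ a.e.\ by Fatou. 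You instead go pointwise: using the a.e.\ asymptotics of Theorem~\ref{thm:bergmanmeasurelimit} for a countable dense family of rational slopes, monotonicity of $\lambda\mapsto B_k(\phi,\psi_\lambda)$ to absorb the $k$-dependent index $\lceil tk\rceil/k$, and dominated convergence in $t$. Both routes ultimately rest on the same analytic input (the lower bound of Lemma~\ref{lemma:lowerbergman} feeding into Theorem~\ref{thm:bergmanmeasurelimit}); the paper uses only its integrated consequence, which makes the measure-theoretic bookkeeping lighter, while your argument bypasses the Okounkov-body pushforward theorem altogether and delivers a slightly stronger conclusion, namely that the genuine limit (not merely the $\limsup$) exists and equals $H_D$ almost everywhere. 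One small point to make explicit: your steps where the denominator is bounded below by $c_xk^n$ and where the ratio tends to $1$ use $MA(\phi)_x>0$ a.e., i.e.\ effectively that $\phi$ is strictly positive; this is not a real restriction, since the paper's own proof uses the standard uniform Bergman asymptotics (and implicitly $dV=MA(\phi)$), which require the same hypothesis.
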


The proof of this will uses the connection between the partial Bergman function and the maximal envelopes.   For fixed rational $\lambda$, we consider the partial Bergman kernel
\[ B_{\lambda, k} = B(\lambda\psi,\phi)= \sum_{\beta} |s_{\beta}|_{\phi}^2 \quad \text{for }k\lambda \in \mathbb N\]
where $\{s_{\beta}\}$ is any $L^2$-orthonormal basis for the subspace $H^0(\mathcal I_D^{\lambda k}L^k)$.  
Since this definition is independent of basis chosen, in terms of the notation above it is thus given by
\[B_{\lambda,k} = \sum_{\alpha_1\ge \lambda} |s_{\alpha}|_{\phi}^2.\]
(the sum being understood as over all $\alpha\in \Delta_k$ whose first coordinate is at least $\lambda$).

\begin{proof}[Proof of \ref{theorem:gradientmapfromsections}]
  By the standard asymptotic of the Bergman function, 
  $k^{-n}\sum_{\alpha} |s_{\alpha}|_{\phi}^2$ tends to $1$ uniformly
  on $X$ as $k$ tends to infinity.  Thus if we let
  \begin{eqnarray*}
f_k &=& k^{-n} \sum_{\alpha} \alpha_1 |s_{\alpha}|_\phi^2
\end{eqnarray*}
and
\[ f:=\limsup_k f_k\]
it becomes sufficient to prove that $f= H_D$
almost everywhere on $X$.      To this end, fix $x\in X$ and some rational $\lambda'>H_D(x)$.   Then if $\lambda'\ge \lambda$ we have $B_{\lambda,k}\le C_k e^{\phi(x) - \phi_{\lambda}(x)} \le C_k e^{\phi(x) - \phi_{\lambda'}(x)}$.  Thus
\begin{eqnarray*}
f_k(x) &=&k^{-n}\sum_{\lambda\in p_1(\Delta_k)} B_{\lambda,k}(x)\\
&=& k^{-n} \sum_{\lambda\le \lambda'} B_{\lambda,k}(x) + k^{-n} \sum_{\lambda\ge \lambda'} B_{\lambda,k}(x)\\
&=& \le C_k \lambda' + Ck^n e^{k(\phi(x) - \phi_{\lambda'}(x))}.
\end{eqnarray*}
where $C_k$ is a sequence of constants that tends to $1$ as $k$ tends to infinity.  Now since $\lambda'>H_D(x)$ we have $\phi_{\lambda'}(x)<\phi(x)$.  Thus taking the limsup yields $f(x)\le \lambda'$ and letting $\lambda'$ tend to $H_D(x)$ we deduce $f(x)\le H_D(x)$ for all $x\in X$.

Now as each $s_{\alpha}$ has unit $L^2$-norm we clearly have
\[\int_X f_k MA(\phi) = \sum_{\alpha\in \Delta_k} \alpha_1\to \int_{\Delta} x_1 d\sigma\]
where $d\sigma$ is the Lebesgue measure and $x_1$ is the first
coordinate.  But from the pushforward property (Theorem
\ref{thm:pushforwarddivisor}) this last integral is equal to $\int_X
H_{D} MA(\phi)$.  Thus from Fatou's Lemma,
\[\int_X f MA(\phi) = \int_X \limsup f_k MA(\phi) \ge \lim_k \int_X f_k MA(\phi) = \int_X H_D MA(\phi).\]
Hence we must in fact have $\int_X f MA(\phi) = \int_X H_D MA(\phi)$, and
thus $f=H_D$ almost everywhere on $X$ as required.
\end{proof}

\noindent {\sc Julius Ross,  University of Cambridge, UK. \\j.ross@dpmms.cam.ac.uk}\vspace{2mm}\\ 
\noindent{\sc David Witt Nystrom,  University of Gothenburg, Sweden. \\\quad wittnyst@chalmers.se}

\end{document}